\newtheorem{assumption}{\hspace{0pt}\bf Assumption}
\begin{document}


\jmlrheading{1}{2000}{1-48}{4/00}{10/00}{Aryan Mokhtari and Alejandro Ribeiro}


\ShortHeadings{Global Convergence of Online Limited Memory BFGS}{Mokhtari and Ribeiro}
\firstpageno{1}



\title{Global Convergence of Online Limited Memory BFGS}

\author{\name Aryan Mokhtari  \email aryanm@seas.upenn.edu \\
       \name Alejandro Ribeiro \email aribeiro@seas.upenn.edu  \\
       \addr Department of Electrical and Systems Engineering\\
       University of Pennsylvania\\
       Philadelphia, PA 19104, USA}

\editor{}

\maketitle


\thispagestyle{empty}

\begin{abstract}

Global convergence of an online (stochastic) limited memory version of the Broyden-Fletcher-Goldfarb-Shanno (BFGS) quasi-Newton method for solving optimization problems with stochastic objectives that arise in large scale machine learning is established. Lower and upper bounds on the Hessian eigenvalues of the sample functions are shown to suffice to guarantee that the curvature approximation matrices have bounded determinants and traces, which, in turn, permits establishing convergence to optimal arguments with probability 1. Numerical experiments on support vector machines with synthetic data showcase reductions in convergence time relative to stochastic gradient descent algorithms as well as reductions in storage and computation relative to other online quasi-Newton methods. Experimental evaluation on a search engine advertising problem corroborates that these advantages also manifest in practical applications.

\end{abstract}

\begin{keywords}
Quasi-Newton methods, large-scale optimization, stochastic optimization, support vector machines.
\end{keywords}


\section{Introduction}\label{sec_Introduction}

Many problems in Machine Learning can be reduced to the minimization of a stochastic objective defined as an expectation over a set of random functions (\cite{BottouCun, Bottou, SS, cMokhtariRibeiro14}).
Specifically, consider an optimization variable $\bbw \in \reals^{n}$ and a random variable $\bbtheta \in \Theta\subseteq\reals^p$ that determines the choice of a function $f(\bbw,{\bbtheta}):\reals^{n\times p}\to\reals$. Stochastic optimization problems entail determination of the argument $\bbw^*$ that minimizes the expected value $F(\bbw):=\mbE_{\bbtheta}[f(\bbw,{\bbtheta})]$,
\begin{equation}\label{optimization_problem}
   \bbw^* \ :=\ \argmin_\bbw \mbE_{\bbtheta}[f(\bbw,{\bbtheta})]
          \ :=\ \argmin_\bbw {F(\bbw)}.
\end{equation} 
We refer to $f(\bbw,{\bbtheta})$ as the random or instantaneous functions and to $F(\bbw):=\mbE_{\bbtheta}[f(\bbw,{\bbtheta})]$ as the average function. A canonical class of problems having this form are support vector machines (SVMs) that reduce binary classification to the determination of a hyperplane that separates points in a given training set; see, e.g., (\cite{Vapnik, Bottou, BGV}). In that case $\bbtheta$ denotes individual training samples, $f(\bbw,{\bbtheta})$ the loss of choosing the hyperplane defined by $\bbw$, and $F(\bbw):=\mbE_{\bbtheta}[f(\bbw,{\bbtheta})]$ the mean loss across all elements of the training set. The optimal argument $\bbw^*$ is the optimal linear classifier. 

Numerical evaluation of objective function gradients $\nabla_{\bbw} F(\bbw)=\mbE_{\bbtheta}[\nabla_{\bbw} f(\bbw,{\bbtheta})]$ is intractable when the cardinality of $\Theta$ is large, as is the case, e.g., when SVMs are trained on large sets. This motivates the use of algorithms relying on stochastic gradients that provide gradient estimates based on small data subsamples. For the purpose of this paper stochastic optimization algorithms can be divided into three categories: Stochastic gradient descent (SGD) and related first order methods, stochastic Newton methods, and stochastic quasi-Newton methods.

SGD is the most popular method used to solve stochastic optimization problems (\cite{Bottou,Shwartz,Zhang,Roux}). However, as we consider problems of ever larger dimension their slow convergence times have limited their practical appeal and fostered the search for alternatives. In this regard, it has to be noted that SGD is slow because of both, the use of gradients as descent directions and their replacement by random estimates. Several alternatives have been proposed to deal with randomness in an effort to render the convergence times of SGD closer to the faster convergence times of gradient descent (\cite{Ruszczynski, Konency, Mahdavi}). These SGD variants succeed in reducing randomness and end up exhibiting the asymptotic convergence rate of gradient descent. Although they improve asymptotic convergence rates, the latter methods are still often slow in practice. This is not unexpected. Reducing randomness is of no use when the function $F(\bbw)$ has a challenging curvature profile. In these ill-conditioned functions SGD is limited by the already slow convergence times of deterministic gradient descent. The golden standard to deal with ill-conditioned functions in a deterministic setting is Newton's method. However, unbiased stochastic estimates of Newton steps can't be computed in general. This fact limits the application of stochastic Newton methods to problems with specific structure (\cite{Birge, ZarghamEtal14}).

If SGD is slow to converge and stochastic Newton can't be used in general, the remaining alternative is to modify deterministic quasi-Newton methods that speed up convergence times relative to gradient descent without using Hessian evaluations (\cite{Dennis,Powell,Byrd,Nocedal}). This has resulted in the development of the stochastic quasi-Newton methods known as online (o) Broyden-Fletcher-Goldfarb-Shanno (BFGS) (\cite{Schraudolph}), regularized stochastic BFGS (RES) (\cite{AryanAleTSP}), and online limited memory (oL)BFGS  (\cite{Schraudolph}) which occupy the middle ground of broad applicability irrespective of problem structure and conditioning. All three of these algorithms extend BFGS by using stochastic gradients both as descent directions and constituents of Hessian estimates. The oBFGS algorithm is a direct generalization of BFGS that uses stochastic gradients in lieu of deterministic gradients. RES differs in that it further modifies BFGS to yield an algorithm that retains its convergence advantages while improving theoretical convergence guarantees and numerical behavior. The oLBFGS method uses a modification of BFGS to reduce the computational cost of each iteration.

An important observation here is that in trying to adapt to the changing curvature of the objective, stochastic quasi-Newton methods may end up exacerbating the problem. Indeed, since Hessian estimates are stochastic, it is possible to end up with almost singular Hessian estimates. The corresponding small eigenvalues then result in a catastrophic amplification of the noise which nullifies progress made towards convergence. This is not a minor problem. In oBFGS this possibility precludes convergence analyses (\cite{Bordes, Schraudolph}) and may result in erratic numerical behavior (\cite{AryanAleTSP}). As a matter of fact, the main motivation for the introduction of RES is to avoid this catastrophic noise amplification so as to retain smaller convergence times while ensuring that optimal arguments are found with probability 1 (\cite{AryanAleTSP}). However valuable, the convergence guarantees of RES and the convergence time advantages of oBFGS and RES are tainted by an iteration cost of order $O(n^2)$ and $O(n^3)$, respectively, which precludes their use in problems where $n$ is very large. In deterministic settings this problem is addressed by limited memory (L)BFGS (\cite{DingNocedal}) which can be easily generalized to develop the oLBFGS algorithm (\cite{Schraudolph}). Numerical tests of oLBFGS are promising but theoretical convergence characterizations are still lacking. The main contribution of this paper is to show that oLBFGS converges with probability 1 to optimal arguments across realizations of the random variables $\theta$. This is the same convergence guarantee provided for RES and is in marked contrast with oBFGS, which fails to converge if not properly regularized. Convergence guarantees for oLBFGS do not require such measures.

We begin the paper with brief discussions of deterministic BFGS (Section \ref{sec:problem}) and  LBFGS (Section \ref{sec_limmem_bfgs}) and the introduction of oLBFGS (Section \ref{sec_lim_mem_stochastic_bfgs}). The fundamental idea in BFGS and oLBFGS is to continuously satisfy a secant condition while staying close to previous curvature estimates. They differ in that BFGS uses all past gradients to estimate curvature while oLBFGS uses a fixed moving window of past gradients. The use of this window reduces memory and computational cost (Appendix \ref{apx_lbfgs_reduced_cost}). The difference between LBFGS and oLBFGS is the use of stochastic gradients in lieu of their deterministic counterparts.

Convergence properties of oLBFGS are then analyzed (Section \ref{sec_convergence}). Under the assumption that the sample functions $f(\bbw,{\bbtheta})$ are strongly convex we show that the trace and determinant of the Hessian approximations computed by oLBFGS are upper and lower bounded, respectively {(Lemma \ref{lemma_determinant_and_trace_bounds}). These bounds are then used to limit the range of variation of the ratio between the Hessian approximations' largest and smallest eigenvalues (Lemma \ref{norooz}). In turn, this condition number limit is shown to be sufficient to prove convergence to the optimal argument $\bbw^*$ with probability 1 over realizations of the sample functions (Theorem \ref{convg}). This is an important result because it ensures that oLBFGS doesn't suffer from the numerical problems that hinder oBFGS. We complement this almost sure convergence result with a characterization of the convergence rate which is shown to be at least $O(1/t)$ in expectation (Theorem \ref{theo_convergence_rate}). It is fair to emphasize that, different from the deterministic case, the convergence rate of oLBFGS is not better than the convergence rate of SGD. This is not a limitation of our analysis. The difference between stochastic and regular gradients introduces a noise term that dominates convergence once we are close to the optimum, which is where superlinear convergence rates manifest. In fact, the same convergence rate would be observed if exact Hessians were available. The best that can be proven of oLBFGS is that the convergence rate is not worse than that of SGD. Given that theoretical guarantees only state that the curvature correction does not exacerbate the problem's condition it is perhaps fairer to describe oLBFGS as an adaptive reconditioning strategy instead of a stochastic quasi-Newton method. The latter description refers to the genesis of the algorithm. The former is a more accurate description of its actual behavior.

To show the advantage of using oLBFGS as an adaptive reconditioning strategy we develop its application to SVM  problems (Section \ref{sec:SVMproblem}) and perform a comparative numerical analysis with synthetic data. The conclusions of this numerical analysis are that oLBFGS performs as well as oBFGS and RES while outperforming SGD when convergence is measured with respect to the number of feature vectors processed. In terms of computation time, oLBFGS outperforms all three methods, SGD, oBFGS, and RES. The advantages of oLBFGS grow with the dimension of the feature vector and can be made arbitrarily large (Section \ref{sec:SVM2}). To further substantiate numerical claims we use oLBFGS to train a logistic regressor to predict the click through rate in a search engine advertising problem (Section \ref{sec:ad_application}). The logistic regression uses a heterogeneous feature vector with 174,026 binary entries that describe the user, the search, and the advertisement (Section \ref{sec:dataset}). Being a large scale problem with heterogeneous data, the condition number of the logistic log likelihood objective is large and we expect to see significant advantages of oLBFGS relative to SGD. This expectation is fulfilled. The oLBFGS algorithm trains the regressor using less than $1\%$ of the data required by SGD to obtain similar classification accuracy. (Section \ref{sec_ad_numerical_results}). We close the paper with concluding remarks (Section \ref{sec_conclusions}).

\medskip\noindent{\bf Notation\quad} Lowercase boldface $\bbv$ denotes a vector and uppercase boldface $\bbA$ a matrix. We use $\|\bbv\|$ to denote the Euclidean norm of vector $\bbv$ and $\|\bbA\|$ to denote the Euclidean norm of matrix $\bbA$. The trace of $\bbA$ is written as $\tr(\bbA)$ and the determinant as $\det(\bbA)$. We use $\bbI$ for the identity matrix of appropriate dimension. The notation $\bbA\succeq\bbB$ implies that the matrix $\bbA- \bbB$ is positive semidefinite. The operator $\mbE_{\bbx}[\cdot]$ stands in for expectation over random variable $\bbx$ and  $\mbE[\cdot]$ for expectation with respect to the distribution of a stochastic process.

%
\section{Algorithm definition}\label{sec:problem}

Recall the definitions of the sample functions $f(\bbw,\bbtheta)$ and the average function $F(\bbw):=\mbE_{\bbtheta}[f(\bbw,{\bbtheta})]$. We assume the sample functions $f(\bbw,\bbtheta)$ are strongly convex for all $\bbtheta$. This implies the objective function $F(\bbw):=\mbE_{\bbtheta}[f(\bbw,{\bbtheta})]$, being an average of the strongly convex sample functions, is also strongly convex. We define the gradient $\bbs(\bbw) := \nabla F(\bbw)$  of the average function $F(\bbw)$ and assume that it can be computed as 
\begin{align}\label{gradient}
    \bbs(\bbw) := \nabla F(\bbw) 
                  = \mbE_{\bbtheta}[\nabla f(\bbw,{\bbtheta})].
\end{align}
Since the function $F(\bbw)$ is strongly convex, gradients $\bbs(\bbw)$ are descent directions that can be used to find the optimal argument $\bbw^{*}$ in \eqref{optimization_problem}. Introduce then a time index $t$, a step size $\eps_t$, and a positive definite matrix $\bbB_t^{-1}\succ0$ to define a generic descent algorithm through the iteration
\begin{equation}\label{eqn_bfgs_descent}
   \bbw_{t+1} \ =\ \bbw_{t}-\epsilon_{t}\ \bbB_t^{-1}\bbs(\bbw_t) 
              \ =\ \bbw_{t}-\epsilon_{t}\ \bbd_t.
\end{equation}
where we have also defined the descent step $\bbd_t=\bbB_t^{-1}\bbs(\bbw_t)$. When $\bbB_t^{-1}=\bbI$ is the identity matrix, \eqref{eqn_bfgs_descent} reduces to gradient descent. When $\bbB_t=\bbH(\bbw_t):=\nabla^2 F(\bbw_t)$ is the Hessian of the objective function, \eqref{eqn_bfgs_descent} defines Newton's algorithm. In this paper we focus on quasi-Newton methods whereby we attempt to select matrices $\bbB_t$ close to the Hessian $\bbH(\bbw_t)$. Various methods are known to select matrices $\bbB_t$, including those by Broyden e.g., \cite{Broyden}; {Davidon, Fletcher, and Powell (DFP) e.g., \cite{DFP}; and Broyden, Fletcher, Goldfarb, and Shanno (BFGS) e.g., \cite{Byrd,Powell}. We work with the matrices $\bbB_t$ used in BFGS since they have been observed to work best in practice (see \cite{Byrd}). 

In BFGS, the function's curvature $\bbB_t$ is approximated by a finite difference. Let $\bbv_{t}$ denote the variable variation at time $t$ and $\bbr_{t}$ the gradient variation at time $t$ which are respectively defined as
\begin{equation}\label{ball}
      \bbv_{t} := \bbw_{t+1}-\bbw_{t}, \qquad
      \bbr_{t}  := \bbs(\bbw_{t+1})-\bbs(\bbw_t).
\end{equation}
We select the matrix $\bbB_{t+1}$ to be used in the next time step so that it satisfies the secant condition $ \bbB_{t+1} \bbv_{t} = \bbr_{t}$. The rationale for this selection is that {the Hessian $\bbH(\bbw_t)$ satisfies this condition for $\bbw_{t+1}$ tending to $\bbw_{t}$}. Notice however that the secant condition $ \bbB_{t+1} \bbv_{t} = \bbr_{t}$ is not enough to completely specify $\bbB_{t+1}$. 
To resolve this indeterminacy, matrices $\bbB_{t+1}$ in BFGS are also required to be as close as possible to the previous Hessian approximation $\bbB_{t}$ in terms of differential entropy. 
These conditions can be resolved in closed form leading to the explicit expression -- see, e.g., \cite{Nocedal} --,
\begin{equation}\label{answer2}
   \bbB_{t+1} = \bbB_{t}
              + \frac{\bbr_{t}\bbr_{t}^{T}}
                     {\bbv_{t}^{T }\bbr_{t}}
              - \frac{\bbB_{t}\bbv_{t}\bbv_{t}^{T}\bbB_{t}}
                     {\bbv_{t}^{T} \bbB_{t}  \bbv_{t}}.
\end{equation}
While the expression in \eqref{answer2} permits updating the Hessian approximations $\bbB_{t+1}$, implementation of the descent step in \eqref{eqn_bfgs_descent} requires its inversion. This can be avoided by using the Sherman-Morrison formula in \eqref{answer2} to write
\begin{equation}\label{answer3}
   \bbB_{t+1}^{-1}\ =\ \bbZ_{t}^{T}\ \bbB_{t}^{-1}\ \bbZ_{t}
		 	           \ +\ \rho_{t}\ {\bbv_{t}\bbv_{t}^{T}},
\end{equation}
where we defined the scalar $\rho_t$ and the matrix $\bbZ_t$ as
\begin{equation}\label{parameteres}
 	 \rho_{t}\ :=\ \frac{1}{\bbv_{t}^{T}\bbr_{t}} \ ,         \qquad
	 \bbZ_{t}\ :=\ \bbI - \rho_{t} \bbr_{t} \bbv_{t}^{T}.
\end{equation}
The updates in \eqref{answer2} and \eqref{answer3} require the inner product of the gradient and variable variations to be positive, i.e., $\bbv_{t}^T\bbr_{t}>0$. This is always true if the objective $F(\bbw)$ is strongly convex and further implies that $\bbB_{t+1}^{-1}$ stays positive definite if $\bbB_{t}^{-1}\succ \bb0$, \cite{Nocedal}.

Each BFGS iteration has a cost of $O(n^{2})$ arithmetic operations. This is less than the  $O(n^{3})$ of each step in Newton's method but more than the $O(n)$ cost of each gradient descent iteration. In general, the relative convergence rates are such that the total computational cost of BFGS to achieve a target accuracy is smaller than the corresponding cost of gradient descent. Still, alternatives to reduce the computational cost of each iteration are of interest for large scale problems. Likewise, BFGS requires storage and propagation of the $O(n^{2})$ elements of $\bbB_{t}^{-1}$, whereas gradient descent requires storage of $O(n)$ gradient elements only. This motivates alternatives that have smaller memory footprints. Both of these objectives are accomplished by the limited memory (L)BFGS algorithm that we describe in the following section.

%
\subsection{LBFGS: Limited memory BFGS}\label{sec_limmem_bfgs}

As it follows from \eqref{answer3}, the updated Hessian inverse approximation $\bbB_{t}^{-1}$ depends on $\bbB_{t-1}^{-1}$ and the curvature information pairs $\{\bbv_{t-1}$, $\bbr_{t-1}\}$. In turn, to compute $\bbB_{t-1}^{-1}$, the estimate $\bbB_{t-2}^{-1}$ and the curvature pair $\{\bbv_{t-2}, \bbr_{t-2}\}$ are used. Proceeding recursively, it follows that $\bbB_{t}^{-1}$ is a function of the initial approximation $\bbB_{0}^{-1}$ and all previous $t$ curvature information pairs $\{\bbv_{u}$, $\bbr_{u}\}_{u=0}^{t-1}$. The idea in LBFGS is to restrict the use of past curvature information to the last $\tau$ pairs $\{\bbv_{u}, \bbr_{u}\}_{u=t-\tau}^{t-1}$. Since earlier iterates $\{\bbv_{u}, \bbr_{u}\}$ with $u<t-\tau$ are likely to carry little information about the curvature at the current iterate $\bbw_t$, this restriction is expected to result in a minimal performance penalty.

For a precise definition, pick a positive definite matrix $\bbB_{t,0}^{-1}$ as the initial Hessian inverse approximation at step $t$. Proceed then to perform $\tau$ updates of the form in \eqref{answer3} using the last $\tau$ curvature information pairs $\{\bbv_{u},\bbr_{u}\}_{u=t-\tau}^{t-1}$. Denoting as $\bbB_{t,u}^{-1}$ the curvature approximation after $u$ updates are performed we have that the refined matrix approximation $\bbB_{t,u+1}^{-1}$ is given by [cf. \eqref{answer3}]
\begin{align}\label{LBFGS_update}
   \bbB_{t,u+1}^{-1} \
        = \  & \bbZ_{t-\tau+u}^{T}\ \bbB_{t,u}^{-1}\   \bbZ_{t-\tau+u} 
		 	  \ +\  \rho_{t-\tau+u}\ {\bbv_{t-\tau+u}\ \! \bbv_{t-\tau+u}^{T}},
\end{align}
where $u=0,\dots,\tau-1$ and the constants $\rho_{t-\tau+u}$ and rank-one plus identity matrices $\bbZ_{t-\tau+u}$ are as given in \eqref{parameteres}. The inverse Hessian approximation $\bbB_{t}^{-1}$ to be used in \eqref{eqn_bfgs_descent} is the one yielded after completing the $\tau$ updates in \eqref{LBFGS_update}, i.e., $\bbB_{t}^{-1}=\bbB_{t,\tau}^{-1}$. Observe that when $t<\tau$ there are not enough pairs $\{\bbv_{u},\bbr_{u}\}$ to perform $\tau$ updates. In such case we just redefine $\tau=t$ and proceed to use the $t=\tau$ available pairs $\{\bbv_{u},\bbr_{u}\}_{u=0}^{t-1}$ .

Implementation of the product $\bbB_t^{-1}\bbs(\bbw_t)$ in \eqref{eqn_bfgs_descent} for matrices $\bbB_{t}^{-1}=\bbB_{t,\tau}^{-1}$ obtained from the recursion in \eqref{LBFGS_update} does not need explicit computation of the matrix $\bbB_{t,\tau}^{-1}$. Although the details are not straightforward, observe that each iteration in \eqref{LBFGS_update} is similar to a rank-one update and that as such it is not unreasonable to expect that the product $\bbB_t^{-1}\bbs(\bbw_t)=\bbB_{t,\tau}^{-1}\bbs(\bbw_t)$ can be computed using $\tau$ recursive inner products. Assuming that this is possible, the implementation of the recursion in \eqref{LBFGS_update} doesn't need computation and storage of prior matrices $\bbB_{t-1}^{-1}$. Rather, it suffices to keep the $\tau$ most recent curvature information pairs $\{\bbv_{u},\bbr_{u}\}_{u=t-\tau}^{t-1}$, thus reducing storage requirements from $O(n^2)$ to $O(\tau n)$. Furthermore, each of these inner products can be computed at a cost of $n$ operations yielding a total computational cost of $O(\tau n)$ per LBFGS iteration. Hence, LBFGS decreases both the memory requirements and the computational cost of each iteration from the $O(n^2)$ required by regular BFGS to $O(\tau n)$. We present the details of this iteration in the context of the online (stochastic) LBFGS that we introduce in the following section.

%
\subsection{Online (Stochastic) Limited memory BFGS}\label{sec_lim_mem_stochastic_bfgs}

To implement \eqref{eqn_bfgs_descent} and \eqref{LBFGS_update} we need to compute gradients $\bbs(\bbw_t)$. This is impractical when the number of functions $f(\bbw,\bbtheta)$ is large, as is the case in most stochastic problems of practical interest and motivates the use of stochastic gradients in lieu of actual gradients. Consider a given set of $L$ realizations $\tbtheta=[\bbtheta_{1};...;\bbtheta_{L}]$ and define the stochastic gradient of $F(\bbw)$ at $\bbw$ given samples $\tbtheta$ as
\begin{equation}\label{stochastic_gradient}
   \hbs(\bbw,\tbtheta) 
          := \frac{1}{L}\sum_{l=1}^{L}  \nabla f(\bbw,{\bbtheta_{l}}).
\end{equation}
In oLBFGS we use stochastic gradients $\hbs(\bbw,\tbtheta)$ for descent directions and curvature estimators. In particular, the descent iteration in \eqref{eqn_bfgs_descent} is replaced by the descent iteration 
\begin{equation}\label{eqn_olbfgs_descent}
   \bbw_{t+1}\ =\ \bbw_{t}-\epsilon_{t}\ \hbB_t^{-1}\hbs(\bbw_t,\tbtheta_t)
             \ =\ \bbw_{t} - \epsilon_{t}\hbd_{t} ,
\end{equation}
where $\tbtheta_t=[\bbtheta_{t1};...;\bbtheta_{tL}]$ is the set of samples used at step $t$ to compute the stochastic gradient $\hbs(\bbw_t,\tbtheta_t)$ as per \eqref{stochastic_gradient} and the matrix $\hbB_t^{-1}$ is a function of past stochastic gradients $\hbs(\bbw_u,\tbtheta_u)$ with $u\leq t$ instead of a function of past gradients $\bbs(\bbw_u)$ as in \eqref{eqn_bfgs_descent}. As we also did in \eqref{eqn_bfgs_descent} we have defined the stochastic step $\hbd_{t}:=\hbB_t^{-1}\hbs(\bbw_t,\tbtheta_t)$ to simplify upcoming discussions.

To properly specify $\hbB_t^{-1}$ we define the stochastic gradient variation $\hbr_{t}$ at time $t$ as the difference between the stochastic gradients $\hbs(\bbw_{t+1},\tbtheta_{t})$ and $\hbs(\bbw_{t},\tbtheta_{t})$ associated with subsequent iterates $\bbw_{t+1}$ and $\bbw_{t}$ and the {\it common} set of samples $\tbtheta_t$ [cf. \eqref{ball}],
\begin{equation}\label{chris}
   \hbr_{t} :=\hbs(\bbw_{t+1},\tbtheta_{t})-\hbs(\bbw_{t},\tbtheta_{t}).
\end{equation}
Observe that $\hbs(\bbw_{t},\tbtheta_{t})$ is the stochastic gradient used at time $t$ in  \eqref{eqn_olbfgs_descent} but that $\hbs(\bbw_{t+1},\tbtheta_{t})$ is computed solely for the purpose of determining the stochastic gradient variation. The perhaps more natural definition $\hbs(\bbw_{t+1},\tbtheta_{t+1})-\hbs(\bbw_{t},\tbtheta_{t})$ for the stochastic gradient variation, which relies on the stochastic gradient $\hbs(\bbw_{t+1},\tbtheta_{t+1})$ used at time $t+1$ in \eqref{eqn_olbfgs_descent} is not sufficient to guarantee convergence; see e.g.,(\cite{Schraudolph, AryanAleTSP}).

To define the oLBFGS algorithm we just need to provide stochastic versions of the definitions in \eqref{parameteres} and \eqref{LBFGS_update}. The scalar constants and identity plus rank-one matrices in \eqref{parameteres} are redefined to the corresponding stochastic quantities
\begin{equation}\label{Z_rho_definitions}
   \hat{\rho}_{t-\tau+u}\ =\ \frac{1}{\bbv_{t-\tau+u}^{T}\hbr_{t-\tau+u}}
   \quad \text{and}\quad
   \hbZ_{t-\tau+u}\ =\ \bbI - \hat{\rho}_{t-\tau+u} \hbr_{t-\tau+u} \bbv_{t-\tau+u}^{T},
\end{equation}
whereas the LBFGS matrix $\bbB_t^{-1}= \bbB_{t,\tau}^{-1}$ in \eqref{LBFGS_update} is replaced by the oLBFGS Hessian inverse approximation $\hbB_t^{-1}= \hbB_{t,\tau}^{-1}$ which we define as the outcome of $\tau$ recursive applications of the update,
\begin{align}\label{oLBFGS_update}
   \hbB_{t,u+1}^{-1} \
      =\ \hbZ_{t-\tau+u}^{T}\ \hbB_{t,u}^{-1}\ \hbZ_{t-\tau+u}
      + \hat{\rho}_{t-\tau+u}\ {\bbv_{t-\tau+u}\ \bbv_{t-\tau+u}^{T}},
\end{align}
where the initial matrix $\hbB_{t,0}^{-1}$ is given and the time index is $u=0,\ldots,\tau-1$. The oLBFGS algorithm is defined by the stochastic descent iteration in \eqref{eqn_olbfgs_descent} with matrices $\hbB_t^{-1}= \hbB_{t,\tau}^{-1}$ computed by $\tau$ recursive applications of \eqref{oLBFGS_update}. Except for the fact that they use stochastic variables, \eqref{eqn_olbfgs_descent} and \eqref{oLBFGS_update} are identical to \eqref{eqn_bfgs_descent} and \eqref{LBFGS_update}. Thus, as is the case in \eqref{eqn_bfgs_descent}, the Hessian inverse approximation $\hbB_{t}^{-1}$ in \eqref{oLBFGS_update} is a function of the initial Hessian inverse approximation $\bbB_{t,0}^{-1}$ and the $\tau$ most recent curvature information pairs $\{\bbv_{u},\hbr_{u}\}_{u=t-\tau}^{t-1}$. Likewise, when $t<\tau$ there are not enough pairs $\{\bbv_{u},\hbr_{u}\}$ to perform $\tau$ updates. In such case we just redefine $\tau=t$ and proceed to use the $t=\tau$ available pairs $\{\bbv_{u},\hbr_{u}\}_{u=0}^{t-1}$ . We also point out that the update in \eqref{oLBFGS_update} necessitates $\hbr_u^T\bbv_u >0$ for all time indexes $u$. This is true as long as the instantaneous functions $f(\bbw,{\bbtheta})$ are strongly convex with respect to $\bbw$ as we show in Lemma \ref{lecce}.

The equations in \eqref{eqn_olbfgs_descent} and \eqref{oLBFGS_update} are used conceptually but not in practical implementations. For the latter we exploit the structure of \eqref{oLBFGS_update} to rearrange the terms in the computation of the product $\hbB_t^{-1}\hbs(\bbw_t,\tbtheta_t)$. To see how this is done consider the recursive update for the Hessian inverse approximation $\hbB_{t}^{-1}$ in \eqref{oLBFGS_update} and make $u=\tau-1$ to write
\begin{align}\label{eqn_oLBFGS_step_computation_10}
   \hbB_{t}^{-1} 
      \ =\ \hbB_{t,\tau}^{-1}
      \ =\ \left(\hbZ_{t-1}^{T}\right) \hbB_{t,\tau-1}^{-1} \left(\hbZ_{t-1}\right)
           + \hat{\rho}_{t-1}\ {\bbv_{t-1}\ \bbv_{t-1}^{T}} .
\end{align}
Equation \eqref{eqn_oLBFGS_step_computation_10} shows the relation between the Hessian inverse approximation $\hbB_{t}^{-1} $ and the $(\tau-1 )$st updated version of the initial Hessian inverse approximation $\hbB_{t,\tau-1}^{-1}$ at step $t$. Set now $u=\tau-2$ in \eqref{oLBFGS_update} to express $\hbB_{t,\tau-1}^{-1}$ in terms of $\hbB_{t,\tau-2}^{-1}$ and substitute the result in \eqref{eqn_oLBFGS_step_computation_10} to rewrite $\hbB_{t}^{-1}$ as
\begin{align}\label{eqn_oLBFGS_step_computation_13}
   \hbB_{t}^{-1} 
      \ =\  &\left(\hbZ_{t-1}^{T}\hbZ_{t-2}^{T}\right) \hbB_{t,\tau-2}^{-1} 
             \left(\hbZ_{t-2} \hbZ_{t-1}\right) 
            +\ \hat{\rho}_{t-2}\left(\hbZ_{t-1}^{T}\right) {\bbv_{t-2}\ \bbv_{t-2}^{T}}  
                               \left(\hbZ_{t-1}\right)
            +\ \hat{\rho}_{t-1}\ {\bbv_{t-1}\ \bbv_{t-1}^{T}} .
\end{align}
We can proceed recursively by substituting $\hbB_{t,\tau-2}^{-1}$ for its expression in terms of $\hbB_{t,\tau-3}^{-1}$ and in the result substitute $\hbB_{t,\tau-3}^{-1}$ for its expression in terms of $\hbB_{t,\tau-3}^{-1}$ and so on. Observe that a new summand is added in each of these substitutions from which it follows that repeating this process $\tau$ times yields
\begin{align}\label{SLBFGS_update}
   \hbB_{t}^{-1}
       =\ &    \left(\hbZ_{t-1}^{T}\ldots\hbZ_{t-\tau}^{T}\right) 
                \hbB_{t,0}^{-1} 
                \left(\hbZ_{t-\tau}\dots\hbZ_{t-1}\right)        
            +\ \hat{ \rho}_{t-\tau} 
                \left( \hbZ_{t-1}^{T} \dots \hbZ_{t-\tau+1}^{T} \right) 
                \bbv_{t-\tau} \bbv_{t-\tau}^{T}
                \left( \hbZ_{t-\tau+1}\dots \hbZ_{t-1} \right)   \nonumber \\
           & +\ \dots 
             +\ \hat{\rho}_{t-2} \left( \hbZ_{t-1}^{T}\right) 
                \bbv_{t-2}\bbv_{t-2}^{T}
                \left( \hbZ_{t-1}\right) 
             +\ \hat{\rho}_{t-1} 
                \bbv_{t-1}\bbv_{t-1}^{T}.
\end{align}
The important observation in \eqref{SLBFGS_update} is that the matrix $\hbZ_{t-1}$ and its transpose $\hbZ_{t-1}^T$ are the first and last product terms of all summands except the last, that the matrices $\hbZ_{t-2}$ and its transpose $\hbZ_{t-2}^T$ are second and penultimate in all terms but the last two, and so on. Thus, when computing the oLBFGS step $\hbd_{t}:=\hbB_t^{-1}\hbs(\bbw_t,\tbtheta_t)$ the operations needed to compute the product with the next to last summand of \eqref{SLBFGS_update} can be reused to compute the product with the second to last summand which in turn can be reused in determining the product with the third to last summand and so on. This observation compounded with the fact that multiplications with the identity plus rank one matrices $\hbZ_{t-1}$ requires $O(n)$ operations yields an algorithm that can compute the oLBFGS step $\hbd_{t}:=\hbB_t^{-1}\hbs(\bbw_t,\tbtheta_t)$ in $O(\tau n)$  operations. We summarize the specifics of such computation in the following proposition where we consider the computation of the product $\hbB_t^{-1}\bbp$ with a given arbitrary vector $\bbp$.

%
\begin{proposition}\label{oLBFGS_efficient_descent_dircetion_computation} 
 Consider the oLBFGS Hessian inverse approximation $\hbB_t^{-1}= \hbB_{t,\tau}^{-1}$ obtained after $\tau$ recursive applications of the update in \eqref{oLBFGS_update} with the scalar sequence $\hat{\rho}_{t-\tau+u}$ and identity plus rank-one matrix sequence $\hbZ_{t-\tau+u}$ as defined in \eqref{Z_rho_definitions} for given variable and stochastic gradient variation pairs $\{\bbv_{u}, \bbr_{u}\}_{u=t-\tau}^{t-1}$. For a given vector $\bbp=\bbp_0$ define the sequence of vectors $\bbp_{k}$ through the recursion 
\begin{equation}\label{eqn_define_pt}
   \bbp_{u+1} 
      \ =\  \bbp_{u}-\alpha_{u}\hbr_{t-u-1} \quad 
      \quad \text{for\ } u=0,\ldots , \tau-1,
\end{equation}
where we also define the constants $\alpha_{u}:=\hat{\rho}_{t-u-1}\bbv_{t-u-1}^{T}\bbp_{u}$. Further define the sequence of vectors $\bbq_{k}$ with initial value $\bbq_0= \hbB_{t,0}^{-1} \bbp_{\tau}$ and subsequent elements
\begin{equation}\label{eqn_define_qt}
   \bbq_{u+1} 
      \ =\ \bbq_{u} + 
              ( \alpha_{\tau-u-1} -  \beta_{u}) \bbv_{t-\tau+u}
      \quad \text{for\ } u=0,\ldots, \tau-1,
\end{equation}
where we define constants $\beta_{u}:= \hat{\rho}_{t-\tau+u}\hbr_{t-\tau+u}^{T}\bbq_{u}$. The product $\hbB_{t}^{-1}\bbp$ equals $\bbq_{\tau}$, i.e., $\hbB_{t}^{-1}\bbp\ =\ \bbq_{\tau}$.
\end{proposition}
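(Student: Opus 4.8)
The plan is to show that the two recursions in the statement implement, factor by factor, the nested structure of the update \eqref{oLBFGS_update} (equivalently the fully expanded form \eqref{SLBFGS_update}): the first loop applies the matrices $\hbZ_{t-\tau}\cdots\hbZ_{t-1}$ on the right of $\bbp$ and records the scalar coefficients, and the second loop applies the transposes $\hbZ_{t-1}^{T}\cdots\hbZ_{t-\tau}^{T}$ on the left while adding back the recorded contributions. The claim $\hbB_t^{-1}\bbp=\bbq_\tau$ then follows by matching the two passes.

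First I would analyze the first loop. Substituting $\alpha_u=\hat{\rho}_{t-u-1}\bbv_{t-u-1}^{T}\bbp_u$ into \eqref{eqn_define_pt} and using the definition of $\hbZ_{t-u-1}$ from \eqref{Z_rho_definitions} gives
\[
\bbp_{u+1}=\bbp_u-\hat{\rho}_{t-u-1}\hbr_{t-u-1}\bbv_{t-u-1}^{T}\bbp_u=\hbZ_{t-u-1}\,\bbp_u .
\]
Iterating yields $\bbp_u=\hbZ_{t-u}\cdots\hbZ_{t-1}\,\bbp$, so $\bbp_\tau=(\hbZ_{t-\tau}\cdots\hbZ_{t-1})\bbp$, which is exactly the vector multiplying $\hbB_{t,0}^{-1}$ on the right in \eqref{SLBFGS_update}; this justifies taking $\bbq_0:=\hbB_{t,0}^{-1}\bbp_\tau$ as the starting point.

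Next I would unfold the recursion \eqref{oLBFGS_update} applied to $\bbp$. Taking $u=\tau-1$ gives $\hbB_{t,\tau}^{-1}\bbp=\hbZ_{t-1}^{T}\big(\hbB_{t,\tau-1}^{-1}\hbZ_{t-1}\bbp\big)+\hat{\rho}_{t-1}\bbv_{t-1}(\bbv_{t-1}^{T}\bbp)$; since $\hbZ_{t-1}\bbp=\bbp_1$ and $\hat{\rho}_{t-1}\bbv_{t-1}^{T}\bbp=\alpha_0$, this equals $\hbZ_{t-1}^{T}(\hbB_{t,\tau-1}^{-1}\bbp_1)+\alpha_0\bbv_{t-1}$. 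Defining the auxiliary sequence $\bbg_k:=\hbB_{t,\tau-k}^{-1}\bbp_k$ and repeating the same peeling step shows
\[
\bbg_k=\hbZ_{t-k-1}^{T}\,\bbg_{k+1}+\alpha_k\,\bbv_{t-k-1},\qquad k=0,\dots,\tau-1,
\]
with base value $\bbg_\tau=\hbB_{t,0}^{-1}\bbp_\tau=\bbq_0$ and target $\bbg_0=\hbB_t^{-1}\bbp$; at each level the coefficient stripped off is precisely $\hat{\rho}_{t-k-1}\bbv_{t-k-1}^{T}\bbp_k=\alpha_k$, matching the scalars stored by the first loop.

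Finally I would reconcile this downward recursion with the upward second loop via the reindexing $\bbq_u=\bbg_{\tau-u}$. Expanding $\hbZ_{t-\tau+u}^{T}\bbq_u=\bbq_u-\hat{\rho}_{t-\tau+u}\bbv_{t-\tau+u}(\hbr_{t-\tau+u}^{T}\bbq_u)=\bbq_u-\beta_u\bbv_{t-\tau+u}$ shows that \eqref{eqn_define_qt} is exactly $\bbq_{u+1}=\hbZ_{t-\tau+u}^{T}\bbq_u+\alpha_{\tau-u-1}\bbv_{t-\tau+u}$, i.e.\ the $\bbg$-recursion at $k=\tau-u-1$. A one-line induction then gives $\bbq_u=\bbg_{\tau-u}$ for all $u$, so $\bbq_\tau=\bbg_0=\hbB_t^{-1}\bbp$, as claimed. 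I expect the only real obstacle to be the index bookkeeping: the first loop runs the index from $t-1$ down to $t-\tau$ while the second runs it from $t-\tau$ up to $t-1$, and the stored scalars reappear in reverse order as $\alpha_{\tau-u-1}$. Verifying that the coefficient peeled off at each level of the unfolding coincides with the $\alpha_k$ computed in the first loop, and that the $\beta_u$ correction is exactly the rank-one part of $\hbZ^{T}$ acting on $\bbq_u$, is where care is required; the remaining steps are mechanical.
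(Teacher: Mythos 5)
Your proof is correct, and it takes a genuinely different route from the paper's. The paper first expands $\hbB_t^{-1}$ into the sum of $\tau+1$ terms in \eqref{SLBFGS_update}, multiplies that sum by $\bbp$, simplifies the summands one at a time by recognizing the common factors $\hbZ_{t-1},\hbZ_{t-2},\ldots$, re-nests the result into the bracketed expression \eqref{eqn_inner_product_simplification_30}, and finally collapses the nesting from the inside out using the $\bbq_u$ recursion. You never expand at all: you work directly with the recursion \eqref{oLBFGS_update}, introduce the auxiliary sequence $\bbg_k := \hbB_{t,\tau-k}^{-1}\bbp_k$, peel off one application of \eqref{oLBFGS_update} to obtain the downward recursion $\bbg_k = \hbZ_{t-k-1}^{T}\bbg_{k+1} + \alpha_k \bbv_{t-k-1}$, and close with the invariant $\bbq_u = \bbg_{\tau-u}$ verified by induction (your index bookkeeping checks out: setting $u=\tau-k-1$ in \eqref{oLBFGS_update} indeed turns $t-\tau+u$ into $t-k-1$, and the stored scalars reappear reversed as $\alpha_{\tau-u-1}$). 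Both arguments rest on the same two identities --- the first loop computes $\bbp_{u+1}=\hbZ_{t-u-1}\bbp_u$ and the second computes $\bbq_{u+1}=\hbZ_{t-\tau+u}^{T}\bbq_u+\alpha_{\tau-u-1}\bbv_{t-\tau+u}$ --- but yours replaces the paper's ``repeat this process $\tau$ times'' manipulation of a long sum with a short induction on an explicit invariant, which is tighter and easier to audit. What the paper's expansion buys instead is algorithmic transparency: the expanded form \eqref{SLBFGS_update} makes visible exactly which matrix-vector products are shared across summands, which is the observation that motivates the $O(\tau n)$ complexity claim in the surrounding text, whereas your invariant-based argument establishes correctness without exhibiting that reuse structure explicitly.
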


\begin{proof} See Appendix \ref{apx_lbfgs_reduced_cost}. \end{proof}
%
Proposition \ref{oLBFGS_efficient_descent_dircetion_computation} asserts that it is possible to reduce the computation of the product $\hbB_{t}^{-1}\bbp$ between the oLBFGS Hessian approximation matrix and arbitrary vector $\bbp$ to the computation of two vector sequences $\{\bbp_{u}\}_{u=0}^{\tau-1}$ and $\{\bbq_{u}\}_{u=0}^{\tau-1}$. The product $\hbB_{t}^{-1}\bbp\ =\ \bbq_{\tau}$ is given by the last element of the latter sequence. Since determination of each of the elements of each sequence requires $O(n)$ operations and the total number of elements in each sequence is $\tau$ the total operation cost to compute both sequences is of order $O(\tau n)$. In computing $\hbB_{t}^{-1}\bbp$ we also need to add the cost of the product $\bbq_0= \hbB_{t,0}^{-1} \bbp_{\tau}$ that links both sequences. To maintain overall computation cost of order $O(\tau n)$ this matrix has to have a sparse or low rank structure. A common choice in LBFGS, that we adopt for oLBFGS, is to make $\hbB_{t,0}^{-1} = \hat{\gamma}_{t}\bbI$. The scalar constant $\hat{\gamma}_t$ is a function of the variable and stochastic gradient variations $\bbv_{t-1}$ and $\hbr_{t-1}$, explicitly given by
\begin{equation}\label{initial_matrix_update_2}
   \hat{\gamma_{t}} 
      \ =\ \frac{\bbv_{t-1}^{T}\hbr_{t-1}}{\hbr_{t-1}^{T}\hbr_{t-1}}\, 
      \ =\ \frac{\bbv_{t-1}^{T}\hbr_{t-1}}{\|\hbr_{t-1}\|^2}\, .
\end{equation}
with the value at the first iteration being $\hat{\gamma}_{0}=1$. The scaling factor $\hat{\gamma}_{t}$ attempts to estimate one of the eigenvalues of the Hessian matrix at step $t$ and has been observed to work well in practice; see e.g., \cite{DingNocedal, Nocedal}. Further observe that the cost of computing $\hat{\gamma}_t$ is of order $O(n)$ and that since $\hbB_{t,0}^{-1}$ is diagonal cost of computing the product $\bbq_0= \hbB_{t,0}^{-1} \bbp_{\tau}$ is also of order $O(n)$. We adopt the initialization in \eqref{initial_matrix_update_2} in our subsequent analysis and numerical experiments.

%
\begin{algorithm}[t]{
\caption{Computation of oLBFGS step $\bbq = \hbB_{t}^{-1} \bbp$ when called with $\bbp=\hbs(\bbw_t,\tbtheta_t)$.}
\label{algo_lbfgs} 
\begin{algorithmic}[1]
   \STATE \textbf{function}  
          $\bbq=\bbq_\tau$ 
          = oLBFGS Step$\left(\hbB_{t,0}^{-1}, \
                              \bbp=\bbp_0, \
                              \{\bbv_{u},\hbr_{u}\}_{u=t-\tau}^{t-1}\right)$ 
   \FOR [Loop to compute constants $\alpha_u$ and sequence $\bbp_u$] 
        {$u= 0, 1, \ldots, \tau-1$ } 
      \STATE Compute and store scalar 
             $\alpha_{u}= \hat{\rho}_{t-u-1}\bbv_{t-u-1}^{T}\bbp_{u}$
      \STATE Update sequence vector 
             $\bbp_{u+1}=\bbp_u-\alpha_{u}\hbr_{t-u-1}$.
             [cf. \eqref{eqn_define_pt}]
   \ENDFOR
   \STATE Multiply $\bbp_\tau$ by initial matrix:  
          $\bbq_{0}= \hbB_{t,0}^{-1} \bbp_{\tau}  $
   \FOR [Loop to compute constants $\beta_u$ and sequence $\bbq_u$] 
        {$u= 0, 1, \ldots, \tau-1$}
      \STATE Compute scalar 
             $\beta_u= \hat{\rho}_{t-\tau+u}\hbr_{t-\tau+u}^{T}\bbq_{u}$ 
      \STATE Update sequence vector
             $\bbq_{u+1}=\bbq_{u}+(\alpha_{\tau-u-1}-\beta_{u})\bbv_{t-\tau+u}$
             [cf. \eqref{eqn_define_qt}]             
   \ENDFOR \ \{return $\bbq = \bbq_\tau$\} 
\end{algorithmic}}\end{algorithm}


The computation of the product $\hbB_{t}^{-1}\bbp$ using the result in Proposition \ref{oLBFGS_efficient_descent_dircetion_computation} is summarized in algorithmic form in the function in Algorithm \ref{algo_lbfgs}. The function receives as arguments the initial matrix $\hbB_{t,0}^{-1},$ the sequence of variable and stochastic gradient variations $\{\bbv_{u},\hbr_{u}\}_{u=t-\tau}^{t-1}$ and the vector $\bbp$ to produce the outcome $\bbq=\bbq_\tau=\hbB_{t}^{-1}\bbp$. When called with the stochastic gradient $\bbp=\hbs(\bbw_t,\tbtheta_t)$, the function outputs the oLBFGS step $\hbd_{t}:=\hbB_t^{-1}\hbs(\bbw_t,\tbtheta_t)$ needed to implement the oLBFGS descent step in \eqref{eqn_olbfgs_descent}. The core of Algorithm \ref{algo_lbfgs} is given by the loop in steps 2-5 that computes the constants $\alpha_u$ and sequence elements $\bbp_u$ as well as the loop in steps 7-10 that computes the constants $\beta_u$ and sequence elements $\bbq_u$. The two loops are linked by the initialization of the second sequence with the outcome of the first which is performed in Step 6. To implement the first loop we require $\tau$ inner products in Step 4 and $\tau$ vector summations in Step 5 which yield a total of $2\tau n$ multiplications. Likewise, the second loop requires $\tau$ inner products and $\tau $ vector summations in steps 9 and 10, respectively, which yields a total cost of also $2\tau n$ multiplications. Since the initial Hessian inverse approximation matrix $\hbB_{t,0}^{-1}$ is diagonal the cost of computation $\hbB_{t,0}^{-1} \bbp_{\tau} $ in Step 6 is $n$ multiplications. Thus, Algorithm \ref{algo_lbfgs} requires a total of $(4\tau+1)n$ multiplications which affirms the complexity cost of order $O(\tau n)$ for oLBFGS.

For reference, oLBFGS is also summarized in algorithmic form in Algorithm \ref{algo_stochastic_bfgs}. As with any stochastic descent algorithm the descent iteration is implemented in three steps: the acquisition of $L$ samples in Step 2, the computation of the stochastic gradient in Step 3, and the implementation of the descent update on the variable $\bbw_t$ in Step 6. Steps 4 and 5 are devoted to the computation of the oLBFGS descent direction $\hbd_{t}$. In Step 4 we initialize the estimate $\hbB_{t,0}= \hat{\gamma_{t}}\bbI$ as a scaled identity matrix using the expression for $\hat{\gamma_{t}}$ in \eqref{initial_matrix_update_2} for $t>0$. The value of $\gamma_t=\gamma_0$ for $t=0$ is left as an input for the algorithm. We use $\hat{\gamma_{0}}=1$ in our numerical tests. In Step 5 we use Algorithm 1 for efficient computation of the descent direction $\hbd_{t}=\hbB_{t}^{-1}\hbs(\bbw_{t},\tbtheta_{t})$. Step 7 determines the value of the stochastic gradient $\hbs(\bbw_{t+1},\tbtheta_{t})$ so that the variable variations $\bbv_{t}$ and stochastic gradient variations $\hbr_{t}$ become available for the computation of the curvature approximation matrix $\hbB_{t}^{-1}$. In Step 8 the variable variation  $\bbv_{t}$ and stochastic gradient variation $\hbr_{t}$ are computed to be used in the next iteration. We analyze convergence properties of this algorithm in Section \ref{sec_convergence}, study its application to SVMs in Section \ref{sec:SVMproblem}, and develop an application to search engine advertisement in Section \ref{sec:ad_application}.

%
{\begin{algorithm}[tb]
\caption{oLBFGS}\label{algo_stochastic_bfgs} 
\begin{algorithmic}[1] 
\small{\REQUIRE  Initial value $\bbw_0$. Initial Hessian approximation parameter $\hat{\gamma}_{0}=1$.
\FOR {$t=0,1,2,\ldots$}
   \STATE Acquire $L$ independent samples $\tbtheta_t=[\bbtheta_{t1},\ldots,\bbtheta_{tL}]$
   \STATE Compute stochastic gradient:        
          $\displaystyle{
               \hbs(\bbw_{t},\tbtheta_{t}) 
                   = \frac{1}{L}\sum_{l=1}^{L}  \nabla_{\bbw} f(\bbw_{t},{\bbtheta_{tl}})
          }$ [cf. \eqref{stochastic_gradient}]   
   \STATE Initialize Hessian inverse estimate as 
          $\hbB_{t,0}^{-1} = \hat{\gamma_{t}}\bbI$ with 
          $\displaystyle{   
               \hat{\gamma_{t}}
               = \frac{\bbv_{t-1}^{T}\hbr_{t-1}}{\hbr_{t-1}^{T}\hbr_{t-1}}
          }$ for $t>0$ [cf \eqref{initial_matrix_update_2}]   
     \STATE Compute descent direction with Algorithm \ref{algo_lbfgs}:  
           $\displaystyle{\hbd_{t} = 
            \text{oLBFGS Step}\left(\hbB_{t,0}^{-1}, \   
                               \hbs(\bbw_{t},\tbtheta_{t}) , \
                               \{\bbv_{u},\hbr_{u}\}_{u=t-\tau}^{t-1}\right)}$    
   \STATE Descend along direction $\hbd_{t}$:
          $\displaystyle{   
               \bbw_{t+1} = \bbw_{t}-\epsilon_{t} \hbd_{t}
          }$ [cf. \eqref{eqn_olbfgs_descent}]   
   \STATE Compute stochastic gradient:       
          $\displaystyle{
               \hbs(\bbw_{t+1},\tbtheta_{t}) 
                   = \frac{1}{L}\sum_{l=1}^{L}  \nabla_{\bbw} f(\bbw_{t+1},{\bbtheta_{tl}})
          }$ [cf. \eqref{stochastic_gradient}]
   \STATE Variations    
          $\displaystyle{\bbv_{t}= \bbw_{t+1}-\bbw_{t}}$ 
          [variable, cf. \eqref{ball}]           
          $\displaystyle{\hbr_{t}=\hbs(\bbw_{t+1},\tbtheta_{t})-\hbs(\bbw_{t},\tbtheta_{t})}$ 
          [stoch. gradient, cf.\eqref{chris}]
\ENDFOR}
\end{algorithmic}\end{algorithm}
%


\section{Convergence analysis}\label{sec_convergence}

For the subsequent analysis it is convenient to define the instantaneous objective function associated with samples $\tbtheta=[\bbtheta_1,\ldots,\bbtheta_L]$ as 
\begin{equation}\label{eqn_isntantaneous_dual_function}
   \hhatf(\bbw,{\tbtheta}) := \frac{1}{L}\sum_{l=1}^L f(\bbw,{\bbtheta_{l}}).
\end{equation}
The definition of the instantaneous objective function $ \hhatf(\bbw,{\tbtheta})$ in association with  the fact that $F(\bbw) := \mbE_{\bbtheta}[f(\bbw,{\bbtheta})]$ implies that
\begin{equation}\label{eqn_dual_function_as_expectation_of_isntantaneous_dual_function}
   F(\bbw) = \mbE_{\bbtheta}[\hhatf(\bbw,{\tbtheta})].
\end{equation}
Our goal here is to show that as time progresses the sequence of variable iterates $\bbw_t$ approaches the optimal argument $\bbw^*$. In proving this result we make the following assumptions.

%
\begin{assumption}\label{ass_intantaneous_hessian_bounds}\normalfont  The instantaneous functions $\hhatf(\bbw,{\tbtheta})$ are twice differentiable and the eigenvalues of the instantaneous Hessian $\hat{ \bbH}(\bbw,{\tbtheta} )=\nabla_{\bbw}^2\hhatf(\bbw,{\tbtheta})$ are  bounded between constants $0<\tdm$ and $\tdM<\infty$ for all random variables $\tbtheta$,
\begin{equation}\label{hassan}
   \tdm\bbI \ \preceq\ \hat{ \bbH}(\bbw,{\tbtheta}) \ \preceq \ \tdM \bbI.
\end{equation} \end{assumption}

%
\begin{assumption}\normalfont\label{ass_bounded_stochastic_gradient_norm} The second moment of the norm of the stochastic gradient is bounded for all $\bbw$. i.e., there exists a constant $S^2$ such that for all variables $\bbw$ it holds
\begin{equation}\label{ekhtelaf}
   \mbE_{\bbtheta} \big{[} \| \hbs(\bbw_{t},\tbtheta_{t})\|^{2} \given \bbw_{t} \big{]} \leq S^{2}.
\end{equation} \end{assumption}

\begin{assumption}\normalfont\label{stepsize_ass}
The step size sequence is selected as nonsummable but square summable, i.e., 
\begin{equation}\label{stepsize_condition}
   {\sum_{t=0}^{\infty} \eps_t = \infty, \quad \text{and} \quad
          \sum_{t=0}^{\infty} \eps_t^2 < \infty }.
\end{equation}
\end{assumption}

%
Assumptions \ref{ass_bounded_stochastic_gradient_norm} and \ref{stepsize_ass} are customary in stochastic optimization. The restriction imposed by Assumption \ref{ass_bounded_stochastic_gradient_norm} is intended to limit the random variation of stochastic gradients. If the variance of their norm is unbounded it is possible to have rare events that derail progress towards convergence. The condition in Assumption \ref{stepsize_ass} balances descent towards optimal arguments -- which requires a slowly decreasing stepsize -- with the eventual elimination of random variations -- which requires rapidly decreasing stepsizes. An effective step size choice for which Assumption \ref{stepsize_ass} holds is to make $\eps_{t}= \eps_{0} T_{0}/(T_{0}+t)$, for given parameters $\eps_{0}$ and $T_{0}$ that control the initial step size and its speed of decrease, respectively. Assumption \ref{ass_intantaneous_hessian_bounds} is stronger than usual and specific to oLBFGS. Observe that considering the linearity of the expectation operator and the expression in \eqref{eqn_dual_function_as_expectation_of_isntantaneous_dual_function} it follows  that the Hessian of the average function can be written as $\nabla_{\bbw}^{2}F(\bbw)= \bbH(\bbw)=\mbE_{\bbtheta}[\hat{\bbH}(\bbw,{\tbtheta})]$. Combining this observation with the bounds in \eqref{hassan} we conclude that there are constants $m\geq\tdm$ and $M\leq\tdM$ such that
\begin{equation}\label{bbb}
   \tdm\bbI \ \preceq\ m\bbI\ \preceq\ \bbH(\bbw) \ \preceq  M\bbI \ \preceq \ \tdM\bbI.
\end{equation} 
The bounds in \eqref{bbb} are customary in convergence proofs of descent methods. For the results here the stronger condition spelled in Assumption \ref{ass_intantaneous_hessian_bounds} is needed. This assumption in necessary to guarantee that the inner product $\hbr_{t}^{T}\bbv_{t}>0$ is positive as we show in the following lemma.

%
\begin{lemma}\label{lecce}
Consider the stochastic gradient variation $\hbr_{t}$ defined in \eqref{chris} and the variable variation $\bbv_{t}$ defined in \eqref{ball}. Let Assumption \ref{ass_intantaneous_hessian_bounds} hold so that we have lower and upper bounds $\tdm$ and $\tdM$ on the eigenvalues of the instantaneous Hessians. Then, for all steps $t$ the inner product of variable and stochastic gradient variations $ \hbr_{t}^{T}\bbv_{t}$ is bounded below as
\begin{equation}\label{claim233}
\tdm\|\bbv_{t}\|^2    \leq\    \hbr_{t}^{T}\bbv_{t} \  .
\end{equation}
Furthermore, the ratio of stochastic gradient variation squared norm $\|\hbr_{t}\|^2=\hbr_{t}^{T}\hbr_{t}$ to inner product of variable and stochastic gradient variations is bounded as
\begin{equation}\label{claim444}
   \tdm \ \leq\ \frac{\hbr_{t}^{T}\hbr_{t}}{\hbr_{t}^{T}\bbv_{t}} 
        \ =   \ \frac{\|\hbr_{t}\|^2}{\hbr_{t}^{T}\bbv_{t}} 
        \ \leq\ \tdM . 
\end{equation}
\end{lemma}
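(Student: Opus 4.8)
The plan is to exploit the fact that the stochastic gradient variation $\hbr_{t}$ is formed from the \emph{common} sample set $\tbtheta_{t}$, so that it is exactly the gradient increment of the single fixed function $\hhatf(\cdot,\tbtheta_{t})$ evaluated at the two points $\bbw_{t+1}$ and $\bbw_{t}$. Since the definitions in \eqref{stochastic_gradient} and \eqref{eqn_isntantaneous_dual_function} give $\hbs(\bbw,\tbtheta)=\nabla_{\bbw}\hhatf(\bbw,\tbtheta)$, the fundamental theorem of calculus along the segment joining $\bbw_{t}$ to $\bbw_{t+1}$ yields $\hbr_{t}=\bbG_{t}\bbv_{t}$, where
$$\bbG_{t} \ :=\ \int_{0}^{1} \hat{\bbH}(\bbw_{t}+s\bbv_{t},\tbtheta_{t})\,ds .$$
I would first record that $\bbG_{t}$ is symmetric and that $\tdm\bbI\preceq\bbG_{t}\preceq\tdM\bbI$: each integrand $\hat{\bbH}(\cdot,\tbtheta_{t})$ is symmetric and obeys the eigenvalue bounds of Assumption \ref{ass_intantaneous_hessian_bounds}, and these positive-semidefinite orderings are preserved under integration in $s$.

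With the representation $\hbr_{t}=\bbG_{t}\bbv_{t}$ in hand, the first bound \eqref{claim233} is immediate: from $\bbG_{t}\succeq\tdm\bbI$ we get $\hbr_{t}^{T}\bbv_{t}=\bbv_{t}^{T}\bbG_{t}\bbv_{t}\geq\tdm\|\bbv_{t}\|^{2}$.

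For the ratio bound \eqref{claim444} I would rewrite both numerator and denominator through $\bbG_{t}$. Setting $\bbz:=\bbG_{t}^{1/2}\bbv_{t}$ (the symmetric square root exists because $\bbG_{t}\succeq\tdm\bbI\succ\bb0$), we have $\hbr_{t}^{T}\bbv_{t}=\bbv_{t}^{T}\bbG_{t}\bbv_{t}=\|\bbz\|^{2}$ and $\|\hbr_{t}\|^{2}=\bbv_{t}^{T}\bbG_{t}^{2}\bbv_{t}=\bbz^{T}\bbG_{t}\bbz$. The quantity in \eqref{claim444} is therefore the Rayleigh quotient $\bbz^{T}\bbG_{t}\bbz/\|\bbz\|^{2}$, which lies between the smallest and largest eigenvalues of $\bbG_{t}$, hence in $[\tdm,\tdM]$, completing the proof.

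The main obstacle — essentially the only nonroutine point — is the first step: recognizing that the shared sample set $\tbtheta_{t}$ is precisely what allows $\hbr_{t}$ to be treated as the gradient increment of one fixed, twice-differentiable function, so that a single averaged Hessian $\bbG_{t}$ controls the increment and the bounds of Assumption \ref{ass_intantaneous_hessian_bounds} apply verbatim. Had the two stochastic gradients used different samples, as in the alternative definition discarded after \eqref{chris}, no single Hessian would govern the difference and the argument would break down. Everything after the representation $\hbr_{t}=\bbG_{t}\bbv_{t}$ is linear algebra: stability of the positive-semidefinite ordering under integration and the Rayleigh-quotient sandwich.
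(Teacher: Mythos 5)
Your proof is correct and follows essentially the same route as the paper's: the mean Hessian $\int_{0}^{1}\hat{\bbH}(\bbw_{t}+s\bbv_{t},\tbtheta_{t})\,ds$ obtained from the fundamental theorem of calculus (exploiting the common sample set $\tbtheta_{t}$), the bound $\tdm\bbI\preceq\bbG_{t}\preceq\tdM\bbI$, and the square-root substitution $\bbz=\bbG_{t}^{1/2}\bbv_{t}$ turning \eqref{claim444} into a Rayleigh quotient. No gaps; this matches the paper's argument step for step.
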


\begin{proof} See Appendix \ref{appx_proof_lemma_1}. \end{proof}

%
According to Lemma \ref{lecce}, strong convexity of instantaneous functions $\hhatf(\bbw,{\tbtheta})$ guaranties positiveness of the inner product $\bbv_{t}^{T}\hbr_{t}$ as long as the variable variation is not identically null. In turn, this implies that the constant $\hat{\gamma}_{t}$ in \eqref{initial_matrix_update_2} is nonnegative and that, as a consequence, the initial Hessian inverse approximation ${\hbB_{t,0}}^{-1}$ is positive definite for all steps $t$. The positive definiteness of ${\hbB_{t,0}}^{-1}$ in association with the positiveness of the inner product of variable and stochastic gradient variations $\bbv_{t}^{T}\hbr_{t}>0$ further guarantees that all the matrices $\hbB_{t,u+1}^{-1}$, including the matrix $\hbB_{t}^{-1}=\hbB_{t,\tau}^{-1}$ in particular, that follow the update rule in \eqref{oLBFGS_update} stay positive definite -- see \cite{AryanAleTSP} for details. This proves that \eqref{eqn_olbfgs_descent} is a proper stochastic descent iteration because the stochastic gradient $\hbs(\bbw_t,\tbtheta_t)$ is moderated by a positive definite matrix. However, this fact alone is not enough to guarantee convergence because the minimum and maximum eigenvalues of $\hbB_{t}^{-1}$ could become arbitrarily small and arbitrarily large, respectively. To prove convergence we show this is not possible by deriving explicit lower and upper bounds on these eigenvalues.

The analysis is easier if we consider the matrix $\hbB_{t}$ -- as opposed to $\hbB_{t}^{-1}$. Consider then the update in \eqref{oLBFGS_update}, and use the Sherman-Morrison formula to rewrite as an update that relates $\hbB_{t,u+1}$ to $\hbB_{t,u}$,
\begin{equation}\label{Hessian_appro_update}          
  					  \hbB_{t,u+1} \ =  \  \hbB_{t,u}
	- {{ \hbB_{t,u}\bbv_{t-\tau+u}\bbv_{t-\tau+u}^{T}{\hbB_{t,u}}}\over{\bbv_{t-\tau+u}^{T}\hbB_{t,u}\bbv_{t-\tau+u}}} 
	+ 			  		{{\hbr_{t-\tau+u}\hbr_{t-\tau+u}^{T}}\over{\bbv_{t-\!\tau\!+u}^{T}\hbr_{t-\tau+u}}},
          \end{equation}
for $u=0, \dots, \tau-1$ and $\hbB_{t,0}= 1/\hat{\gamma}_{t} \bbI$ as per \eqref{initial_matrix_update_2}. As in \eqref{oLBFGS_update}, the Hessian approximation at step $t$ is $\hbB_{t}=\hbB_{t,\tau}$. In the following lemma we use the update formula in \eqref{Hessian_appro_update} to find bounds on the trace and determinant of the Hessian approximation $\hbB_{t}$.

%
\begin{lemma}\label{lemma_determinant_and_trace_bounds}
Consider the Hessian approximation $\hbB_{t} = \hbB_{t,\tau}$ defined by the recursion in \eqref{Hessian_appro_update} with $\hbB_{t,0} = \hat{\gamma}_{t}^{-1}\bbI$ and $\hat{\gamma}_{t}$ as given by \eqref{initial_matrix_update_2}. If Assumption \ref{ass_intantaneous_hessian_bounds} holds true, the trace $\tr(\hbB_{t})$ of the Hessian approximation $\hbB_{t}$ is uniformly upper bounded for all times $t\geq1$,
\begin{equation}\label{trace_bound_1}
    \tr\left(\hbB_{t}\right)\ \leq\  (n + \tau) \tdM.
\end{equation}
Likewise, if Assumption \ref{ass_intantaneous_hessian_bounds} holds true, the determinant $\det(\hbB_{t})$ of the Hessian approximation $\hbB_{t}$ is uniformly lower bounded for all times $t$
\begin{equation}\label{det_bound_1}
\det\left(\hbB_{t}\right)\  \geq\  \frac{\tdm^{n+\tau}}{[( n+ \tau) \tdM]^{\tau}} \ .
\end{equation}\end{lemma}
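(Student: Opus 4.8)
The plan is to track how the trace and the determinant evolve through the $\tau$ rank-two updates in \eqref{Hessian_appro_update} that carry $\hbB_{t,0}$ to $\hbB_{t,\tau}=\hbB_{t}$. The natural starting point is the initial matrix $\hbB_{t,0}=\hat\gamma_t^{-1}\bbI$. Since $\hat\gamma_t^{-1}=\|\hbr_{t-1}\|^2/(\bbv_{t-1}^T\hbr_{t-1})$ by \eqref{initial_matrix_update_2}, the bound \eqref{claim444} of Lemma \ref{lecce} gives $\tdm\leq\hat\gamma_t^{-1}\leq\tdM$, so that $\tr(\hbB_{t,0})\leq n\tdM$ and $\det(\hbB_{t,0})=(\hat\gamma_t^{-1})^n\geq\tdm^n$. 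These two facts seed the two accumulations.

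For the trace bound I would take the trace of the update \eqref{Hessian_appro_update}. Writing $\bbv:=\bbv_{t-\tau+u}$ and $\hbr:=\hbr_{t-\tau+u}$, the curvature-removal term contributes $\|\hbB_{t,u}\bbv\|^2/(\bbv^T\hbB_{t,u}\bbv)\geq0$, so it can only decrease the trace, whereas the curvature-adding term contributes $\tr(\hbr\hbr^T)/(\bbv^T\hbr)=\|\hbr\|^2/(\bbv^T\hbr)\leq\tdM$ again by \eqref{claim444}. Hence $\tr(\hbB_{t,u+1})\leq\tr(\hbB_{t,u})+\tdM$, and summing over the $\tau$ updates gives $\tr(\hbB_{t})\leq\tr(\hbB_{t,0})+\tau\tdM\leq(n+\tau)\tdM$, which is \eqref{trace_bound_1}.

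The determinant bound is the delicate part. The key identity is the classical BFGS determinant recursion for the rank-two update \eqref{Hessian_appro_update},
\[
   \det(\hbB_{t,u+1})=\det(\hbB_{t,u})\,\frac{\hbr^T\bbv}{\bbv^T\hbB_{t,u}\bbv}.
\]
I would establish this by conjugating with $\hbB_{t,u}^{-1/2}$, which turns the update into $\hbB_{t,u}^{-1/2}\hbB_{t,u+1}\hbB_{t,u}^{-1/2}=\bbI-\tbv\tbv^T/\|\tbv\|^2+\tbr\tbr^T/(\tbr^T\tbv)$ with $\tbv=\hbB_{t,u}^{1/2}\bbv$ and $\tbr=\hbB_{t,u}^{-1/2}\hbr$, and then computing the determinant of the right-hand side. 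The main obstacle is precisely this computation: the projection $\bbI-\tbv\tbv^T/\|\tbv\|^2$ is singular (rank $n-1$), so the two rank-one pieces cannot be treated one at a time and must be handled jointly, e.g.\ through a Schur-complement evaluation in an orthonormal basis aligned with $\tbv$. That computation collapses to the single factor $\tbr^T\tbv/\|\tbv\|^2=\hbr^T\bbv/(\bbv^T\hbB_{t,u}\bbv)$, giving the recursion above.

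Once the recursion is in hand, I would bound each factor. The numerator obeys $\hbr^T\bbv\geq\tdm\|\bbv\|^2$ by \eqref{claim233}, while the denominator satisfies $\bbv^T\hbB_{t,u}\bbv\leq\lambda_{\max}(\hbB_{t,u})\|\bbv\|^2\leq\tr(\hbB_{t,u})\|\bbv\|^2\leq(n+\tau)\tdM\|\bbv\|^2$, using positive definiteness of $\hbB_{t,u}$ together with the trace bound already derived (which holds for every intermediate index $u\leq\tau$). Thus each factor is at least $\tdm/[(n+\tau)\tdM]$, and multiplying the $\tau$ factors against $\det(\hbB_{t,0})\geq\tdm^n$ yields $\det(\hbB_{t})\geq\tdm^n\cdot\tdm^\tau/[(n+\tau)\tdM]^\tau$, which is exactly \eqref{det_bound_1}.
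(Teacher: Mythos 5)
Your proof is correct, and its skeleton coincides with the paper's: the same per-update trace recursion $\tr(\hbB_{t,u+1})\leq\tr(\hbB_{t,u})+\tdM$ seeded by $\tr(\hbB_{t,0})\leq n\tdM$, the same determinant recursion $\det(\hbB_{t,u+1})=\det(\hbB_{t,u})\,\hbr^T\bbv/(\bbv^T\hbB_{t,u}\bbv)$ seeded by $\det(\hbB_{t,0})\geq\tdm^n$, and the same bounding of each factor via \eqref{claim233} in the numerator and $\lambda_{\max}(\hbB_{t,u})\leq\tr(\hbB_{t,u})\leq(n+\tau)\tdM$ in the denominator. The only place you genuinely diverge is in how the determinant recursion is established. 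The paper factors $\hbB_{t,u}$ out \emph{asymmetrically}, obtaining $\det(\hbB_{t,u})\det(\bbI+\bbu_1\bbu_2^T+\bbu_3\bbu_4^T)$ with a non-symmetric rank-two perturbation, and then invokes the cited identity $\det(\bbI+\bbu_1\bbu_2^T+\bbu_3\bbu_4^T)=(1+\bbu_1^T\bbu_2)(1+\bbu_3^T\bbu_4)-(\bbu_1^T\bbu_4)(\bbu_2^T\bbu_3)$ (Lemma 3.3 of \cite{Li}), under which the first product vanishes and the cross term collapses to the desired factor. You instead conjugate symmetrically by $\hbB_{t,u}^{\pm1/2}$, reducing the update to $\bbI-\tbv\tbv^T/\|\tbv\|^2+\tbr\tbr^T/(\tbr^T\tbv)$, and evaluate its determinant directly (Schur complement in a basis aligned with $\tbv$); I verified that this computation does collapse to $\tbr^T\tbv/\|\tbv\|^2=\hbr^T\bbv/(\bbv^T\hbB_{t,u}\bbv)$, and your observation that the two rank-one pieces must be handled jointly (since $\bbI-\tbv\tbv^T/\|\tbv\|^2$ is singular) is exactly the right caution. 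The trade-off: the paper's route is shorter but leans on an external determinant identity, while yours is self-contained and exploits the symmetry of the BFGS update, at the cost of a somewhat longer hands-on linear-algebra computation.
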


\begin{proof} See Appendix \ref{appx_lemma3}. \end{proof}

%
Lemma \ref{lemma_determinant_and_trace_bounds} states that the trace and determinants of the Hessian approximation matrix $\hbB_{t} = \hbB_{t,\tau}$ are bounded for all times $t\geq1$. For time $t=0$ we can write a similar bound that takes into account the fact that the constant $\gamma_t$ that initializes the recursion in \eqref{Hessian_appro_update} is $\gamma_0=1$. Given that we are interested in an asymptotic convergence analysis, this bound in inconsequential. The bounds on the trace and determinant of $\hbB_{t}$ are respectivey equivalent to bounds in the sum and product of its eigenvalues. Further considering that the matrix $\hbB_{t}$ is positive definite, as it follows from Lemma \ref{lecce}, these bounds can be further transformed into bounds on the smalls and largest eigenvalue of $\hbB_t$. The resulting bounds are formally stated in the following lemma.

%
\begin{lemma}\label{norooz}
Consider the Hessian approximation $\hbB_{t} = \hbB_{t,\tau}$ defined by the recursion in \eqref{Hessian_appro_update} with $\hbB_{t,0} = \hat{\gamma}_{t}^{-1}\bbI$ and $\hat{\gamma}_{t}$ as given by \eqref{initial_matrix_update_2}. Define the strictly positive constant $0<c:= \tdm^{n+\tau}/[(n + \tau )\tdM]^{n+\tau-1}$ and the finite constant $C:=(n + \tau) \tdM<\infty$. If Assumption \ref{ass_intantaneous_hessian_bounds} holds true, the range of eigenvalues of $\hbB_{t}$ is bounded by $c$ and $C$ for all time steps $t\geq1$, i.e., 
\begin{equation}\label{claim888}
     \frac{\tdm^{n+\tau}} {{[(n + \tau )\tdM]}^{n+\tau-1}}\, \bbI
        \ =:     \  c \bbI 
        \ \preceq\  \hbB_{t}\ 
        \ \preceq\  C\bbI
        \ : =    \  (n + \tau) \tdM\, \bbI.
\end{equation} \end{lemma}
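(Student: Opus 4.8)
The plan is to convert the trace and determinant bounds from Lemma \ref{lemma_determinant_and_trace_bounds} into bounds on the extreme eigenvalues of $\hbB_t$, exploiting the fact that $\hbB_t$ is symmetric and positive definite (as established after Lemma \ref{lecce}). Write the eigenvalues of the $n\times n$ matrix $\hbB_t$ as $\lambda_1\geq\lambda_2\geq\cdots\geq\lambda_n>0$. Since positive definiteness makes every $\lambda_i$ strictly positive, the statement $c\bbI\preceq\hbB_t\preceq C\bbI$ is equivalent to the two scalar inequalities $\lambda_1\leq C$ and $\lambda_n\geq c$, so it suffices to establish these.

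For the upper bound, I would use that the largest eigenvalue of a positive definite matrix never exceeds its trace, because all eigenvalues are positive and hence $\lambda_1\leq\sum_{i=1}^n\lambda_i=\tr(\hbB_t)$. Combining this with \eqref{trace_bound_1} gives
\begin{equation}
   \lambda_1 \ \leq\ \tr(\hbB_t) \ \leq\ (n+\tau)\tdM \ =\ C,
\end{equation}
which is exactly $\hbB_t\preceq C\bbI$.

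For the lower bound, the key step is to isolate the smallest eigenvalue in the determinant. Writing $\det(\hbB_t)=\prod_{i=1}^n\lambda_i$ and solving for $\lambda_n$ gives $\lambda_n=\det(\hbB_t)\big/\prod_{i=1}^{n-1}\lambda_i$. Bounding each of the $n-1$ remaining factors above by $\lambda_1\leq C$ and invoking the determinant bound \eqref{det_bound_1} yields
\begin{equation}
   \lambda_n \ \geq\ \frac{\det(\hbB_t)}{C^{\,n-1}}
             \ \geq\ \frac{\tdm^{n+\tau}/[(n+\tau)\tdM]^{\tau}}{[(n+\tau)\tdM]^{n-1}}
             \ =\ \frac{\tdm^{n+\tau}}{[(n+\tau)\tdM]^{n+\tau-1}} \ =\ c,
\end{equation}
which is $\hbB_t\succeq c\bbI$.

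There is no genuinely hard step here; the argument is entirely mechanical once the two bounds of Lemma \ref{lemma_determinant_and_trace_bounds} are in hand. The only points demanding care are bookkeeping ones: ensuring positive definiteness is correctly cited so that bounding $\lambda_1$ by the trace is legitimate and so that all $\lambda_i>0$, using the correct count of $n-1$ cofactor eigenvalues when isolating $\lambda_n$, and verifying that the exponents combine as $\tau+(n-1)=n+\tau-1$ to reproduce precisely the advertised constant $c$. I would therefore keep the write-up short, emphasizing the two one-line eigenvalue estimates and the exponent arithmetic rather than any conceptual difficulty.
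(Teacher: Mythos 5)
Your proposal is correct and follows essentially the same route as the paper's proof: the trace bound from Lemma \ref{lemma_determinant_and_trace_bounds} gives $\lambda_{\max}(\hbB_t)\leq C$ since all eigenvalues are positive, and the determinant bound combined with bounding the remaining $n-1$ eigenvalues by $C$ yields $\lambda_{\min}(\hbB_t)\geq c$. The only cosmetic difference is that the paper writes the lower-bound argument for an arbitrary eigenvalue $\lambda_j$ rather than isolating $\lambda_n$, which is immaterial.
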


\begin{proof}  See Appendix \ref{appx_lemma4}. \end{proof}

%
The bounds in Lemma \ref{norooz} imply that their respective inverses are bounds on the range of the eigenvalues of the Hessian inverse approximation matrix $\hbB_{t}^{-1}$. Specifically, the minimum eigenvalue of the Hessian inverse approximation $\hbB_{t}^{-1}$ is larger than $1/C$ and the maximum eigenvalue of  $\hbB_{t}^{-1}$ does not exceed $1/c$, or, equivalently,
\begin{equation}\label{eqn_eigenvalue_critical_bounds}
    \frac{1}{C}\, \bbI\ \preceq\  \hbB_{t}^{-1}\ \preceq\ \frac{1}{c}\,\bbI\,.
\end{equation}
We further emphasize that the bounds in \eqref{eqn_eigenvalue_critical_bounds}, or \eqref{claim888} for that matter, limit the conditioning of $\hbB_t^{-1}$ for all realizations of the random samples $\{\tbtheta_t\}_{t=0}^\infty$, irrespective of the particular random draw. Having matrices $\hbB_t^{-1}$ that are strictly positive definite with eigenvalues uniformly upper bounded by $1/c$ leads to the conclusion that if $\hbs(\bbw_{t},\tbtheta_{t})$ is a descent direction, the same holds true of $\hbB_t^{-1}\ \! \hbs(\bbw_{t},\tbtheta_{t})$. The stochastic gradient $\hbs(\bbw_{t},\tbtheta_{t})$ is not a descent direction in general, but we know that this is true for its conditional expectation $\mbE[\hbs(\bbw_{t},\tbtheta_{t}) \given \bbw_{t}] = \nabla F(\bbw_{t})$. Hence, we conclude that $\hbB_t^{-1} \hbs(\bbw_{t},\tbtheta_{t})$ is an average descent direction since $\mbE[\hbB_t^{-1}\hbs(\bbw_{t},\tbtheta_{t})\! \given \! \bbw_{t}] = \hbB_t^{-1}\nabla F(\bbw_{t})$. Stochastic optimization methods whose displacements $\bbw_{t+1}-\bbw_t$ are descent directions on average are expected to approach optimal arguments. We show that this is true of oLBFGS in the following lemma.

%
\begin{lemma}\label{helpful}
Consider the online Limited memory BFGS algorithm as defined by the descent iteration in \eqref {eqn_olbfgs_descent} with matrices $\hbB_t^{-1}= \hbB_{t,\tau}^{-1}$ obtained after $\tau$ recursive applications of the update in \eqref{oLBFGS_update} initialized with $\hbB_{t,0}^{-1} = \hat{\gamma}_{t}\bbI$ and $\hat{\gamma}_{t}$ as given by \eqref{initial_matrix_update_2}. If Assumptions \ref{ass_intantaneous_hessian_bounds} and \ref{ass_bounded_stochastic_gradient_norm} hold true, the sequence of average function values $F(\bbw_{t})$ satisfies
\begin{equation}\label{pedarsag}
  \!  \E{F(\bbw_{t+1})\given \bbw_{t}}  
         \leq F(\bbw_{t})
          -  \frac{\epsilon_{t}}{C}\| \nabla F(\bbw_{t})\|^{2} 
          +\frac{MS^2\epsilon_{t}^{2}}{2c^2}.
\end{equation}
%
\end{lemma}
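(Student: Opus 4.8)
The plan is to establish a stochastic descent inequality by combining the Lipschitz continuity of $\nabla F$ (which follows from the upper Hessian bound $\bbH(\bbw)\preceq M\bbI$ recorded in \eqref{bbb}) with the eigenvalue bounds on $\hbB_t^{-1}$ proved in Lemma \ref{norooz} and summarized in \eqref{eqn_eigenvalue_critical_bounds}. Since the instantaneous Hessians are bounded above, the average Hessian satisfies $\nabla^2 F(\bbw)\preceq M\bbI$, so $F$ admits the standard quadratic upper bound
\begin{equation*}
   F(\bbw_{t+1}) \leq F(\bbw_t) + \nabla F(\bbw_t)^T(\bbw_{t+1}-\bbw_t) + \frac{M}{2}\|\bbw_{t+1}-\bbw_t\|^2.
\end{equation*}
This is the only place smoothness of $F$ is used and it is where the constant $M$ in the statement enters.

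The next step is to substitute the oLBFGS update $\bbw_{t+1}-\bbw_t = -\eps_t\hbB_t^{-1}\hbs(\bbw_t,\tbtheta_t)$ from \eqref{eqn_olbfgs_descent} and take the expectation conditioned on $\bbw_t$. The crucial structural fact I would invoke is that $\hbB_t^{-1}$ is built solely from the curvature pairs $\{\bbv_u,\hbr_u\}_{u=t-\tau}^{t-1}$ and the scaling $\hat\gamma_t$, all of which are determined by the history up to time $t$; hence $\hbB_t^{-1}$ is deterministic given $\bbw_t$ and independent of the fresh samples $\tbtheta_t$. It therefore factors out of the conditional expectation, and the unbiasedness identity $\E{\hbs(\bbw_t,\tbtheta_t)\given\bbw_t}=\nabla F(\bbw_t)$ turns the linear term into $-\eps_t\,\nabla F(\bbw_t)^T\hbB_t^{-1}\nabla F(\bbw_t)$.

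It then remains to bound the two surviving terms. For the linear term I use the lower eigenvalue bound $\hbB_t^{-1}\succeq(1/C)\bbI$ from \eqref{eqn_eigenvalue_critical_bounds} to write $\nabla F(\bbw_t)^T\hbB_t^{-1}\nabla F(\bbw_t)\geq (1/C)\|\nabla F(\bbw_t)\|^2$, which produces the $-(\eps_t/C)\|\nabla F(\bbw_t)\|^2$ contribution. For the quadratic term I use the upper eigenvalue bound $\hbB_t^{-1}\preceq(1/c)\bbI$ to get $\|\hbB_t^{-1}\hbs(\bbw_t,\tbtheta_t)\|^2\leq (1/c^2)\|\hbs(\bbw_t,\tbtheta_t)\|^2$, and then the second-moment bound \eqref{ekhtelaf} of Assumption \ref{ass_bounded_stochastic_gradient_norm} gives $\E{\|\hbB_t^{-1}\hbs(\bbw_t,\tbtheta_t)\|^2\given\bbw_t}\leq S^2/c^2$, yielding the $MS^2\eps_t^2/(2c^2)$ remainder. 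Collecting the three pieces reproduces \eqref{pedarsag}.

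The main obstacle, and the step that deserves the most care, is the conditioning argument: one must be sure that $\hbB_t^{-1}$ may be treated as a fixed matrix under $\E{\cdot\given\bbw_t}$, so that both unbiasedness and the deterministic eigenvalue bounds apply before the expectation is taken. This is precisely why the gradient variation $\hbr_t$ in \eqref{chris} is defined on the common sample set $\tbtheta_t$ rather than using $\tbtheta_{t+1}$: it keeps $\hbB_t^{-1}$ free of the new randomness introduced at step $t$. Once this measurability point is settled, everything else is a routine application of the eigenvalue and moment bounds already in hand.
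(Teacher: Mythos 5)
Your proposal is correct and follows essentially the same route as the paper's proof in Appendix \ref{appx_lemma5}: the quadratic upper bound from $\nabla^2 F(\bbw)\preceq M\bbI$, substitution of the oLBFGS step, conditioning on $\bbw_t$ with $\hbB_t^{-1}$ treated as deterministic so that unbiasedness of the stochastic gradient applies, and then the two eigenvalue bounds $1/C$ and $1/c$ from \eqref{eqn_eigenvalue_critical_bounds} together with Assumption \ref{ass_bounded_stochastic_gradient_norm}. Your added emphasis on why the common-sample definition of $\hbr_t$ in \eqref{chris} keeps $\hbB_t^{-1}$ free of the randomness in $\tbtheta_t$ is a sound observation that the paper states only implicitly.
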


\begin{proof}  See Appendix \ref{appx_lemma5}. \end{proof}

%
Setting aside the term $MS^2\epsilon_{t}^{2}/2c^2$ for the sake of argument, \eqref{pedarsag} defines a supermartingale relationship for the sequence of average functions $F(\bbw_{t})$. This implies that the sequence $\eps_t\| \nabla F(\bbw_{t})\|^{2}/C$ is almost surely summable which, given that the step sizes $\eps_t$ are nonsummable as per \eqref{stepsize_condition}, further implies that the limit infimum $\liminf_{t\to\infty}\|\nabla F(\bbw_{t})\|$ of the gradient norm $\|\nabla F(\bbw_{t})\|$ is almost surely null. This latter observation is equivalent to having $\liminf_{t\to\infty}\| \bbw_{t}-\bbw^{*} \|^{2} =0$ with probability 1 over realizations of the random samples $\{\tbtheta_t\}_{t=0}^\infty$. The term $MS^2\epsilon_{t}^{2}/2c^2$ is a relatively minor nuisance that can be taken care of with a technical argument that we present in the proof of the following theorem.

%
\begin{theorem}\label{convg}
Consider the online Limited memory BFGS algorithm as defined by the descent iteration in \eqref {eqn_olbfgs_descent} with matrices $\hbB_t^{-1}= \hbB_{t,\tau}^{-1}$ obtained after $\tau$ recursive applications of the update in \eqref{oLBFGS_update} initialized with $\hbB_{t,0}^{-1} = \hat{\gamma}_{t}\bbI$ and $\hat{\gamma}_{t}$ as given by \eqref{initial_matrix_update_2}. If Assumptions \ref{ass_intantaneous_hessian_bounds}-\ref{stepsize_ass} hold true the limit infimum of the squared Euclidean distance to optimality $\| \bbw_{t}-\bbw^{*} \|^{2}$ converges to zero almost surely, i.e.,
\begin{equation}\label{eqn_convg}
   \Pr{\liminf_{t \to \infty }\| \bbw_{t}-\bbw^{*} \|^{2} = 0} =1,
\end{equation} 
where the probability is over realizations of the random samples $\{\tbtheta_t\}_{t=0}^\infty$.\end{theorem}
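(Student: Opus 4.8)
The plan is to read the inequality \eqref{pedarsag} from Lemma \ref{helpful} as a near-supermartingale relation and invoke the supermartingale convergence theorem of Robbins and Siegmund. First I would subtract the optimal value $F(\bbw^*)$ from both sides of \eqref{pedarsag} and set $\alpha_t := F(\bbw_t)-F(\bbw^*)\geq 0$, $\beta_t := (\epsilon_t/C)\|\nabla F(\bbw_t)\|^2\geq 0$, and $\delta_t := MS^2\epsilon_t^2/(2c^2)\geq 0$, so that
\begin{equation}
\E{\alpha_{t+1}\given \bbw_t}\ \leq\ \alpha_t - \beta_t + \delta_t .
\end{equation}
The relevant filtration is the one generated by the history of the random samples $\{\tbtheta_u\}_{u<t}$ (equivalently, by the iterates through $\bbw_t$); with respect to it all three sequences are adapted and nonnegative, and $\alpha_t\geq 0$ because $\bbw^*$ is the global minimizer. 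Assumption \ref{stepsize_ass} guarantees $\sum_t \delta_t = (MS^2/2c^2)\sum_t \epsilon_t^2 < \infty$, which is precisely the summability of the perturbation term required to apply the theorem.

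The supermartingale convergence theorem then yields, almost surely, two conclusions: the sequence $\alpha_t=F(\bbw_t)-F(\bbw^*)$ converges to a finite nonnegative limit, and, more importantly, the series $\sum_t \beta_t = (1/C)\sum_t \epsilon_t\|\nabla F(\bbw_t)\|^2$ is finite. This almost sure summability is the precise sense in which the nuisance term $MS^2\epsilon_t^2/2c^2$ is disposed of, as anticipated in the discussion preceding the theorem.

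Next I would convert the summability of $\sum_t \epsilon_t\|\nabla F(\bbw_t)\|^2$ into a statement on the gradient norm using the nonsummability $\sum_t\epsilon_t=\infty$ from \eqref{stepsize_condition}. Arguing by contradiction on the event of positive probability where $\liminf_{t\to\infty}\|\nabla F(\bbw_t)\|^2 = \eta > 0$, there would be a finite time after which $\|\nabla F(\bbw_t)\|^2 \geq \eta/2$, forcing $\sum_t\epsilon_t\|\nabla F(\bbw_t)\|^2 \geq (\eta/2)\sum_t\epsilon_t=\infty$ and contradicting the almost sure summability just established. Hence $\liminf_{t\to\infty}\|\nabla F(\bbw_t)\|^2=0$ with probability one. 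Finally I would translate this into a distance statement via strong convexity: the bound $m\bbI\preceq\bbH(\bbw)$ in \eqref{bbb} gives $\nabla F(\bbw_t)^T(\bbw_t-\bbw^*)\geq m\|\bbw_t-\bbw^*\|^2$, and Cauchy--Schwarz then yields $\|\nabla F(\bbw_t)\|\geq m\|\bbw_t-\bbw^*\|$. Consequently $\liminf_{t\to\infty}\|\nabla F(\bbw_t)\|=0$ forces $\liminf_{t\to\infty}\|\bbw_t-\bbw^*\|^2=0$ almost surely, which is exactly \eqref{eqn_convg}.

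I expect the main obstacle to be the careful invocation of the supermartingale convergence theorem rather than any single computation: one must verify the adaptedness and nonnegativity hypotheses against the correct filtration and handle the perturbation term $\delta_t$ rigorously through $\sum_t\epsilon_t^2<\infty$. Once the almost sure finiteness of $\sum_t\epsilon_t\|\nabla F(\bbw_t)\|^2$ is in hand, both the $\liminf$ contradiction argument and the strong-convexity conversion are routine.
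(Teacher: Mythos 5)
Your proposal is correct and follows essentially the same route as the paper's proof: both turn Lemma \ref{helpful} into a supermartingale statement, deduce the almost sure summability of $\sum_{t}\epsilon_t\|\nabla F(\bbw_t)\|^2$, use the nonsummability of the stepsizes to force $\liminf_{t\to\infty}\|\nabla F(\bbw_t)\|=0$, and finish with strong convexity plus Cauchy--Schwarz to pass from gradients to distances. The only cosmetic difference is that you invoke the Robbins--Siegmund almost-supermartingale theorem directly, while the paper manufactures an exact supermartingale by adding the tail sum $(MS^2/2c^2)\sum_{u\geq t}\epsilon_u^2$ to $F(\bbw_t)$ and then applies the plain supermartingale convergence theorem -- the two devices are interchangeable (the paper's construction is in fact the standard proof of Robbins--Siegmund).
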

}
\begin{proof} See Appendix \ref{appx_theorem_6}. \end{proof}
%

Theorem \ref{convg} establishes convergence of a subsequence of the oLBFGS algorithm summarized in Algorithm \ref{algo_stochastic_bfgs}. The lower and upper bounds on the eigenvalues of $\hbB_t$ derived in Lemma \ref{norooz} play a fundamental role in the proofs of the prerequisite Lemma \ref{helpful} and Theorem \ref{convg} proper. Roughly speaking, the lower bound on the eigenvalues of $\hbB_t$ results in an upper bound on the eigenvalues of $\hbB_t^{-1}$ which limits the effect of random variations on the stochastic gradient $\hbs(\bbw_{t},\tbtheta_{t})$. If this bound does not exist -- as is the case, e.g., of regular stochastic BFGS -- we may observe catastrophic amplification of random variations of the stochastic gradient. The upper bound on the eigenvalues of  $\hbB_t$, which results in a lower bound on the eigenvalues of $\hbB_t^{-1}$, guarantees that the random variations in the curvature estimate $\hbB_t$ do not yield matrices with arbitrarily small norm. If this bound does not hold, it is possible to end up halting progress before convergence as the stochastic gradient is nullified by multiplication with an arbitrarily small eigenvalue.

The result in Theorem \ref{convg} is strong because it holds almost surely over realizations of the random samples $\{\tbtheta_t\}_{t=0}^\infty$ but not stronger than the same convergence guarantees that hold for SGD. We complement the convergence result in Theorem \ref{convg} with a characterization of the expected convergence rate that we introduce in the following theorem.

\begin{theorem}\label{theo_convergence_rate}
Consider the online Limited memory BFGS algorithm as defined by the descent iteration in \eqref {eqn_olbfgs_descent} with matrices $\hbB_t^{-1}= \hbB_{t,\tau}^{-1}$ obtained after $\tau$ recursive applications of the update in \eqref{oLBFGS_update} initialized with $\hbB_{t,0}^{-1} = \hat{\gamma}_{t}\bbI$ and $\hat{\gamma}_{t}$ as given by \eqref{initial_matrix_update_2}. Let Assumptions \ref{ass_intantaneous_hessian_bounds} and \ref{ass_bounded_stochastic_gradient_norm} hold, and further assume that the stepsize sequence is of the form $\eps_t = \eps_0/(t+T_0)$ with the parameters $\eps_0$ and $T_{0}$ satisfying the inequality $2m\eps_{0} T_{0} /C >1$. Then, the difference between the expected optimal objective $\E {F(\bbw_{t})}$ and the optimal objective $F(\bbw^*)$ is bounded as
\begin{equation}\label{eqn_thm_cvg_rate_20}
\E {F(\bbw_{t})}- F(\bbw^*)\ \leq\ \frac{C_{0}}{T_{0}+t}\ ,
\end{equation}
where the constant $C_{0}$ is defined as
\begin{equation}\label{eqn_thm_cvg_rate_30}
 C_{0} := \max \left\{ \frac{\epsilon_{0}^{2}\   T_{0}^{2} C {MS^2}}{{2c^2}(2m \epsilon_{0} T_{0}  - C)}\ ,\ T_{0}\ \!  (F(\bbw_{0}) -\ F(\bbw^*))  \right\} .
\end{equation}
\end{theorem}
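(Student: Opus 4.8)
The plan is to collapse the per-step descent estimate of Lemma \ref{helpful} into a deterministic scalar recursion for the expected suboptimality $\delta_t := \E{F(\bbw_t)} - F(\bbw^*)$, and then close the bound $\delta_t \leq C_0/(T_0+t)$ by induction on $t$. The two ingredients that turn the supermartingale-type inequality \eqref{pedarsag} into a tractable recursion are strong convexity of $F$ (to convert the gradient-norm term into a multiple of $\delta_t$) and the explicit step-size schedule (to give the recursion coefficients that decay in $t$ in a controlled way).

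First I would record the consequence of strong convexity implied by the lower Hessian bound $m\bbI \preceq \bbH(\bbw)$ in \eqref{bbb}: minimizing the quadratic lower bound $F(\bby) \geq F(\bbw) + \nabla F(\bbw)^T(\bby-\bbw) + (m/2)\|\bby-\bbw\|^2$ over $\bby$ and evaluating at the minimizer $\bbw^*$ gives the standard strong-convexity (Polyak--Lojasiewicz) inequality
\begin{equation}
   \|\nabla F(\bbw)\|^2 \ \geq\ 2m\,\big(F(\bbw) - F(\bbw^*)\big).
\end{equation}
Next I would take total expectations in \eqref{pedarsag} via the tower property, substitute this bound on $\E{\|\nabla F(\bbw_t)\|^2}$, and subtract $F(\bbw^*)$ from both sides to obtain
\begin{equation}
   \delta_{t+1} \ \leq\ \Big(1 - \frac{2m\epsilon_t}{C}\Big)\,\delta_t + \frac{MS^2\epsilon_t^2}{2c^2}.
\end{equation}
Writing $s := t + T_0$ and inserting the prescribed schedule, the contraction term takes the form $a/s$ for a constant $a$ and the noise term the form $b/s^2$ with $b = MS^2(\epsilon_0 T_0)^2/(2c^2)$, so the recursion becomes $\delta_{t+1} \leq (1 - a/s)\,\delta_t + b/s^2$. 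I would then prove $\delta_t \leq C_0/s$ by induction: the base case $t=0$ holds because the second entry of the maximum in \eqref{eqn_thm_cvg_rate_30} forces $C_0 \geq T_0\delta_0$, and the inductive step, after plugging $\delta_t \leq C_0/s$ into the recursion, reduces to verifying the scalar inequality $C_0 s/(s+1) \leq aC_0 - b$.

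The main obstacle is this last algebraic inequality, and specifically making it hold \emph{uniformly} in $t$ rather than only at the base index. Rearranged it reads $C_0\big(a - s/(s+1)\big) \geq b$, and since $s/(s+1)$ increases to $1$ the binding case is $s \to \infty$, which demands $C_0 \geq b/(a-1)$ and hence forces the effective coefficient to satisfy $a > 1$. This is exactly where the hypothesis $2m\epsilon_0 T_0/C > 1$ is used: one must track the step-size normalization carefully so that the effective contraction coefficient comes out as $a = 2m\epsilon_0 T_0/C$, whereupon the hypothesis is precisely $a>1$ and a short computation shows the first entry $C_0^{(1)}$ of the maximum in \eqref{eqn_thm_cvg_rate_30} equals exactly $b/(a-1)$, so that $C_0 \geq C_0^{(1)}$ closes the inductive inequality for every $s \geq T_0$. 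I would flag that if the coefficient were allowed to be $\leq 1$ the induction genuinely fails for large $t$, the noise term $b/s^2$ dominates near the optimum, and only the slower rate $O(1/t^{a})$ is obtainable --- which is exactly the degenerate regime the hypothesis is designed to exclude.
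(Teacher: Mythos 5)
Your proposal is correct and follows essentially the same route as the paper: the strong-convexity (PL) bound $\|\nabla F(\bbw_t)\|^2 \geq 2m\,(F(\bbw_t)-F(\bbw^*))$ converts \eqref{pedarsag} into the recursion $\delta_{t+1}\leq(1-2m\epsilon_t/C)\delta_t + MS^2\epsilon_t^2/(2c^2)$, and the $O(1/t)$ bound is closed by induction with $a=2m\epsilon_0T_0/C$, $b=\epsilon_0^2T_0^2MS^2/(2c^2)$, exactly as in Appendix \ref{appx_theorem_7}. The only difference is cosmetic: the paper packages the induction as a standalone lemma (Lemma \ref{lecce22}, closed via a difference-of-squares step), whereas you run it inline with the equivalent rearrangement $C_0\,s/(s+1)\leq aC_0-b$ and a monotonicity argument, both hinging on the same sufficient condition $C_0\geq b/(a-1)$ guaranteed by the hypothesis $a>1$.
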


\begin{proof} See Appendix \ref{appx_theorem_7}. \end{proof}

Theorem \ref{theo_convergence_rate} shows that under specified assumptions the expected error  in terms of the objective value after $t$ oLBFGS iterations is of order $O(1/t)$. As is the case of Theorem \ref{convg}, this result is not better than the convergence rate of conventional SGD. As can be seen in the proof of Theorem \ref{theo_convergence_rate}, the convergence rate is dominated by the noise term introduced by the difference between stochastic and regular gradients. This noise term would be present even if exact Hessians were available and in that sense the best that can be proven of oLBFGS is that the convergence rate is not worse than that of SGD. Given that theorems \ref{convg} and \ref{theo_convergence_rate} parallel the theoretical guarantees of SGD it is perhaps fairer to describe oLBFGS as an adaptive reconditioning strategy instead of a stochastic quasi-Newton method. The latter description refers to the genesis of the algorithm, but the former is more accurate description of its behavior. Do notice that while the convergence rate doesn't change, improvements in convergence time are significant as we illustrate with the numerical experiments that we present in the next two sections.


\section{Support vector machines} \label{sec:SVMproblem}

Given a training set with points whose classes are known the goal of an SVM is to find a hyperplane that best separates the training set. Let $\ccalS = \{ (\bbx_{i},y_{i}) \}_{i=1}^{N}$ be a training set containing $N$ pairs of the form $(\bbx_{i},y_i)$, where $\bbx_{i}\in\reals^n$ is a feature vector and $y_{i}\in \{-1,1 \}$ is the corresponding class. The goal is to find a hyperplane supported by a vector $\bbw\in\reals^n$ which separates the training set so that $\bbw^T\bbx_i>0$ for all points with $y_i=1$ and $\bbw^T\bbx_i<0$ for all points with $y_i=-1$. A loss function $l((\bbx,y);\bbw)$ defines a measure of distance between the point $\bbx_i$ and the hyperplane supported by $\bbw$. We then select the hyperplane supporting vector as
\begin{equation}\label{SVM}\vspace{-3pt}
   \bbw^* := \argmin_{\bbw}\   \frac{\lambda}{2}\|\bbw\|^2 
                      + \frac{1}{N} \sum_{i=1}^{N} l((\bbx_{i},y_{i});\bbw),
\end{equation}
where we have also added the regularization term ${\lambda}\|\bbw\|^2 /{2} $ for some constant $\lambda>0$. Common selections for the loss function are the hinge loss $l((\bbx,y);\bbw)=\max(0,1-y(\bbw^{T}\bbx))$ and the squared hinge loss $l((\bbx,y);\bbw)=\max(0,1-y(\bbw^{T}\bbx))^{2}$. See, e.g., \cite{Bottou}. To model \eqref{SVM} as a problem in the form of \eqref{optimization_problem}, define $\bbtheta_{i}=(\bbx_{i},y_{i})$ as a given training point and the probability distribution of $\theta$ as uniform on the training set $\ccalS = \{ (\bbx_{i},y_{i}) \}_{i=1}^{N}= \{ \bbtheta_{i} \}_{i=1}^{N}$. It then suffices to define 
\begin{equation}\label{eqn_svn_reformulation_random_functions}
f(\bbw,\bbtheta)  =\ f(\bbw,(\bbx,y))\ 
                 :=\ \frac{\lambda}{2}\|\bbw\|^2 + l((\bbx,y);\bbw),
\end{equation}
as sample functions to see that the objective in \eqref{SVM} can be written as the average $F(\bbw) = \mbE_{\bbtheta}[f(\bbw,{\bbtheta})]$ as in \eqref{optimization_problem}. We can then use SGD, oBFGS, RES, and oLBFGS to find the optimal classifier $\bbw^*$. There are also several algorithms that accelerate SGD through the use of memory. These algorithms reduce execution times because they reduce randomness, not because they improve curvature, but are nonetheless alternatives to oLBFGS. We further add Stochastic Average Gradient (SAG) to the comparison set. SAG is a variant of SGD that uses an average of stochastic gradients as a descent direction (\cite{Bach}). The performances of other SGD algorithms with memory are similar to SAG. For these five algorithms we want to compare achieved objective values with respect to the number of feature vectors processed (Section \ref{sec:SVM2}) as well as with respect to processing times (Section \ref{sec:SVM3}).

%

\begin{figure}[t]{
   \subfigure[oLBFGS]{\label{fig:obfgs_100}%
   \includegraphics[width=0.49\linewidth]{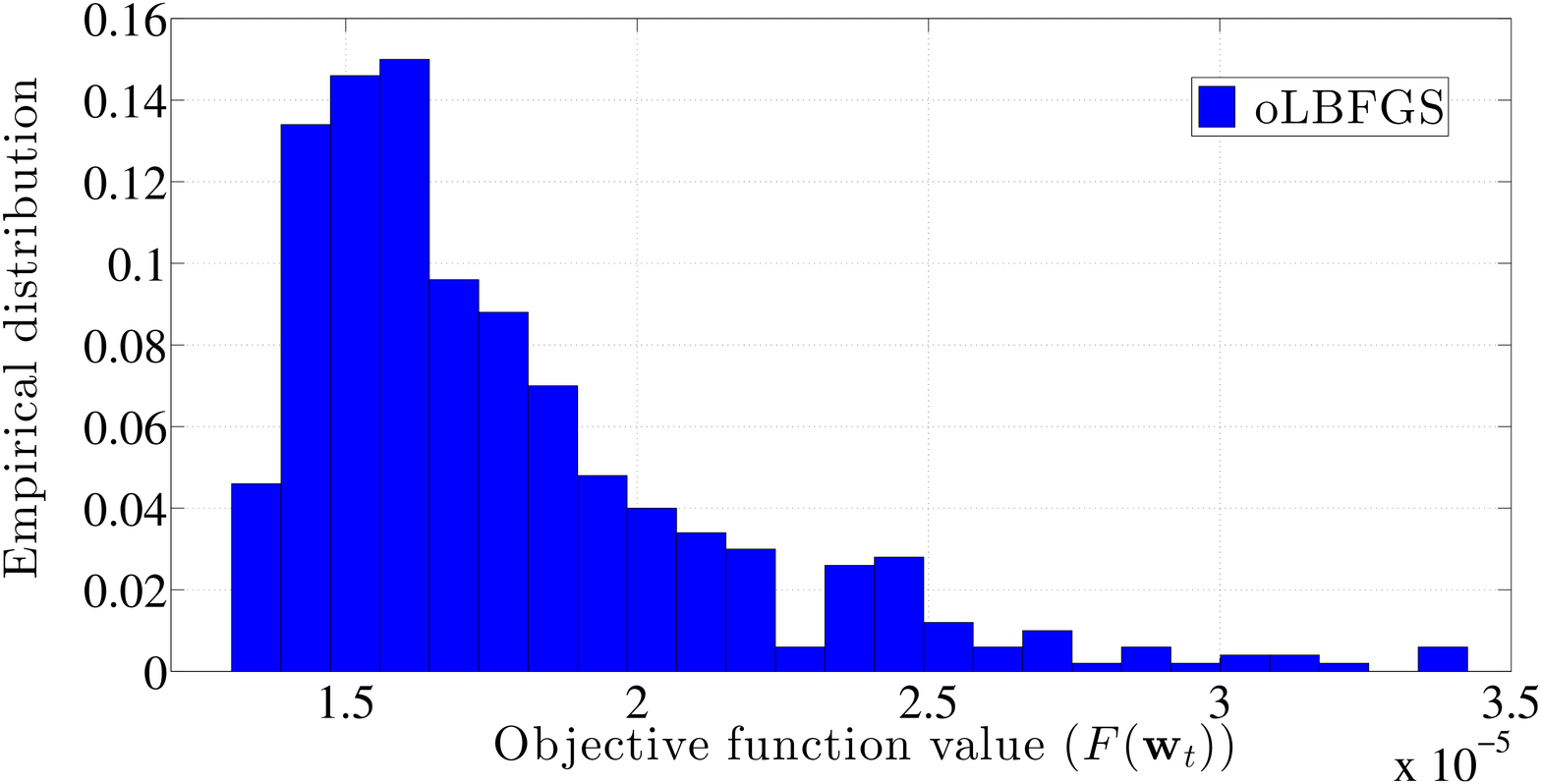}}
   \subfigure[oBFGS]{\label{fig:olbfgs_100}%
   \includegraphics[width=0.49\linewidth]{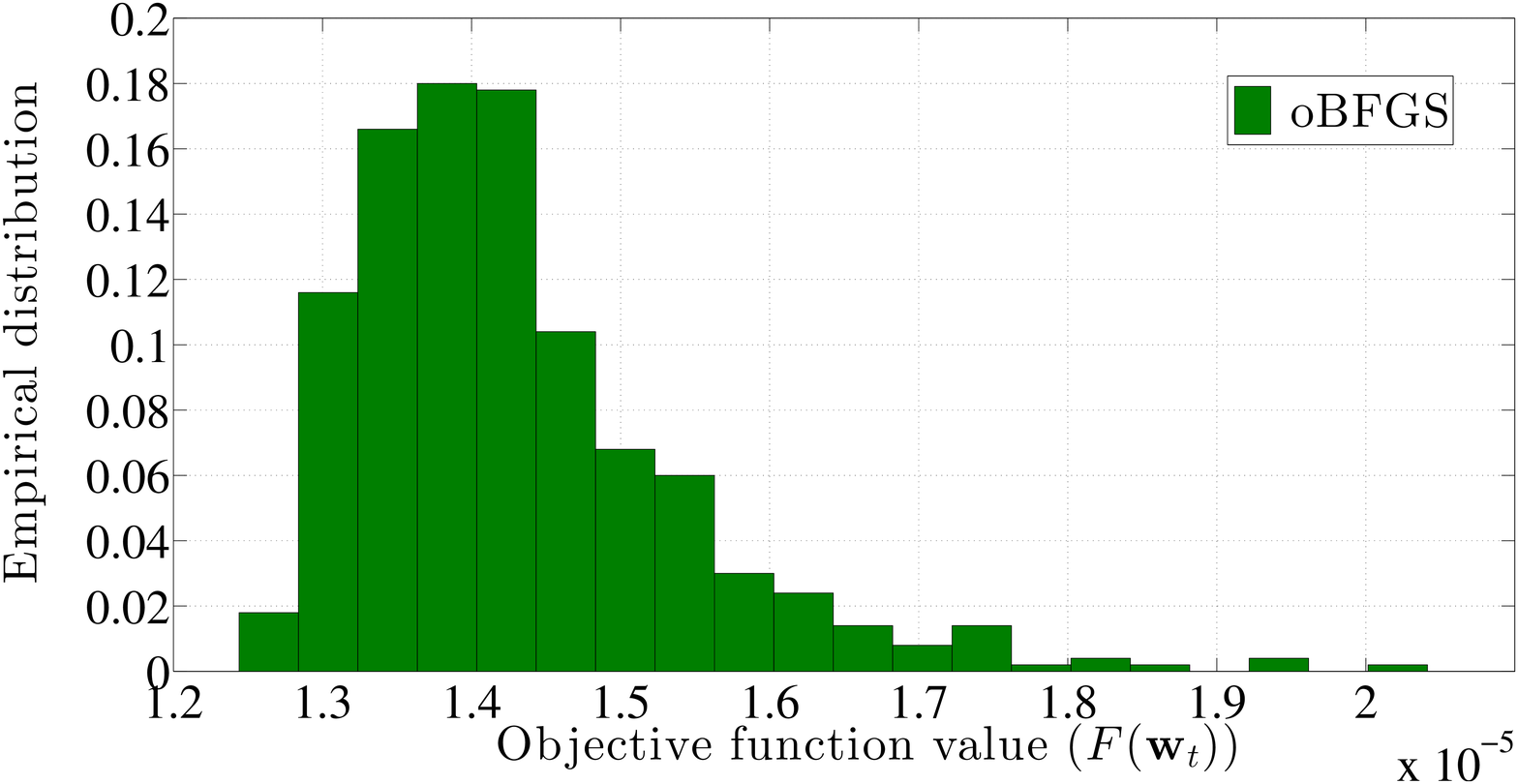}}
   \subfigure[RES]{\label{fig:res_100}%
   \includegraphics[width=0.49\linewidth]{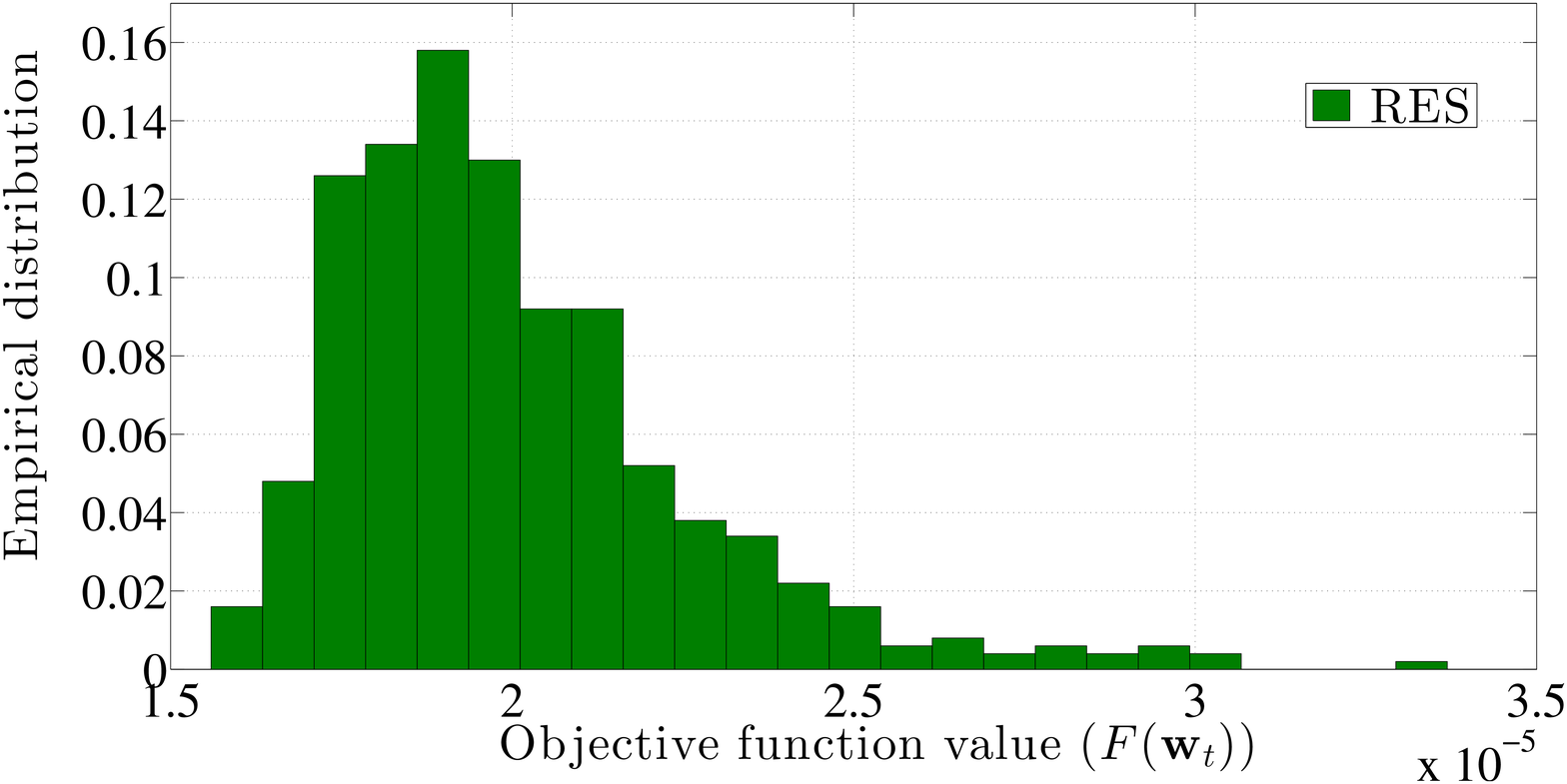}}%
   \subfigure[SGD]{\label{fig:sgd_100}%
   \includegraphics[width=0.49\linewidth]{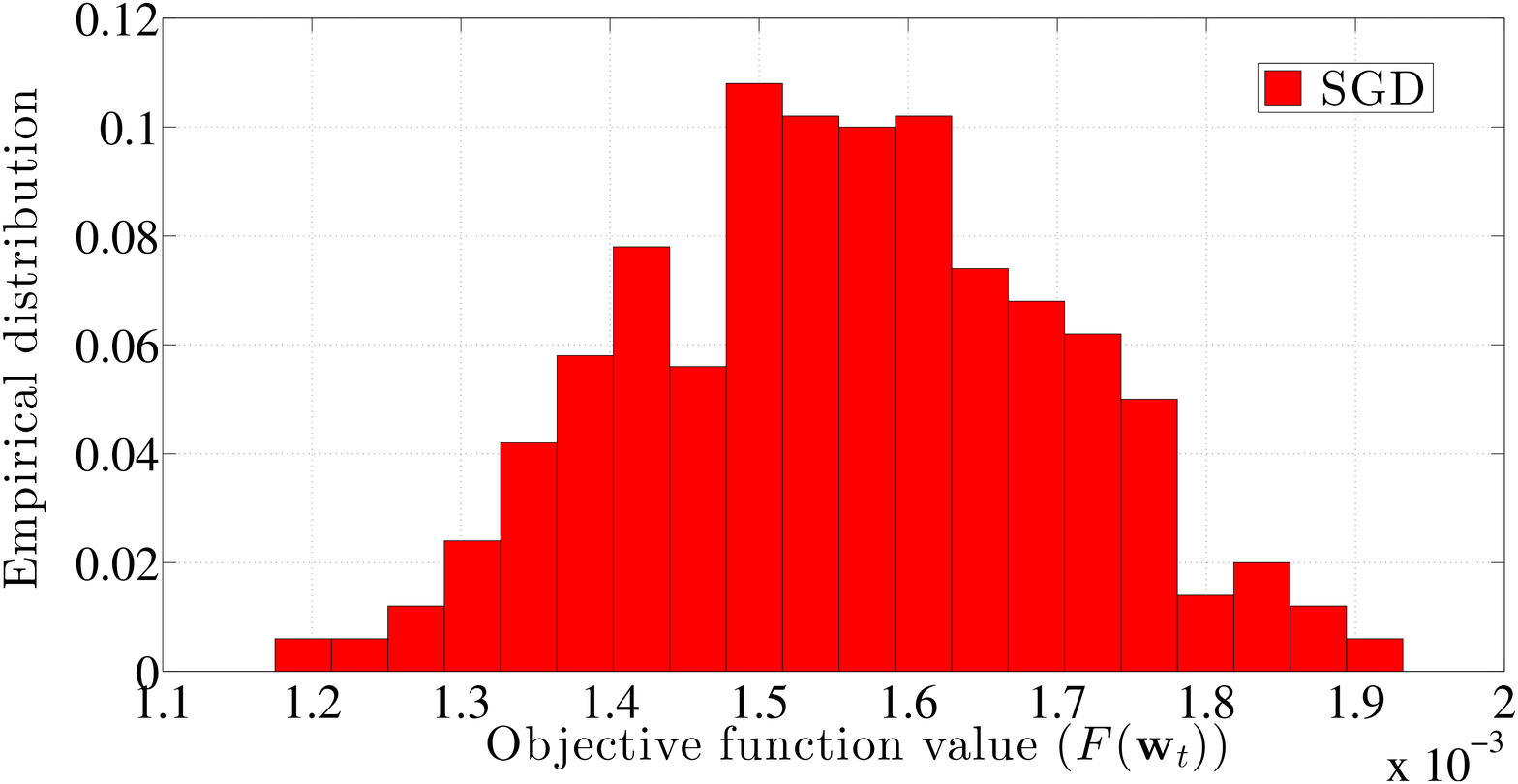}}
   \subfigure[SAG]{\label{fig:sag_100}%
   \includegraphics[width=0.49\linewidth]{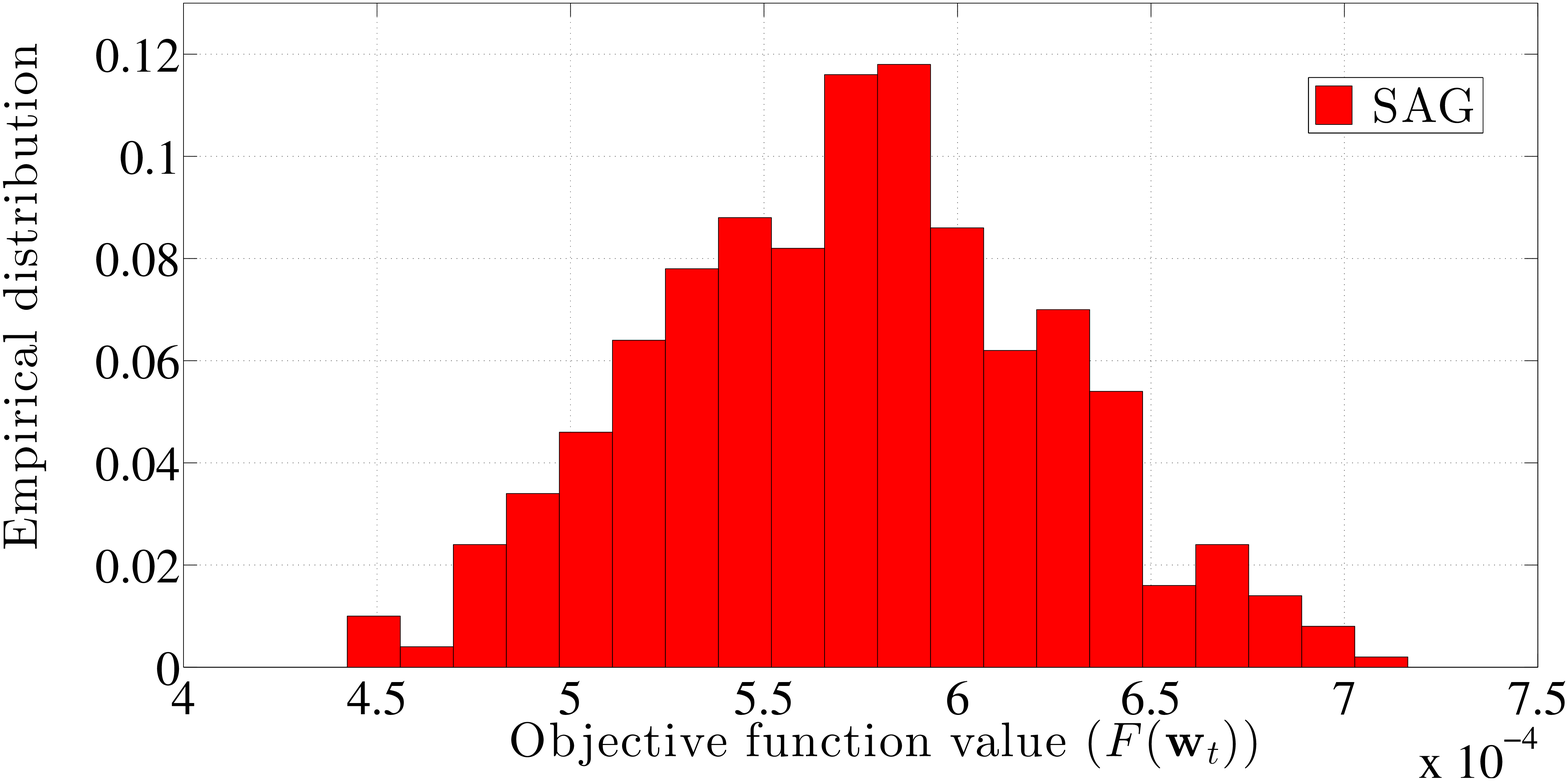}}
   \subfigure[Statistics]{\label{fig:sag_100}{\footnotesize 
   \renewcommand\arraystretch{1.3}  \vspace{-40mm}
   \begin{tabular}{ l l l  l  l }
      \vspace{-40mm}      \\ \hline
                && \multicolumn{3}{c}{Objective function value}\\
      Algorithm && Minimum & Average & Maximum  \\ \hline
      oLBFGS     && 1.3$\times10^{-5}$  & 1.7$\times10^{-5}$ & 3.4$\times10^{-5}$  \\ 
      oBFGS     &&  1.2$\times10^{-5}$  & 1.4$\times10^{-5}$  & 2.0$\times10^{-5}$  \\ 
      RES    && 1.5$\times10^{-5}$    & 1.9$\times10^{-5}$    &  3.3$\times10^{-5}$  \\ 
      SGD   && 1.2$\times10^{-3}$ & 1.6$\times10^{-3}$  & 1.9$\times10^{-3}$   \\ 
      SAG   &&  4.4$\times10^{-4}$  & 5.7$\times10^{-4}$ & 7.1$\times10^{-4}$       \\ \hline
   \end{tabular}}}}
\caption{Histograms of objective function value $F(\bbw_{t})$ after processing $Lt=4\times10^4$ feature vectors for $n=10^2$. The values of objective function for oLBFGS, oBFGS and RES are close to each other and smaller than the objective function values for SAG and SGD. 
}
\label{fig:obj_func_100}\end{figure}

%
\subsection{Convergence versus number of feature vectors processed}\label{sec:SVM2}

For numerical tests we use the squared hinge loss $l((\bbx,y);\bbw)=\max(0,1-y(\bbx^{T}\bbw))^{2}$ in \eqref{SVM}. The training set $\ccalS = \{ (\bbx_{i},y_{i}) \}_{i=1}^{N}$ contains $N=10^4$ feature vectors, half of which belong to the class $y_i=-1$ with the other half belonging to the class $y_i=1$. For the class $y_i=-1$ each of the $n$ components of each of the feature vectors $\bbx_i\in\reals^n$ is chosen uniformly at random from the interval $[-0.8,0.2]$. Likewise, each of the $n$ components of each of the feature vectors $\bbx_i\in\reals^n$ is chosen uniformly at random from the interval $[-0.2,0.8]$ for the class $y_i=1$. In all of our numerical experiments the parameter $\lambda$ in \eqref{SVM} is set to $\lambda=10^{-4}$. In order to study the advantages of oLBFGS we consider two different cases where the dimensions of the feature vectors are $n=10^2$ and $n=10^3$. The size of memory for oLBFGS is set to $\tau=10$ in both cases. For SGD and SAG the sample size in \eqref{stochastic_gradient} is $L=1$ and for RES, oBFGS and oLBFGS is $L=5$.
In all tests, the number of feature vectors processed is represented by the product $Lt$ between the iteration index and the sample size used to compute stochastic gradients. This is done because the sample sizes are different. For all five algorithms we use a decreasing stepsize sequence of the form $\epsilon_t=\epsilon_{0}T_{0} / (T_{0}+t)$. We report results for $\epsilon_{0} = 2\times10^{-2}$ and $T_{0}=10^2$ for RES, oLBFGS and oBFGS, which are the values that yield best average performance after processing $4\times10^4$ feature vectors. Further improvements can be obtained by tuning stepsize parameters individually for each individual algorithm and feature dimension $n$. Since these improvements are minor we report  common parameters for easier reproducibility. For SGD and SAG, whose performance is more variable, we tune the various parameters individually for each dimension $n$ and report results for the combination that yields best average performance after processing $4\times10^4$ feature vectors. 

%
\begin{figure}[t]{
   \subfigure[oLBFGS]{\label{fig:obfgs_1000}%
   \includegraphics[width=0.49\linewidth]{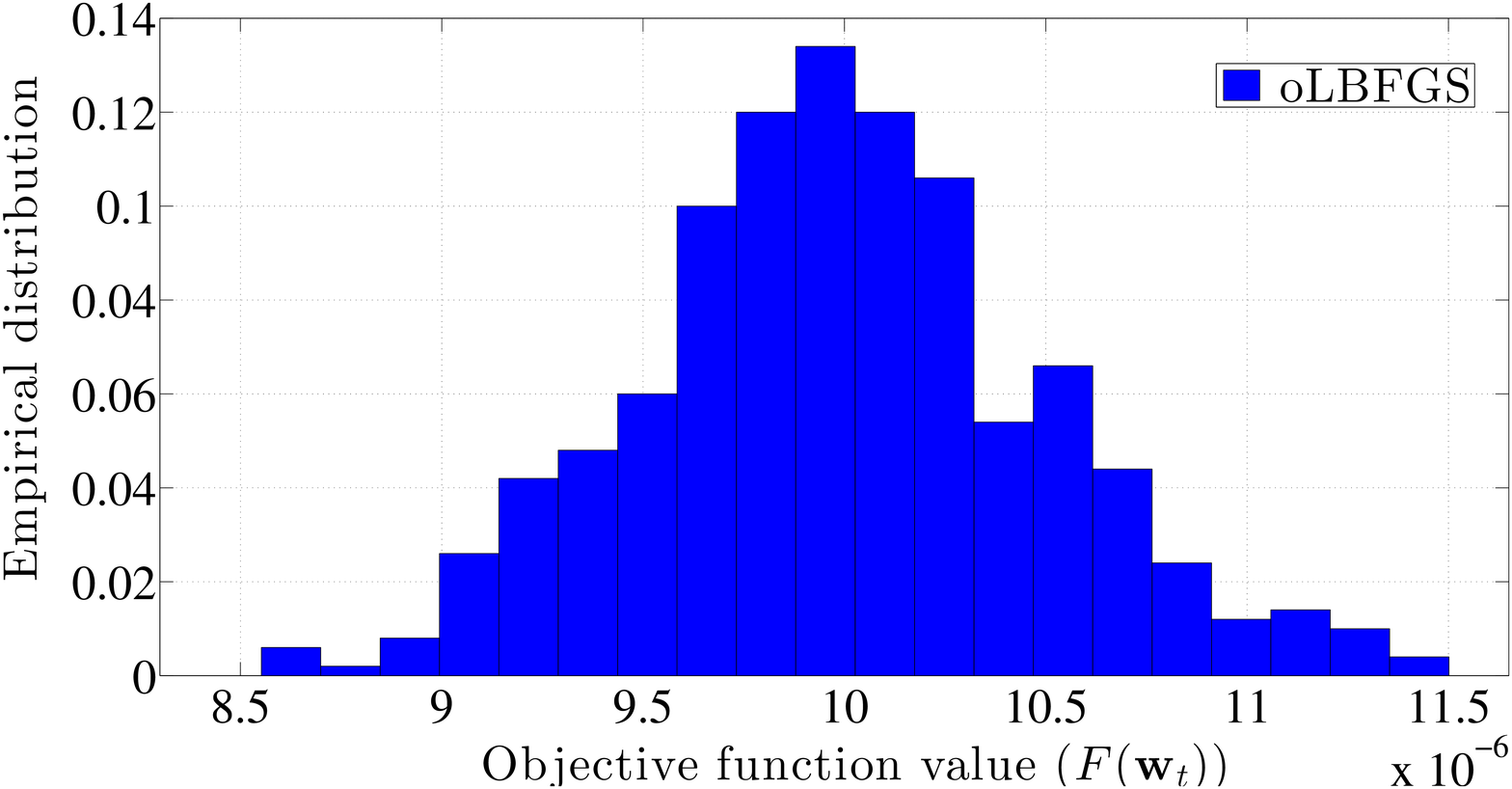}}%
   \subfigure[oBFGS]{\label{fig:olbfgs_1000}%
   \includegraphics[width=0.49\linewidth]{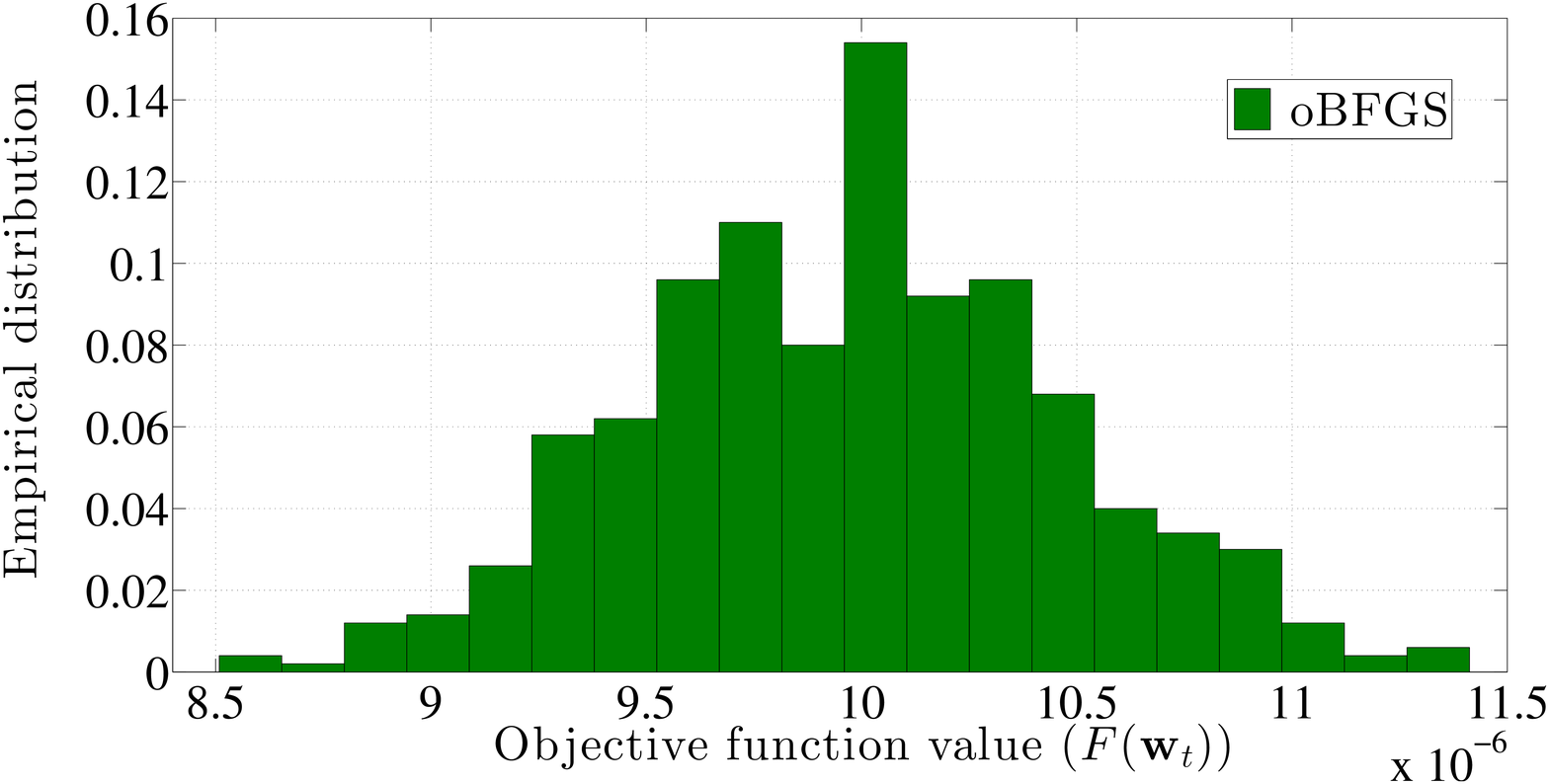}}
   \subfigure[RES]{\label{fig:res_1000}%
   \includegraphics[width=0.49\linewidth]{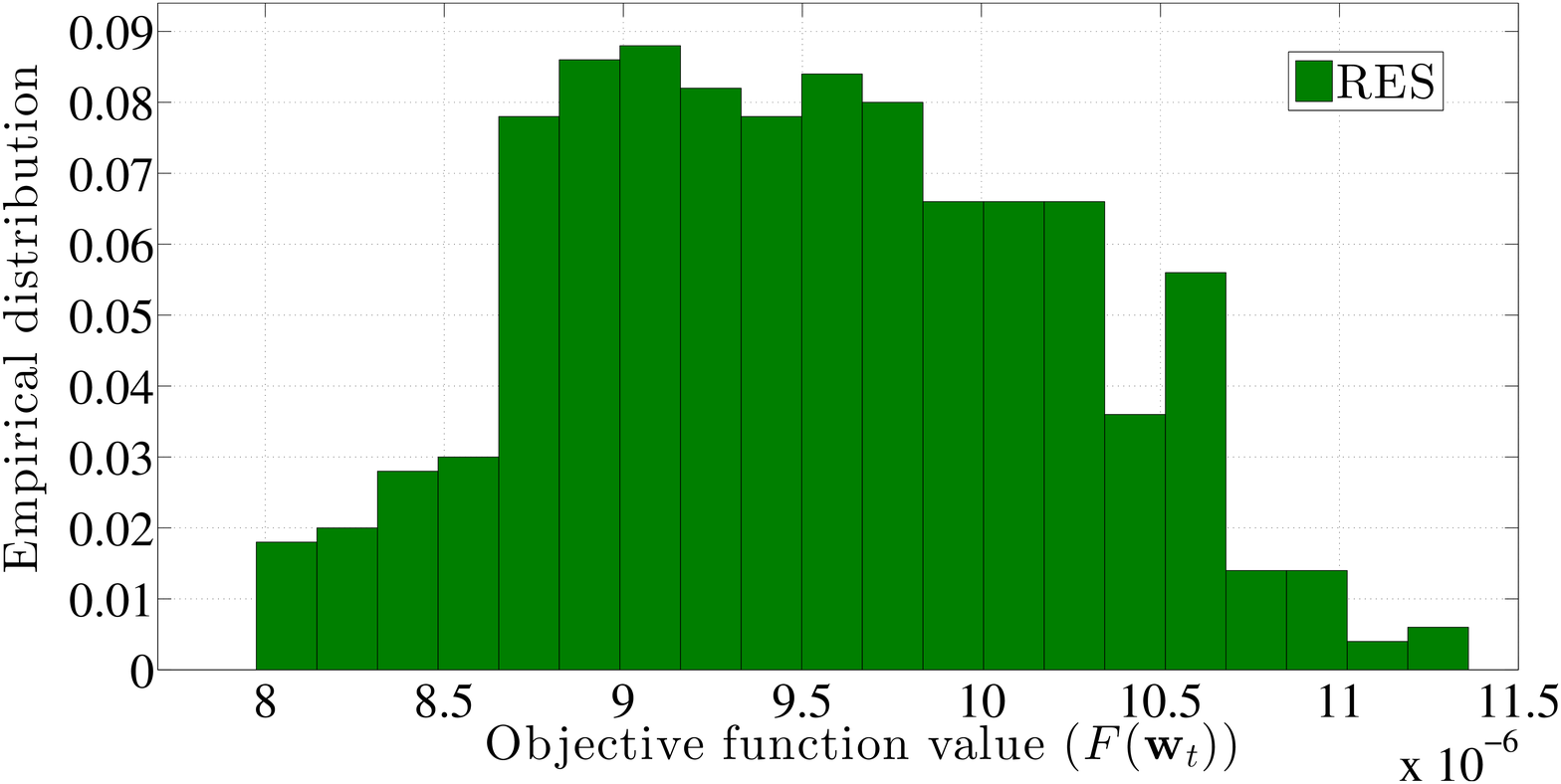}}%
   \subfigure[SGD]{\label{fig:sgd_1000}%
   \includegraphics[width=0.49\linewidth]{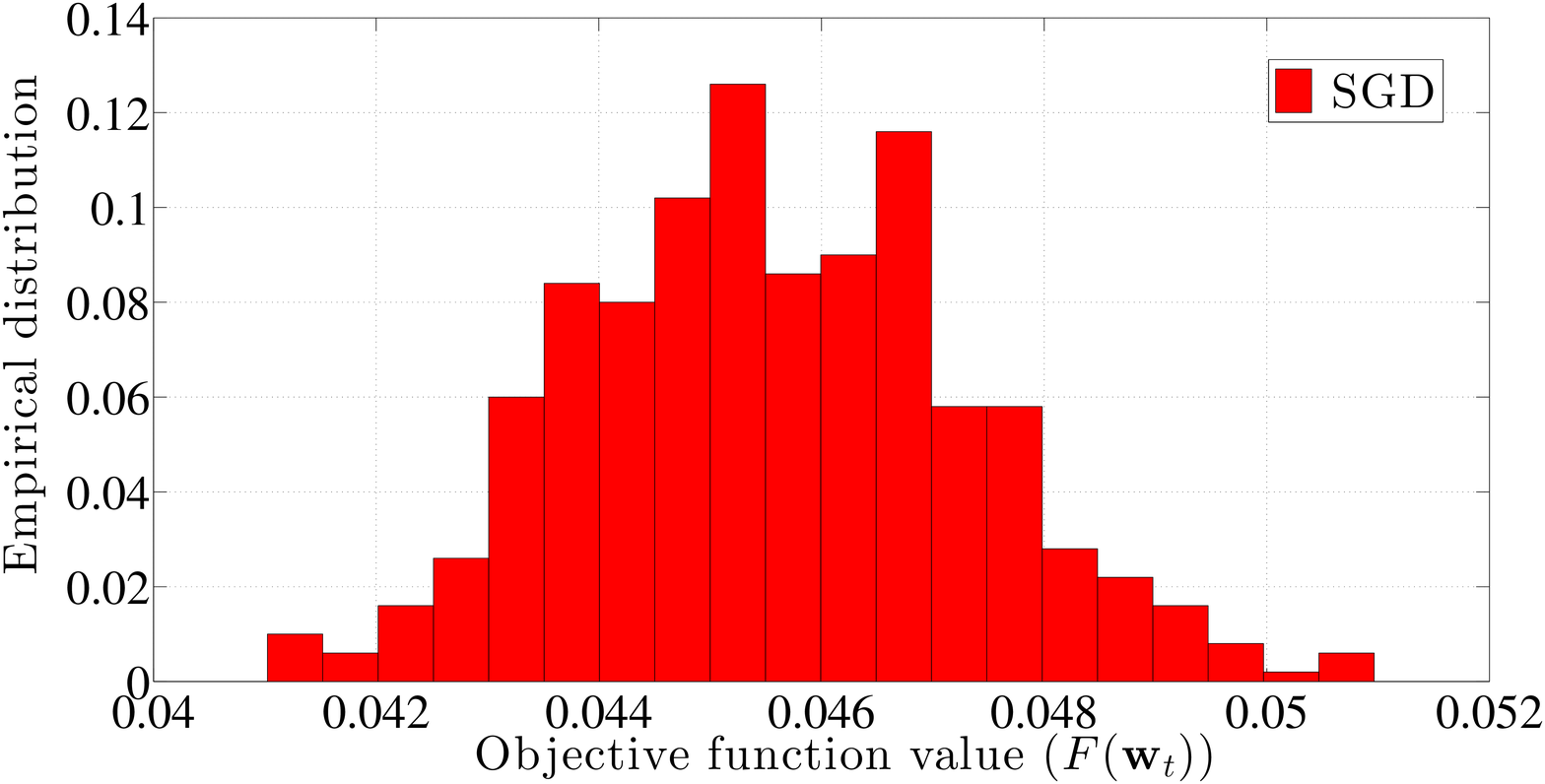}}
   \subfigure[SAG]{\label{fig:sag_1000}%
   \includegraphics[width=0.49\linewidth]{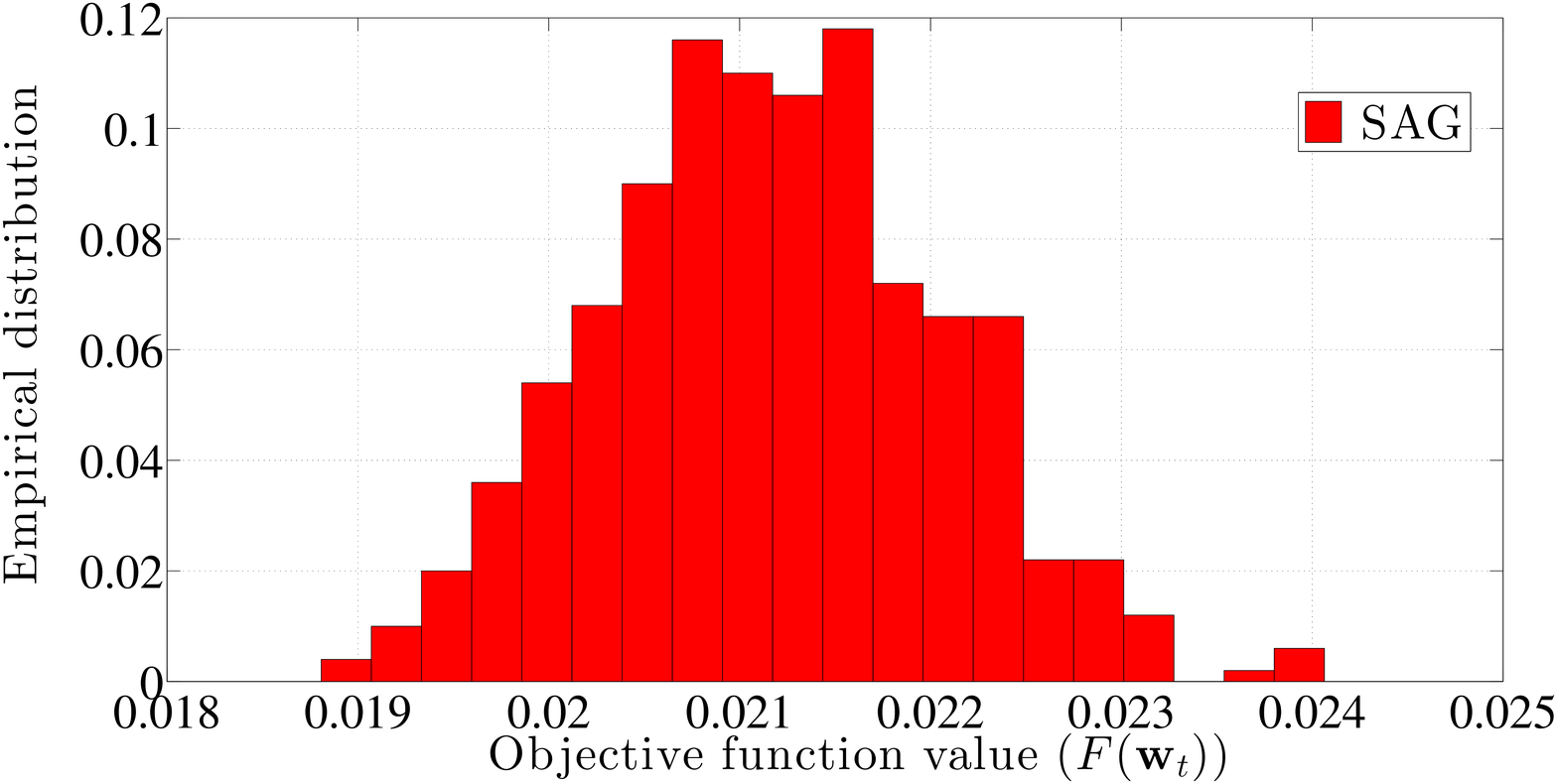}}
   \subfigure[Statistics]{\label{fig:sag_100}{\footnotesize 
   \renewcommand\arraystretch{1.3}  \vspace{-40mm}
   \begin{tabular}{ l l l  l  l }
      \vspace{-40mm}      \\ \hline
                && \multicolumn{3}{c}{Objective function value}\\
      Algorithm && Minimum & Average & Maximum  \\ \hline
      oLBFGS   && 8.6$\times10^{-6}$ & 9.9$\times10^{-6}$ & 11.5$\times10^{-6}$\\ 
      oBFGS  && 8.5$\times10^{-6}$ & 9.8$\times10^{-6}$ & 11.4$\times10^{-6}$\\
      RES   && 7.9$\times10^{-6}$ & 9.5$\times10^{-6}$ & 11.3$\times10^{-6}$\\
      SGD    && 4.1$\times10^{-2}$ & 4.5$\times10^{-2}$ & 5.1$\times10^{-2}$\\
      SAG  && 1.9$\times10^{-2}$ & 2.1$\times10^{-2}$ & 2.4$\times10^{-2}$ \\ \hline
   \end{tabular}}}}
\caption{Histograms of objective function value $F(\bbw_{t})$ after processing $Lt=4\times10^4$ feature vectors for $n=10^3$. The values of objective function for oBFGS, oLBFGS and RES are close to each other and smaller than the objective function values for SAG and SGD. }
\label{fig:obj_func_1000}\end{figure}

%
\begin{figure}[t]{
   \subfigure[oLBFGS]{\label{fig:obfgs_100_2}%
   \includegraphics[width=0.49\linewidth]{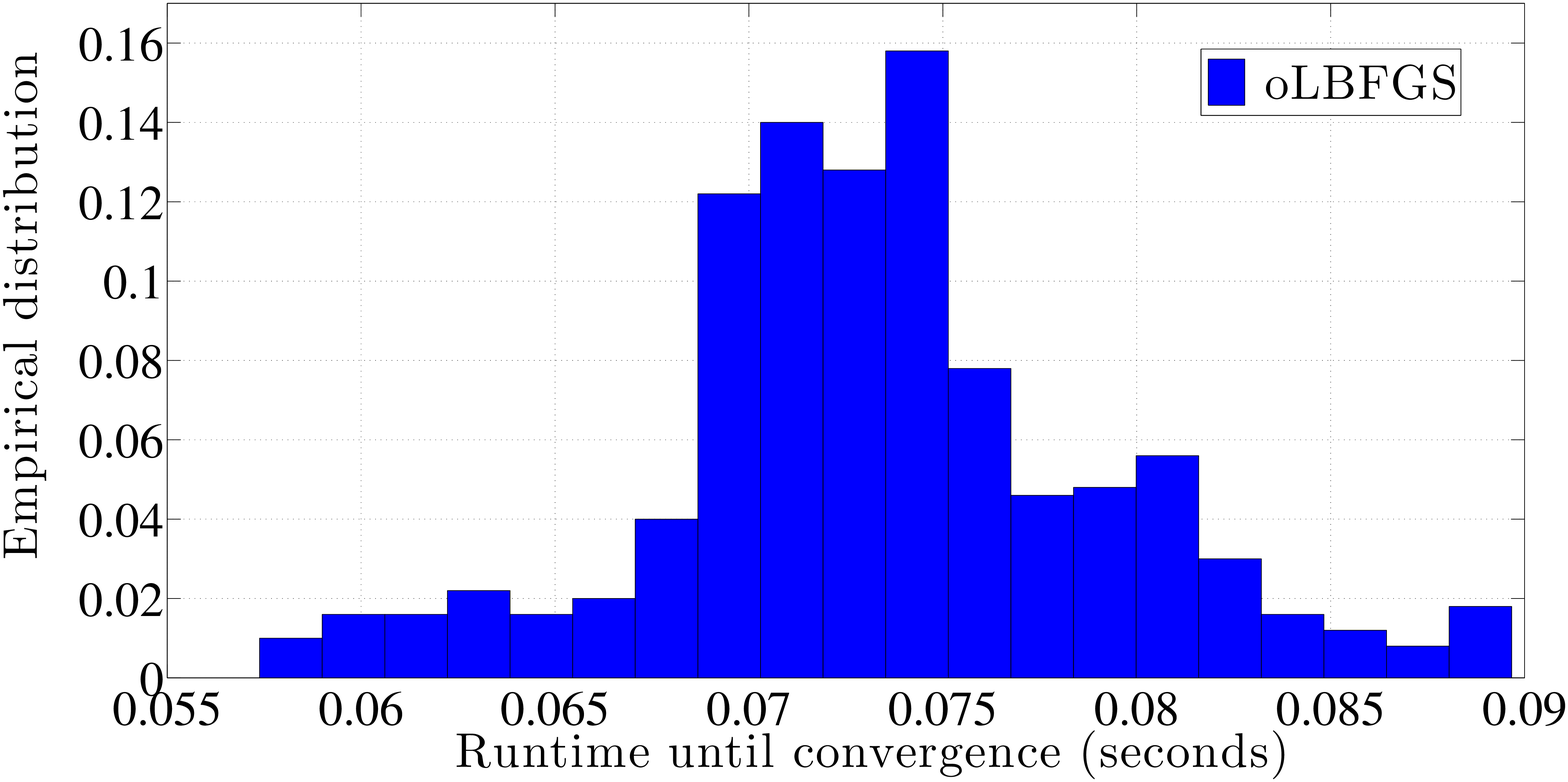}}%
   \subfigure[oBFGS]{\label{fig:olbfgs_100_2}%
   \includegraphics[width=0.49\linewidth]{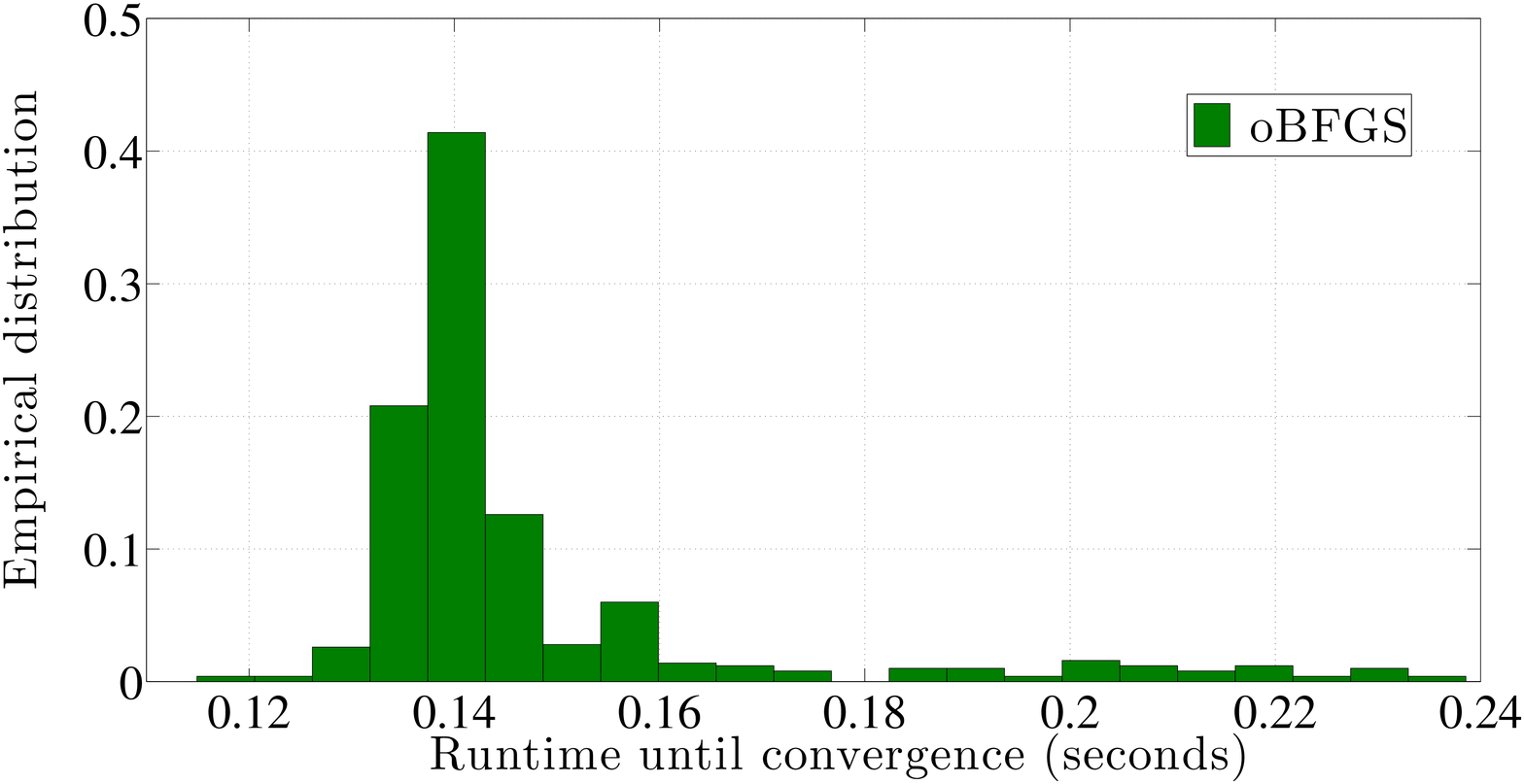}}
   \subfigure[RES]{\label{fig:res_100_2}%
   \includegraphics[width=0.49\linewidth]{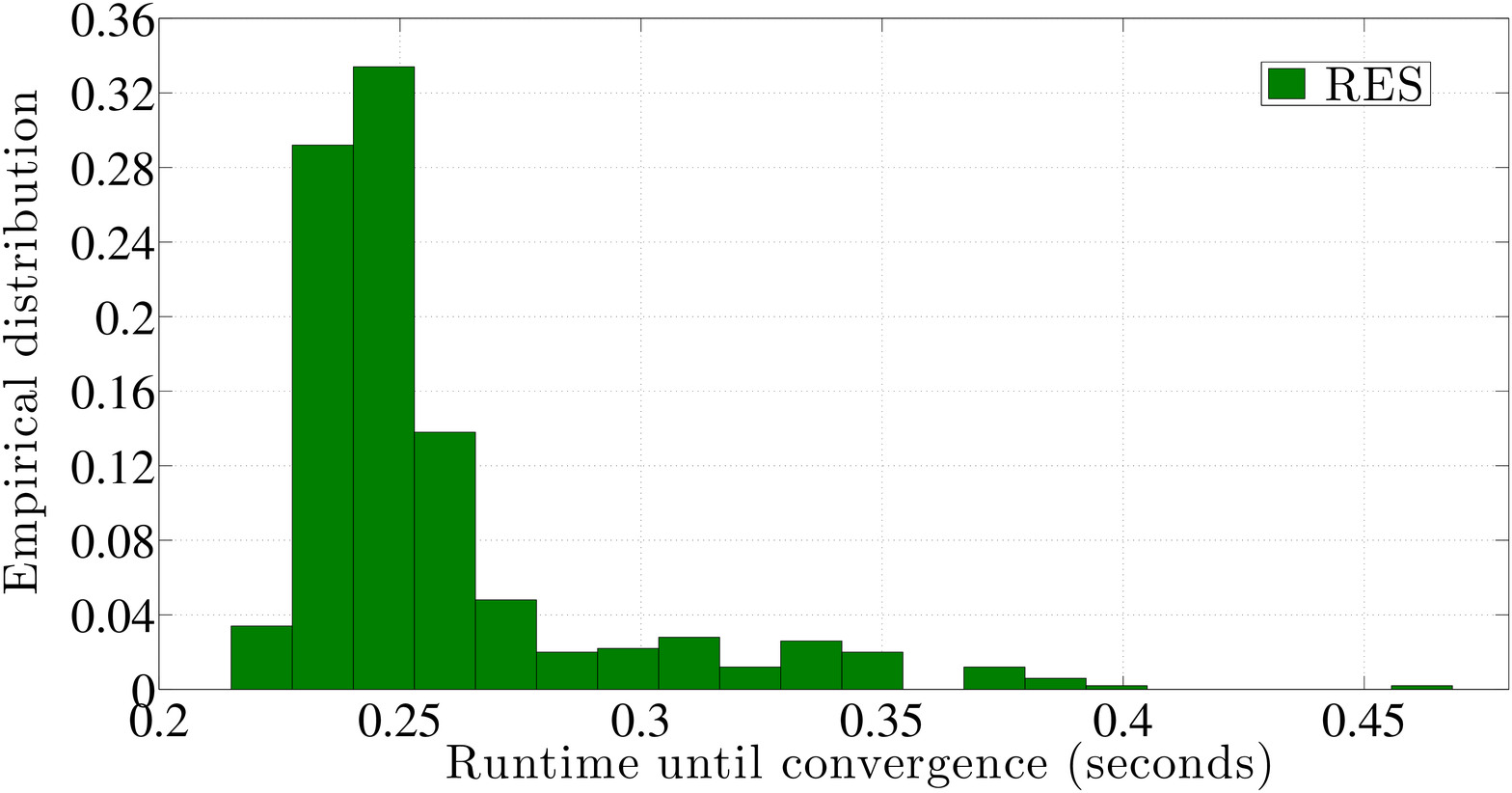}}%
   \subfigure[SGD]{\label{fig:sgd_100_2}%
   \includegraphics[width=0.49\linewidth]{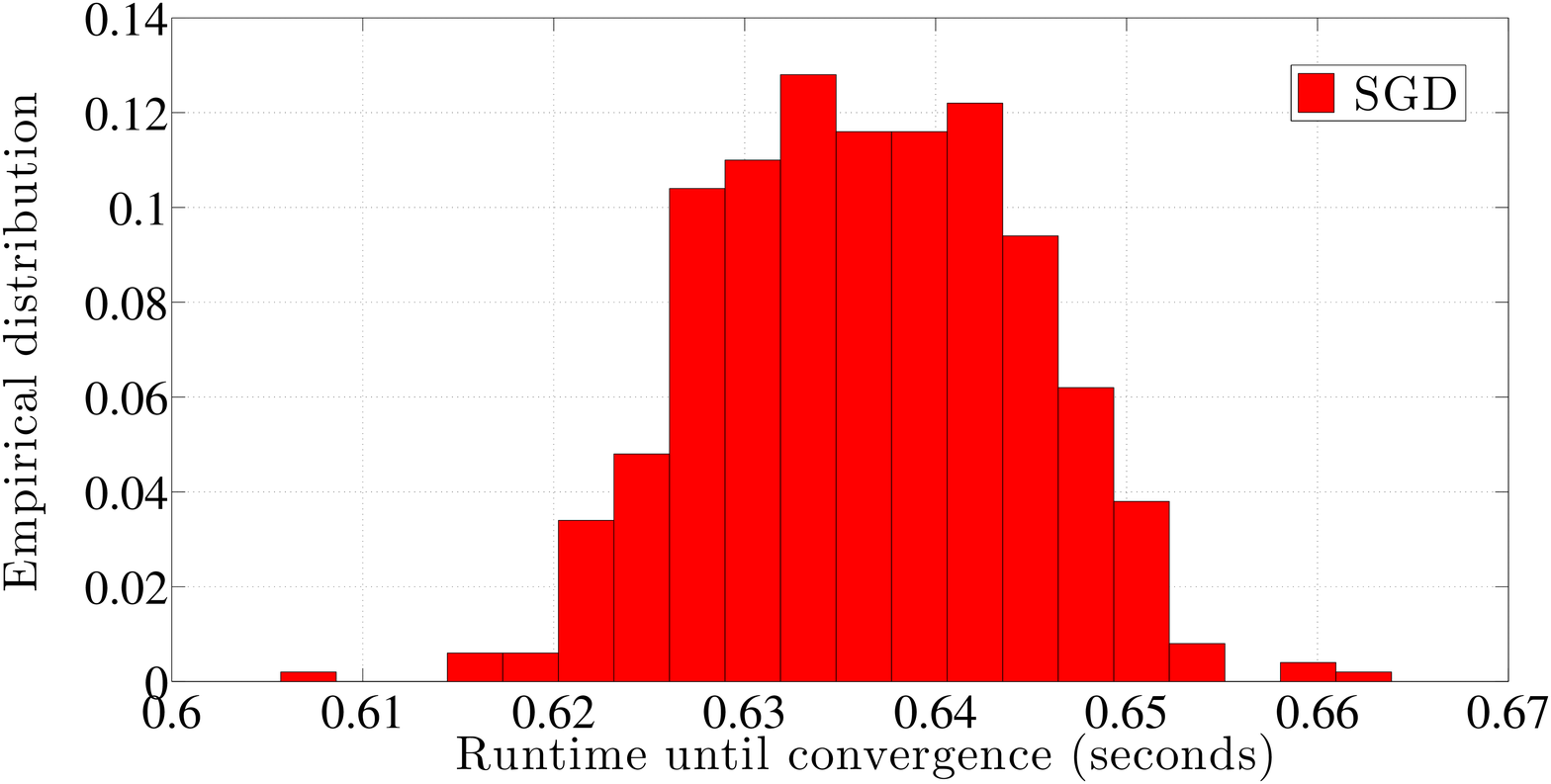}}
   \subfigure[SAG]{\label{fig:sag_100_2}%
   \includegraphics[width=0.49\linewidth]{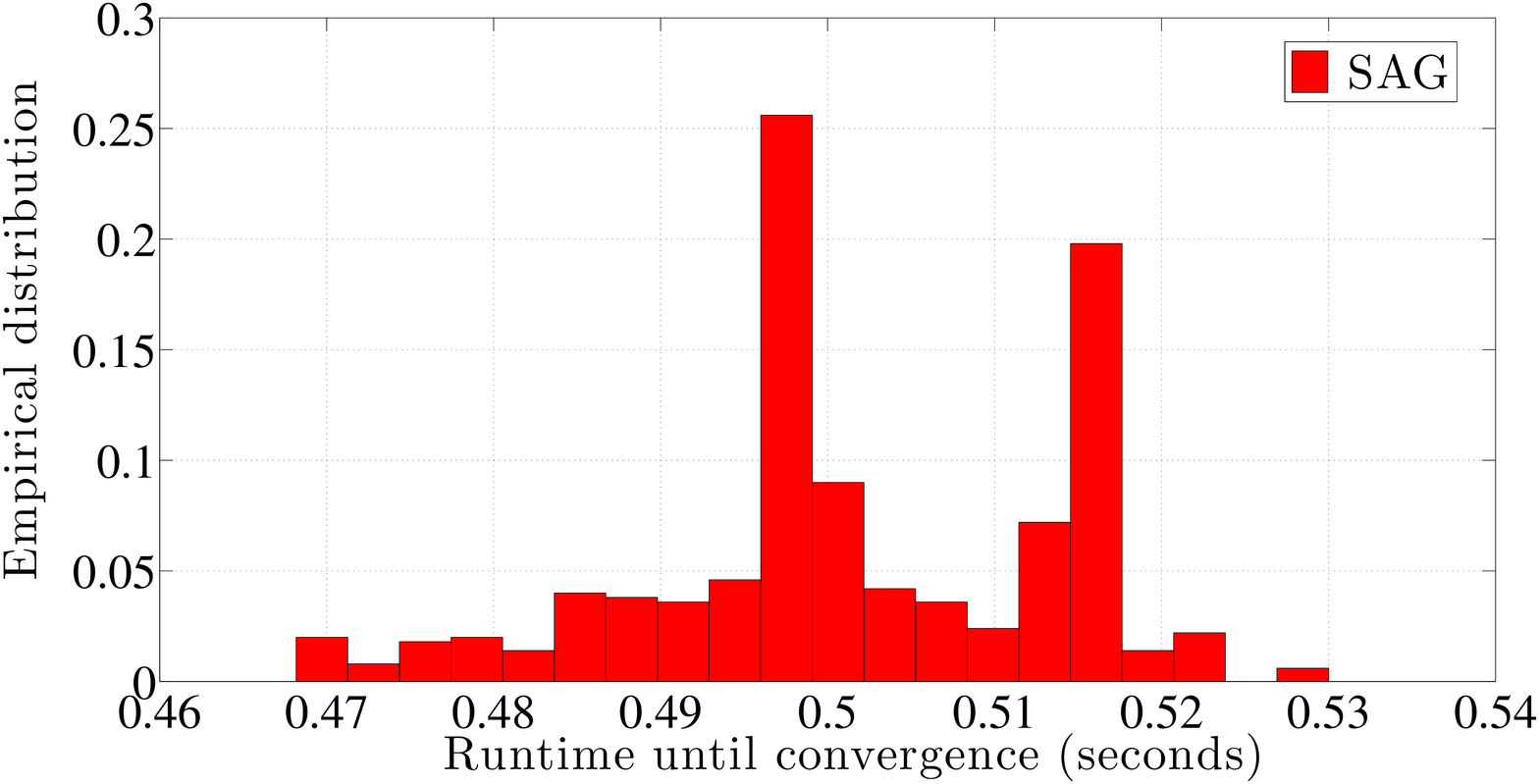}}
   \subfigure[Statistics]{\label{fig:sag_100}{\footnotesize 
   \renewcommand\arraystretch{1.3}  \vspace{-40mm}
   \begin{tabular}{ l l l  l  l }
      \vspace{-40mm}      \\ \hline
                && \multicolumn{3}{c}{CPU runtime (seconds)}\\
      Algorithm && Minimum & Average & Maximum  \\ \hline
      oLBFGS     && 0.0574     & 0.0734      & 0.0897      \\ 
      oBFGS       && 0.1149     & 0.1485      & 0.2375       \\ 
      RES    && 0.2150      & 0.2609     & 0.4623      \\ 
      SGD       && 0.6057      & 0.6364      & 0.6639       \\ 
      SAG       && 0.4682      & 0.5017      & 0.5300      \\ \hline
   \end{tabular}}}}
\caption{ Histograms of required CPU runtime for achieving objective function value $F(\bbw_{t})=10^{-4}$ when $n=10^2$. The convergence time of oLBFGS is smaller than the required runtimes of oBFGS and RES, while SAG and SGD are slower than all the three quasi-Newton methods.}
\label{fig:runtime_100} \end{figure}

%
Figures \ref{fig:obj_func_100} and \ref{fig:obj_func_1000} show the empirical distributions of the objective function value $F(\bbw_{t})$ attained after processing $Lt=4\times 10^4$ feature vectors using $J=10^3$ realizations for the cases that $n=10^2$ and $n=10^3$, respectively. According to Figure \ref{fig:obj_func_100}  the averages of objective value function for oLBFGS, oBFGS and RES are {$1.7\times10^{-5}$, $1.4\times10^{-5}$ and $1.9\times10^{-5}$}, respectively. These numbers show that the performance of oLBFGS is very close to the performances of oBFGS and RES. This similarity holds despite the fact that oLBFGS uses only the last $\tau=10$ stochastic gradients to estimate curvature whereas oBFGS and RES utilize all past stochastic gradients to do so. The advantage of oLBFGS is in the smaller computational cost of processing feature vectors as we discuss in Section \ref{sec:SVM3}. The corresponding average objective values achieved by SGD and SAG after processing $Lt=4\times 10^4$ feature vectors are $1.6\times10^{-3}$ and $5.7\times10^{-4}$, respectively. Both of these are at least an order of magnitude larger than the average objective value achieved by oLBFGS -- or RES and oBFGS for that matter.

Figure \ref{fig:obj_func_1000} repeats the study in Figure \ref{fig:obj_func_100} for the case in which the feature vector dimension is increased to $n=10^3$. The performance of oLBGS is still about the same as the performances of oBFGS and RES. The average objective function values achieved after processing $Lt=4\times 10^4$ feature vectors are $9.9\times10^{-6}$, $9.8\times10^{-6}$ and $9.5\times10^{-6}$} for oLBFGS, oBFGS and RES, respectively. The relative performance with respect to SGD and SAG, however, is now larger. The averages of objective function values for SAG and SGD in this case are $2.1\times10^{-2}$ and $4.5\times10^{-2}$, respectively. These values are more than 3 orders of magnitude larger than the corresponding values achieved by oLBFGS. This relative improvement can be further increased if we consider problems of even larger dimension. Further observe that oBFGS and RES start to become impractical if we further increase the feature vector dimension since the respective iterations have computational costs of order $O(n^2)$ and $O(n^3)$. We analyze this in detail in the following section.

%
\subsection{Convergence versus processing time}\label{sec:SVM3}

%
\begin{figure}[t]{
   \subfigure[oLBFGS]{\label{fig:obfgs_1000_2}%
   \includegraphics[width=0.49\linewidth]{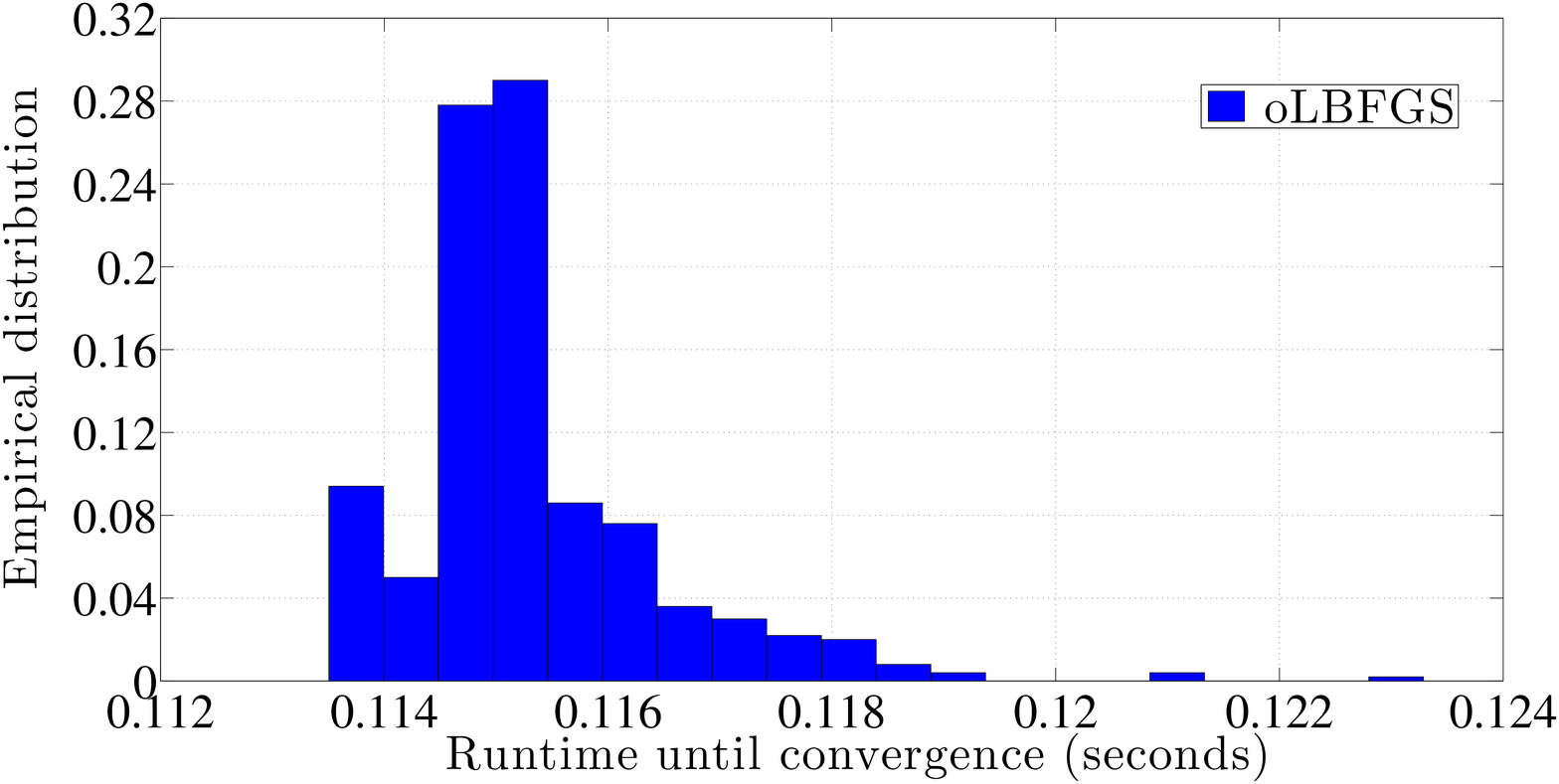}}%
   \subfigure[oBFGS]{\label{fig:olbfgs_1000_2}%
   \includegraphics[width=0.49\linewidth]{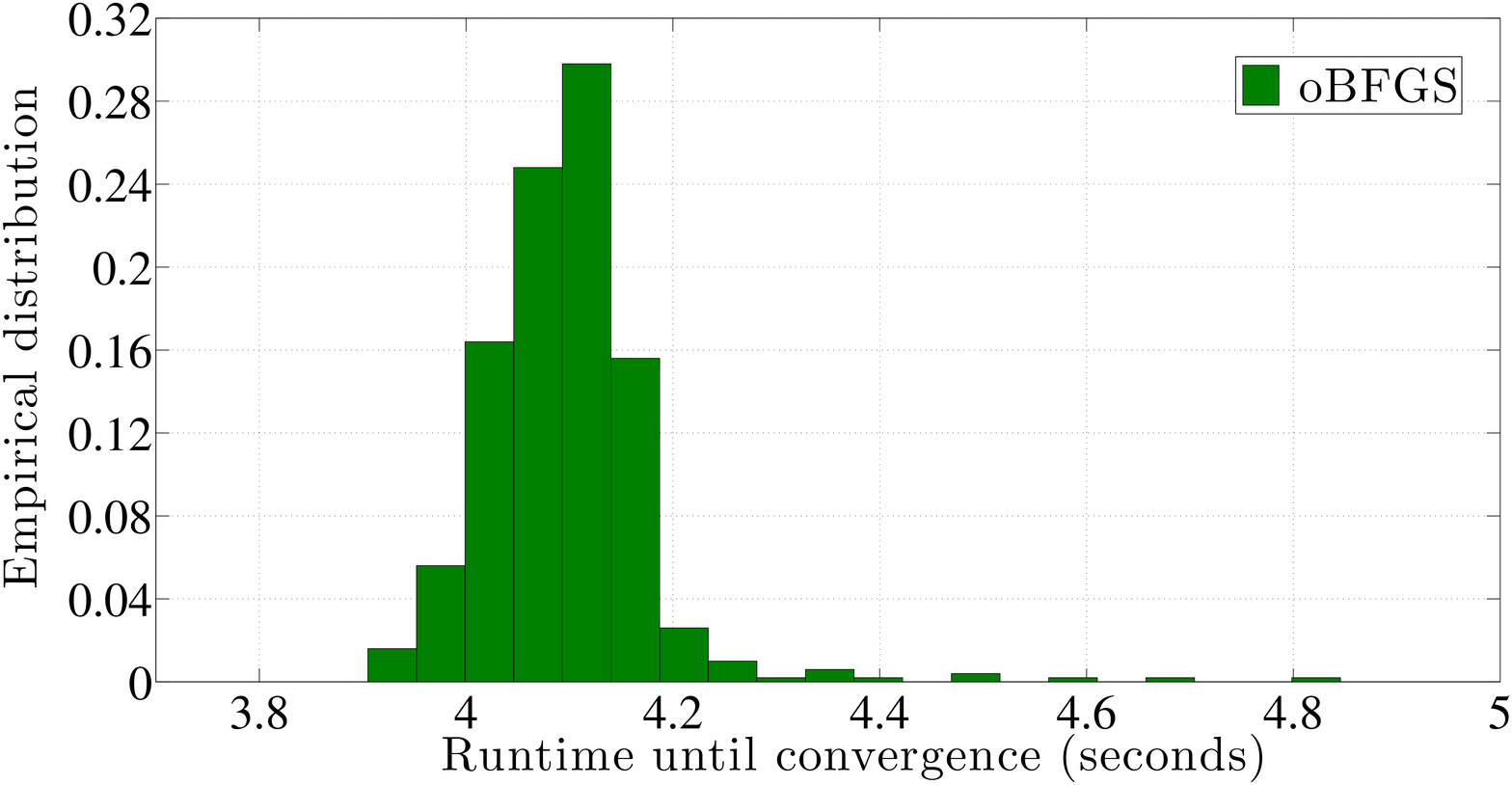}}
   \subfigure[RES]{\label{fig:res_1000_2}%
   \includegraphics[width=0.49\linewidth]{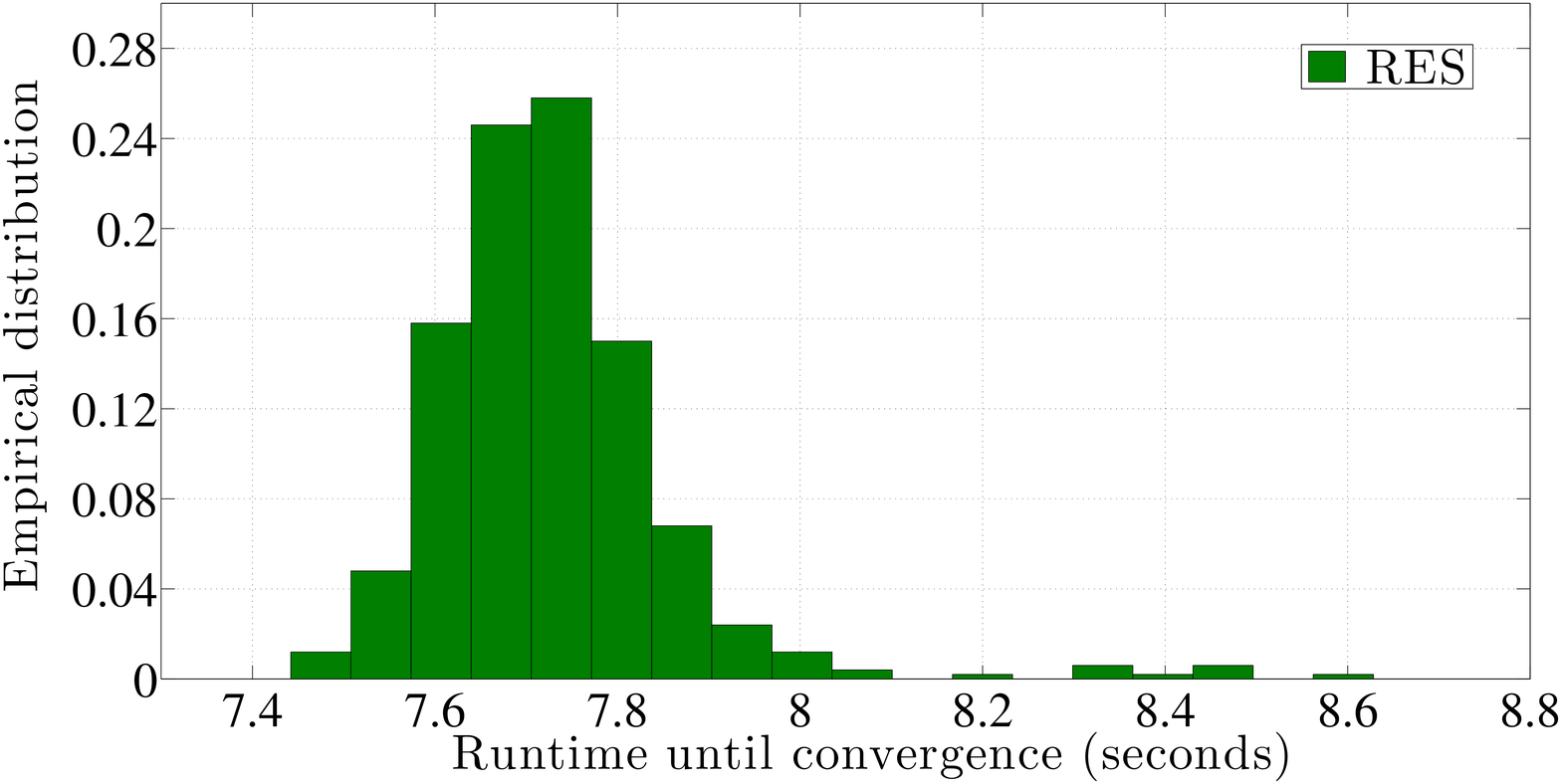}}%
   \subfigure[SGD]{\label{fig:sgd_1000_2}%
   \includegraphics[width=0.49\linewidth]{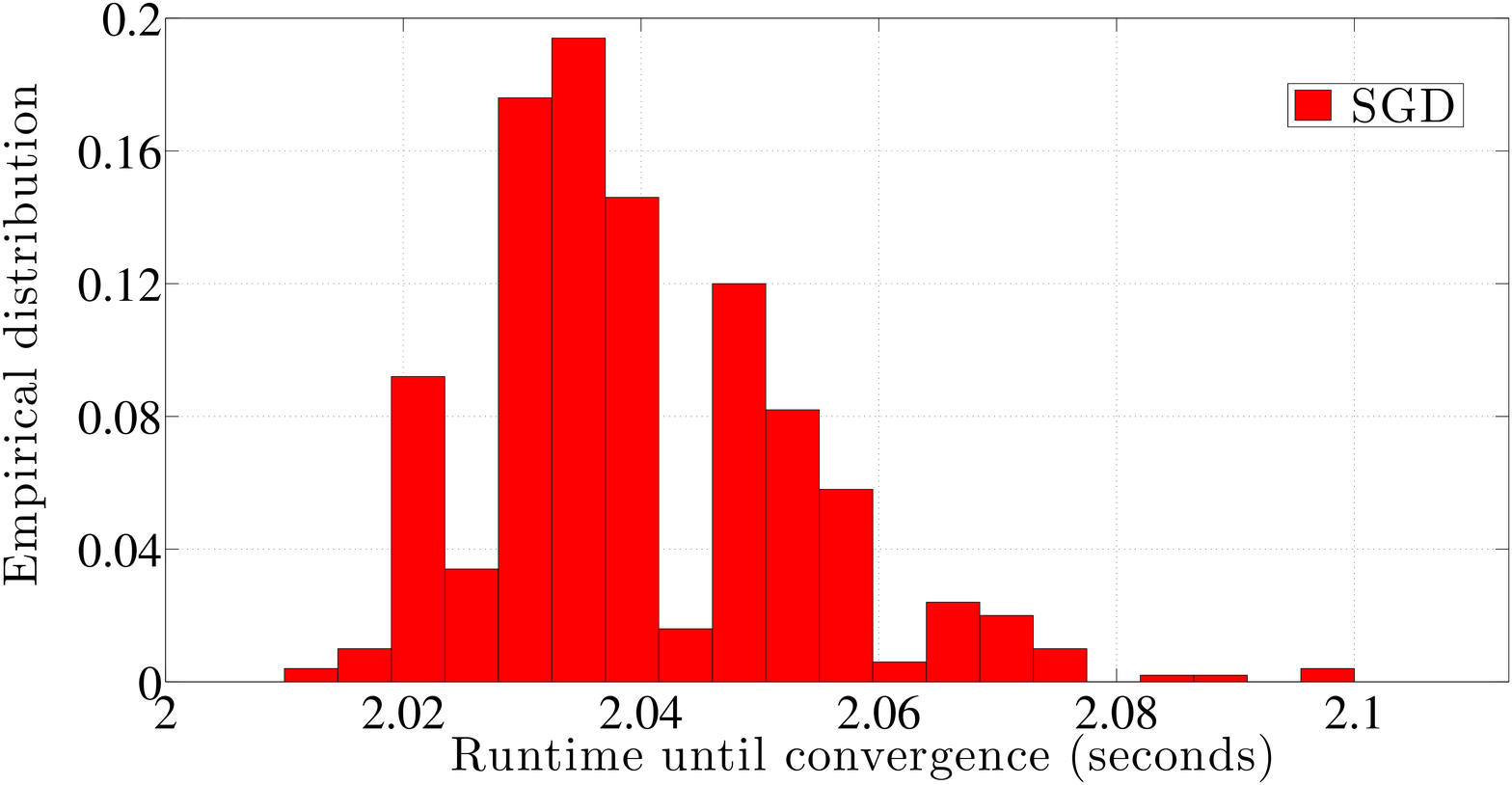}}
   \subfigure[SAG]{\label{fig:sag_1000_2}%
   \includegraphics[width=0.49\linewidth]{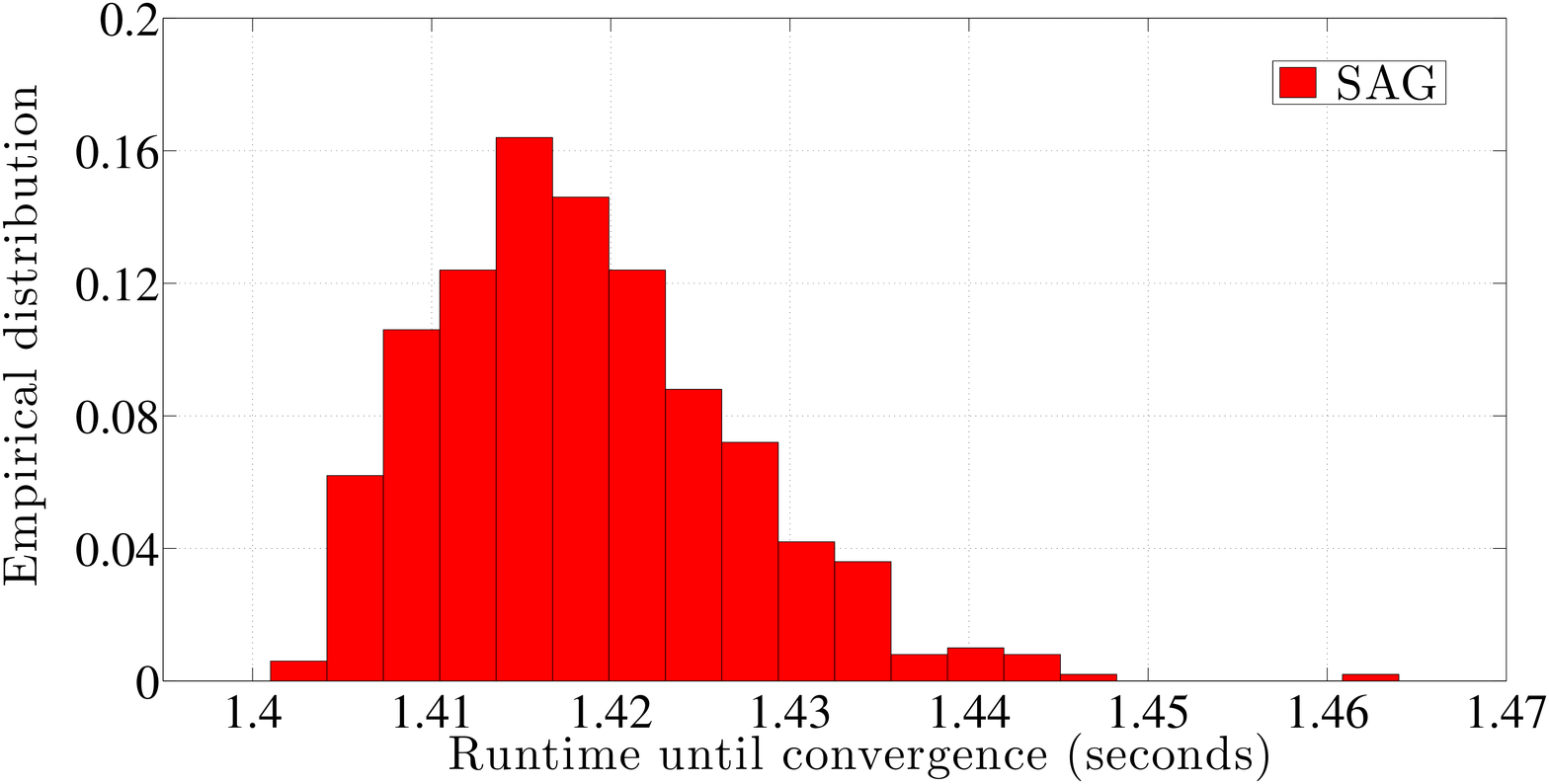}}
   \subfigure[Statistics]{\label{fig:sag_100}{\footnotesize 
   \renewcommand\arraystretch{1.3}  \vspace{-40mm}
   \begin{tabular}{ l l l  l  l }
      \vspace{-40mm}      \\ \hline
                && \multicolumn{3}{c}{CPU runtime (seconds)}\\
      Algorithm && Minimum & Average & Maximum  \\ \hline
      oLBFGS  &&     0.1139      &   0.1153    & 0.1230       \\ 
      oBFGS       &&   3.90    &    4.11   & 4.83       \\ 
      RES   &&    7.44   & 7.73      &  8.61     \\ 
      SGD       && 2.01      & 2.03      & 2.10       \\ 
      SAG       && 1.40      & 1.42      & 1.46       \\ \hline
   \end{tabular}}}}
\caption{ Histograms of required CPU runtime for achieving objective function value $F(\bbw_{t})=10^{-5}$ when $n=10^3$.  SAG and SGD have a faster convergence time in comparison to oBFGS and RES, while oLBFGS is the fastest algorithm among all.}
\label{fig:runtime_1000} \end{figure}

%

The analysis in Section \ref{sec:SVM2} is relevant for online implementations in which the goal is to make the best possible use of the information provided by each new acquired feature vector. In implementations where computational cost is of dominant interest we have to account for the fact that the respective iteration costs are of order $O(n)$ for SGD and SAG, of order $O(\tau n)$ for oLBFGS, and of orders $O(n^2)$ and $O(n^3)$ for oBFGS and RES. As we increase the problem dimension we expect the convergence time advantages of oBFGS and RES in terms of number of feature vectors processed to be overwhelmed by the increased computational cost of each iteration. For oLBFGS, on the contrary, we expect the convergence time advantages in terms of number of feature vectors processed to persist in terms of processing time. To demonstrate that this is the case we repeat the experiments in Section \ref{sec:SVM2} but record the processing time required to achieve a target objective value. The parameters used here are the same parameters of Section \ref{sec:SVM2}.

In Figure \ref{fig:runtime_100} we consider $n=10^2$ and record the processing time required to achieve the objective function value $F(\bbw_{t})=10^{-4}$. Histograms representing empirical distributions of execution times measured in seconds (s) are shown for oLBFGS, oBFGS, RES, SGD, and SAG. We also summarize the average minimum and maximum times observed for each algorithm. The average run times for oBFGS and RES are $0.14\,\text{s}$ and $0.26\,\text{s}$ which are better than the average run times of SGD and SAG that stand at $0.63\,\text{s}$ and $0.50\,\text{s}$. The advantage, however, is less marked than when measured with respect to the number of feature vector processed. For oLBGS the advantage with respect to SGD and SAG is still close to one order of magnitude since the average convergence time stands at $0.073\,\text{s}$. When measured in computation time oLBGS is also better than RES and oBFGS, as expected.

Figure \ref{fig:runtime_1000} presents the analogous histograms and summary statistics when the feature vector dimension is $n=10^3$ and the algorithm is run until achieving the objective value $F(\bbw_{t})=10^{-5}$. For this problem and metric the performances of RES and oBFGS are worse than the corresponding performances of SGD and SAG. The respective average convergence times are $7.7\,\text{s}$ and $4.1\,\text{s}$ for RES and oBFGS and $1.4\,\text{s}$ and $2.0\,\text{s}$ for SAG and SGD. The oLBFGS algorithm, however, has an average convergence time of $0.11\,\text{s}$. This is still an order of magnitude faster than the first order methods SAG and SGD -- and has an even larger advantage with respect to oBFGS and RES, by extension. The relative reduction of execution times of oLBGS relative to all other 4 methods becomes more marked for problems of larger dimension. We investigate these advantages on the search engine advertising problem that we introduce in the following section.


\section{Search engine advertising} \label{sec:ad_application}

We apply oLBFGS to the problem of predicting the click-through rate (CTR) of an advertisement displayed in response to a specific search engine query by a specific visitor. In these problems we are given meta information about an advertisement, the words that appear in the query, as well as some information about the visitor and are asked to predict the likelihood that this particular ad is clicked by this particular user when performing this particular query. The information specific to the ad includes descriptors of different characteristics such as the words that appear in the title, the name of the advertiser, keywords that identify the product, and the position on the page where the ad is to be displayed. The information specific to the user is also heterogeneous and includes gender, age, and propensity to click on ads.  To train a classifier we are given information about past queries along with the corresponding click success of the ads displayed in response to the query. The ad metadata along with user data and search words define a feature vector that we use to train a logistic regressor that predicts the CTR of future ads. Given the heterogeneity of the components of the feature vector we expect a logistic cost function with skewed level sets and consequent large benefits from the use of oLBFGS. 

%
\renewcommand\arraystretch{1.3} 
\begin{table*}\centering\small{
\caption{Components of the feature vector for prediction of advertisements click-through rates. For each feature class we report the total number of components in the feature vector as well as the maximum and average number of nonzero components.                    \qquad\qquad}
\begin{tabular}{ l  r  r  r }\hline    
    && \multicolumn{2}{c}{{\bf Nonzero components}} \\
       {\bf Feature type}    
     & {\bf Total components} 
     & {\bf Maximum (observed/structure)} 
     & {\bf Mean (observed)} \\ \hline
    Age         & 6$\qquad$                  & 1 (structure)$\qquad$               & $1.0\qquad$   \\ 
    Gender      & 3$\qquad$                  & 1 (structure)$\qquad$               & $1.0\qquad$\\ 
      Impression      & 3$\qquad$                  & 1 (structure)$\qquad$              & $1.0\qquad$  \\ 
Depth      & 3$\qquad$                  & 1 (structure)$\qquad$                & $1.0\qquad$   \\ 
 Position      & 3$\qquad$                  & 1  (structure)$\qquad$              & $1.0\qquad$  \\ 
Query      & 20,000$\qquad$             &  $ 125$ (observed)$\qquad$           &$3.0\qquad$  \\ 
   Title     & 20,000$\qquad$             &  $29$ (observed)$\qquad$          &$8.8\qquad$  \\ 
   Keyword       & 20,000$\qquad$             &      $16$ (observed)$\qquad$           &$2.1\qquad$  \\ 
  Advertiser ID       &    5,184$\qquad$         &     1 (structure)$\qquad$          &$1.0\qquad$   \\ 
  Advertisement ID  & 108,824$\qquad$               &  1 (structure)$\qquad$           &$1.0\qquad$   \\ \hline
     {\bf Total} 
   & {\bf 174,026}$\qquad$            
   & {\bf148 (observed)}$\qquad$         
   & {\bf 20.9$\qquad\!\!$}   \\ 
\end{tabular}}
\label{table_summary_methods_properties}
\end{table*}

%
\subsection{Feature vectors}\label{sec:dataset}

For the CTR problem considered here we use the Tencent search engine data set \cite{Soso}. This data set contains the outcomes of 236 million ($236 \times 10^6$) searches along with information about the ad, the query, and the user. The information contained in each sample point is the following:

\begin{itemize}
\item User profile: If known, age and gender of visitor performing query.
\item Depth: Total number of advertisements displayed in the search results page.
\item Position: Position of the advertisement in the search page.
\item Impression: Number of times the ad was displayed to the user who issued the query.
\item Query: The words that appear in the user's query.
\item Title: The words that appear in the title of ad.
\item Keywords: Selected keywords that specify the type of product.
\item Ad ID: Unique identifier assigned to each specific advertisement.
\item Advertiser ID: Unique identifier assigned to each specific advertiser.
\item Clicks: Number of times the user clicked on the ad.
\end{itemize}

From this information we create a set of feature vectors $\{\bbx_i\}_{i=1}^{N}$, with corresponding labels $y_i\in \{-1,1\}$. The label associated with feature vector $\bbx_{i}$ is $y_i=1$ if the number of clicks in the ad is more than $0$. Otherwise the label is $y_{i}=-1$. We use a binary encoding for all the features in the vector $\bbx_i$. For the age of the user we use the six age intervals $(0,12] $, $(12,18]$, $(18,24]$, $(24,30]$, $(30,40]$, and $(40,\infty)$ to construct six indicator entries in $\bbx_i$ that take the value 1 if the age of the user is known to be in the corresponding interval. E.g., a 21 year old user has an age that falls in the third interval which implies that we make $[\bbx_i]_3=1$ and $[\bbx_i]_k=0$ for all other $k$ between 1 and 6. If the age of the user is unknown we make $[\bbx_i]_k=0$ for all $k$ between 1 and 6. For the gender of the visitors we use the next three components of $\bbx_i$ to indicate male, female, or unknown gender. For a male user we make $[\bbx_i]_7=1$, for a female user $[\bbx_i]_8=1$, and for visitors of unknown gender we make $[\bbx_i]_9=1$. The next three components of $\bbx_{i}$ are used for the depth feature. If the the number of advertisements displayed in the search page is $1$ we make $[\bbx_{i}]_{10}=1$, if $2$ different ads are shown we make $[\bbx_{i}]_{11}=1$, and for depths of $3$ or more we make $[\bbx_{i}]_{12}=1$. To indicate the position of the ad in the search page we also use three components of $\bbx_{i}$. We use $[\bbx_{i}]_{13}=1$, $[\bbx_{i}]_{14}=1$, and $[\bbx_{i}]_{15}=1$ to indicate that the ad is displayed in the first, second, and third position, respectively. Likewise we use $[\bbx_{i}]_{16}$, $[\bbx_{i}]_{17}$ and $[\bbx_{i}]_{18}$ to indicate that the impression of the ad is $1$, $2$ or more than $3$.

For the words that appear in the query we have in the order of $10^5$ distinct words. To reduce the number of elements necessary for this encoding we create 20,000 bags of words through random hashing with each bag containing 5 or 6 distinct words. Each of these bags is assigned an index $k$. For each of the words in the query we find the bag in which this word appears. If the word appears in the $k$th bag we indicate this occurrence by setting the $k+18$th component of the feature vector to $[\bbx_i]_{k+18}=1$. Observe that since we use 20,000 bags, components 19 through 20,018 of $\bbx_i$ indicate the presence of specific words in the query. Further note that we may have more than one $\bbx_i$ component different from zero because there may be many words in the query, but that the total number of nonzero elements is much smaller than 20,000. On average, $3.0$ of these elements of the feature vector are nonzero. The same bags of words are used to encode the words that appear in the title of the ad and the product keywords. We encode the words that appear in the title of the ad by using the next $20,000$ components of vector $\bbx_{i}$, i.e. components $20,019$ through $40,018$. Components $40,019$ through $60,018$ are used to encode product keywords. As in the case of the words in the search just a few of these components are nonzero.  On average, the number of non-zero components of feature vectors that describe the title features is $8.8$. For product keywords the average is $2.1$. Since the number of distinct advertisers in the training set is $5,184$ we use feature components $60,019$ through $65202$ to encode this information. For the $k$th advertiser ID we set the $k+60,018{th}$ component of the feature vector to $[\bbx_{i}]_{k+60,018}=1$. Since the number of distinct advertisements is $108,824$ we allocate the last $108,824$ components of the feature vector to encode the ad ID. Observe that only one out of $5,184$ advertiser ID components and one of the $108,824$ advertisement ID components are nonzero.

In total, the length of the feature vector is 174,026 where each of the components are either $0$ or $1$. The vector is very sparse. We observe a maximum of 148 nonzero elements and an average of $20.9$ nonzero elements in the training set -- see Table \ref{table_summary_methods_properties}. This is important because the cost of implementing inner products in the oLBFGS training of the logistic regressor that we introduce in the following section is proportional to the number of nonzero elements in $\bbx_i$.

%
\subsection{Logistic regression of click-through rate}\label{sec:ctr_prediction}

We use the training set to estimate the CTR with a logistic regression as in, e.g., \cite{ZhangDuchi}. For that purpose let $\bbx\in\reals^n$ be a vector containing the features described in Section \ref{sec:dataset}, $\bbw\in\reals^n$ a classifier that we want to train, and $y\in{-1,1}$ an indicator variable that takes the value $y=1$ when the ad presented to the user is clicked and $y=-1$ when the ad is not clicked by the user. We hypothesize that the CTR, defined as the probability of observing $y=1$, can be written as the logistic function 
\begin{equation}\label{regression_problem}
   \text{CTR}(\bbx; \bbw) \ :=\P{y=1\given\bbx; \bbw} \ 
                =\  \frac{1}{1+\exp\big(-\bbx^{T}\bbw\big)}\ .
\end{equation}
We read \eqref{regression_problem} as stating that for a feature vector $\bbx$ the CTR is determined by the inner product $\bbx^{T}\bbw$ through the given logistic transformation. 

Consider now the training set $\ccalS = \{ (\bbx_{i},y_{i}) \}_{i=1}^{N}$ which contains $N$ realizations of features $\bbx_{i}$ and respective click outcomes $y_i$ and further define the sets $\ccalS_1:=\{(\bbx_i,y_i)\in\ccalS : y_i=1\}$ and $\ccalS_{-1}:=\{(\bbx_i,y_i)\in\ccalS : y_i=-1\}$ containing clicked and unclicked advertisements, respectively. With the data given in $\ccalS$ we define the optimal classifier $\bbw^{*}$ as a maximum likelihood estimate (MLE) of $\bbw$ given the model in \eqref{regression_problem} and the training set $\ccalS$. This MLE can be found as the minimizer of the log-likelihood loss 
\begin{align}\label{SVM_problem}
   \bbw^{*} 
   \ :=\ & \argmin \frac{\lambda}{2} \|\bbw\|^2\ +\ \frac{1}{N}
                   \sum_{i=1}^{N}\log\Big(1+\exp\big(-y_{i}\bbx_{i}^{T}\bbw\big)\Big)
                   \nonumber\\
   \  =\ & \argmin \frac{\lambda}{2} \|\bbw\|^2 \ +\ \frac{1}{N}
                   \bigg[\
                        \sum_{\bbx_i\in\ccalS_{1}}  \log\Big(1+\exp(-\bbx_{i}^{T}\bbw)\Big)	 
                      + \sum_{\bbx_i\in\ccalS_{-1}} \log\Big(1+\exp( \bbx_{i}^{T}\bbw)\Big)
                   \ \bigg],
\end{align}
where we have added the regularization term $\lambda\|\bbw\|^2/2$ to disincentivize large values in the weight vector $\bbw^*$; see e.g., \cite{Ng}.

The practical use of \eqref{regression_problem} and \eqref{SVM_problem} is as follows. We use the data collected in the training set $\ccalS$ to determine the vector $\bbw^*$ in \eqref{SVM_problem}. When a user issues a query we concatenate the user and query specific elements of the feature vector with the ad specific elements of several candidate ads. We then proceed to display the advertisement with, say, the largest CTR. We can interpret the set $\ccalS$ as having been acquired offline or online. In the former case we want to use a stochastic optimization algorithm because computing gradients is infeasible -- recall that we are considering training samples with a number of elements $N$ in the order of $10^6$. The performance metric of interest in this case is the logistic cost as a function of computational time. If elements of $\ccalS$ are acquired online we update $\bbw$ whenever a new vector becomes available so as to adapt to changes in preferences. In this case we want to exploit the information in new samples as much as possible. The correct metric in this case is the logistic cost as a function of the number of feature vectors processed. We use the latter metric for the numerical experiments in the following section.

%
\subsection{Numerical Results}\label{sec_ad_numerical_results}

Out of the $236 \times 10^6$ in the Tencent dataset we select $10^6$ sample points to use as the training set $\ccalS$ and $10^5$ sample points to use as a test set $\ccalT$. To select elements of the training and test set we divide the first $1.1\times 10^6$ sample points of the complete dataset in $10^5$ consecutive blocks with 11 elements. The first 10 elements of the block are assigned to the training set and the 11th element to the test set. To solve for the optimal classifier we implement SGD and oLBFGS by selecting feature vectors $\bbx_i$ at random from the training set $\ccalS$.  In all of our numerical experiments the regularization parameter in \eqref{SVM_problem} is $\lambda=10^{-6}$. The stepsizes for both algorithms are of the form $\epsilon_t=\epsilon_{0}T_{0} / (T_{0}+t)$. We set $\epsilon_{0} = 10^{-2}$ and $T_{0}=10^4$ for oLBFGS and $\epsilon_{0} = 10^{-1}$ and $T_{0}=10^6$ for SGD. For SGD the sample size in \eqref{stochastic_gradient} is set to $L=20$ whereas for oLBFGS it is set to $L=100$. The values of parameters $\eps_0$, $T_0$, and $L$ are chosen to yield best convergence times in a rough parameter optimization search. {Observe the relatively large values of $L$ that are used to compute stochastic gradients. This is necessary due to the extreme sparsity of the feature vectors $\bbx_i$ that contain an average of only $20.9$ nonzero out 174,026 elements. Even when considering $L=100$ vectors they are close to orthogonal.} The size of memory for oLBFGS is set to $\tau=10$. With $L=100$ features with an average sparsity of $20.9$ nonzero elements and memory $\tau=10$ the cost of each LBGS iteration is in the order of $2.1\times 10^4$ operations. 

%
\begin{figure}\centering
\includegraphics[width=0.7\linewidth,height=0.34\linewidth]{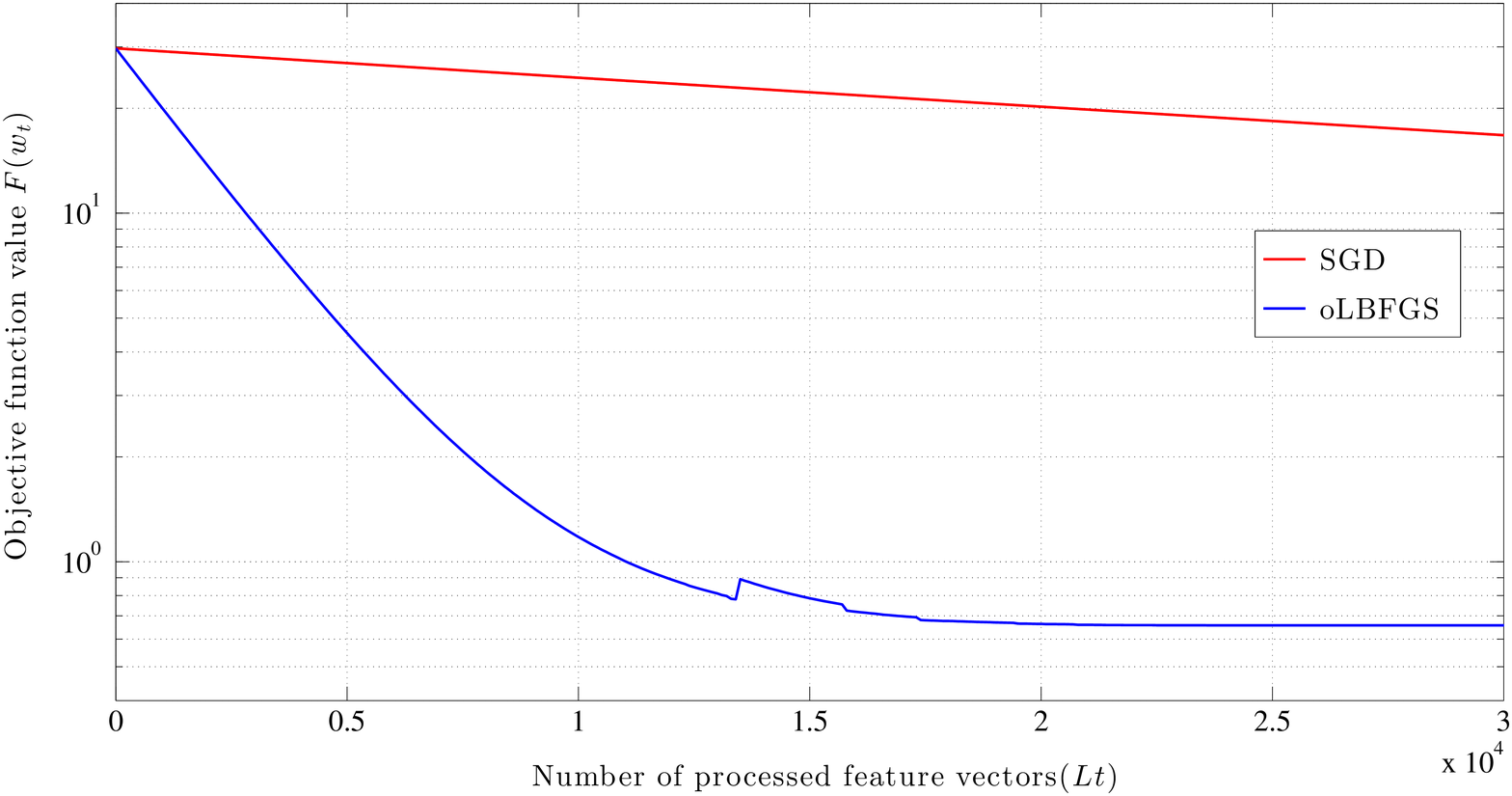}
\caption{Illustration of Negative log-likelihood value for oLBFGS and SGD after processing certain amount of feature vectors. The accuracy of oLBFGS  is better than SGD after processing a specific number of feature vectors.}
\label{fig:illustration} \end{figure}
%

Figure \ref{fig:illustration} illustrates the convergence path of SGD and oLBFGS on the advertising training set. We depict the value of the log likelihood objective in  \eqref{SVM_problem} evaluated at $\bbw=\bbw_t$ where $\bbw_t$ is the classifier iterate determined by SGD or oLBFGS. The horizontal axis is scaled by the number of feature vectors $L$ that are used in the evaluation of stochastic gradients. This results in a plot of log likelihood cost versus the number $Lt$ of feature vectors processed. To read iteration indexes from  Figure \ref{fig:illustration} divide the horizontal axis values by $L=100$ for oLBGS and $L=20$ for SGD. Consistent with the synthetic data results in Section \ref{sec:SVMproblem}, the curvature correction of oLBFGS results in significant reductions in convergence time. For way of illustration observe that after processing $Lt=3\times10^4$ feature vectors the objective value achieved by oLBFGS is $F(\bbw_t) =0.65$, while for SGD it still stands at $F(\bbw_t) =16$ which is a meager reduction from the random initialization point at which $F(\bbw_0) =30$. In fact, oLBFGS converges to the minimum possible log likelihood cost $F(\bbw_t) =0.65$ after processing $1.7\times10^4$ feature vectors. This illustration hints that oLBGS makes better use of the information available in feature vectors.

{To corroborate that the advantage of oLBGS is not just an artifact of the structure of the  log likelihood cost in \eqref{SVM_problem} we process $2\times10^4$ feature vectors with SGD and oLBFGS and evaluate the predictive accuracy of the respective classifiers on the test set.} As measures of predictive accuracy we adopt the frequency histogram of the predicted click through rate $\text{CTR}(\bbx;\bbw)$ for all clicked ads and the frequency histogram of the complementary predicted click through rate $1 - \text{CTR}(\bbx;\bbw)$ for all the ads that were {\it not} clicked. {To do so we separate the test set by defining the set $\ccalT_1:=\{(\bbx_i,y_i)\in\ccalT : y_i=1\}$ of clicked ads and the set $\ccalT_{-1}:=\{(\bbx_i,y_i)\in\ccalT : y_i=-1\}$ of ads in the test set that were not clicked. For a given classifier $\bbw$ we compute the predicted probability $\text{CTR}(\bbx_i;\bbw)$ for each of the ads in the clicked set $\ccalT_1$. We then consider a given interval $[a,b]$ and define the frequency histogram of the predicted click through rate as the fraction of clicked ads for which the prediction $\text{CTR}(\bbx_i;\bbw)$ falls in $[a,b]$, 
\begin{equation}\label{eqn_clicked_ads_histogram}
   \ccalH_1(\bbw; a,b) :=  \frac{1}{\#(\ccalT_1)} 
                           \sum_{(\bbx_i, y_i)\in\ccalT_{1}} 
                           \mbI \Big\{ \text{CTR}(\bbx_i;\bbw) \in[a,b] \Big\},
\end{equation}
where $\#(\ccalT_1)$ denotes the cardinality of the set $\ccalT_1$. Likewise, we consider the ads in the set $\ccalT_{-1}$ that were not clicked and compute the prediction $1 - \text{CTR}(\bbx_i;\bbw)$ on the probability of the ad not being clicked. We then consider a given interval $[a,b]$ and define the frequency histogram $\ccalH_{-1}(\bbw; a,b)$ as the fraction of unclicked ads for which the prediction $1-\text{CTR}(\bbx_i;\bbw)$ falls in $[a,b]$, 
\begin{equation}\label{eqn_unclicked_ads_histogram}
   \ccalH_{-1}(\bbw; a,b) :=  \frac{1}{\#(\ccalT_{-1})} 
                           \sum_{(\bbx_i, y_i)\in\ccalT_{-1}} 
                           \mbI \Big\{1- \text{CTR}(\bbx_i;\bbw) \in[a,b] \Big\}.
\end{equation}
The histogram $\ccalH_1(\bbw; a,b)$ in \eqref{eqn_clicked_ads_histogram} allows us to study how large the predicted probability $\text{CTR}(\bbx_i;\bbw)$ is for the clicked ads. Conversely, the histogram $\ccalH_{-1}(\bbw; a,b)$ in \eqref{eqn_unclicked_ads_histogram} gives an indication of how large the predicted probability $1- \text{CTR}(\bbx_i;\bbw)$ is for the unclicked ads. An ideal classifier is one for which the frequency counts in $\ccalH_1(\bbw; a,b)$ accumulate at $\text{CTR}(\bbx_i;\bbw) =1$ and for which $\ccalH_{-1}(\bbw; a,b)$ accumulates observations at $1- \text{CTR}(\bbx_i;\bbw) = 1$. This corresponds to a classifier that predicts a click probability of 1 for all ads that were clicked and a click probability of 0 for all ads that were not clicked.\\}

%
\begin{figure}\centering
\subfigure[{Histogram $\ccalH_1(\bbw; a,b)$, [cf. \eqref{eqn_clicked_ads_histogram}].}]{
\includegraphics[width=0.48\linewidth]
{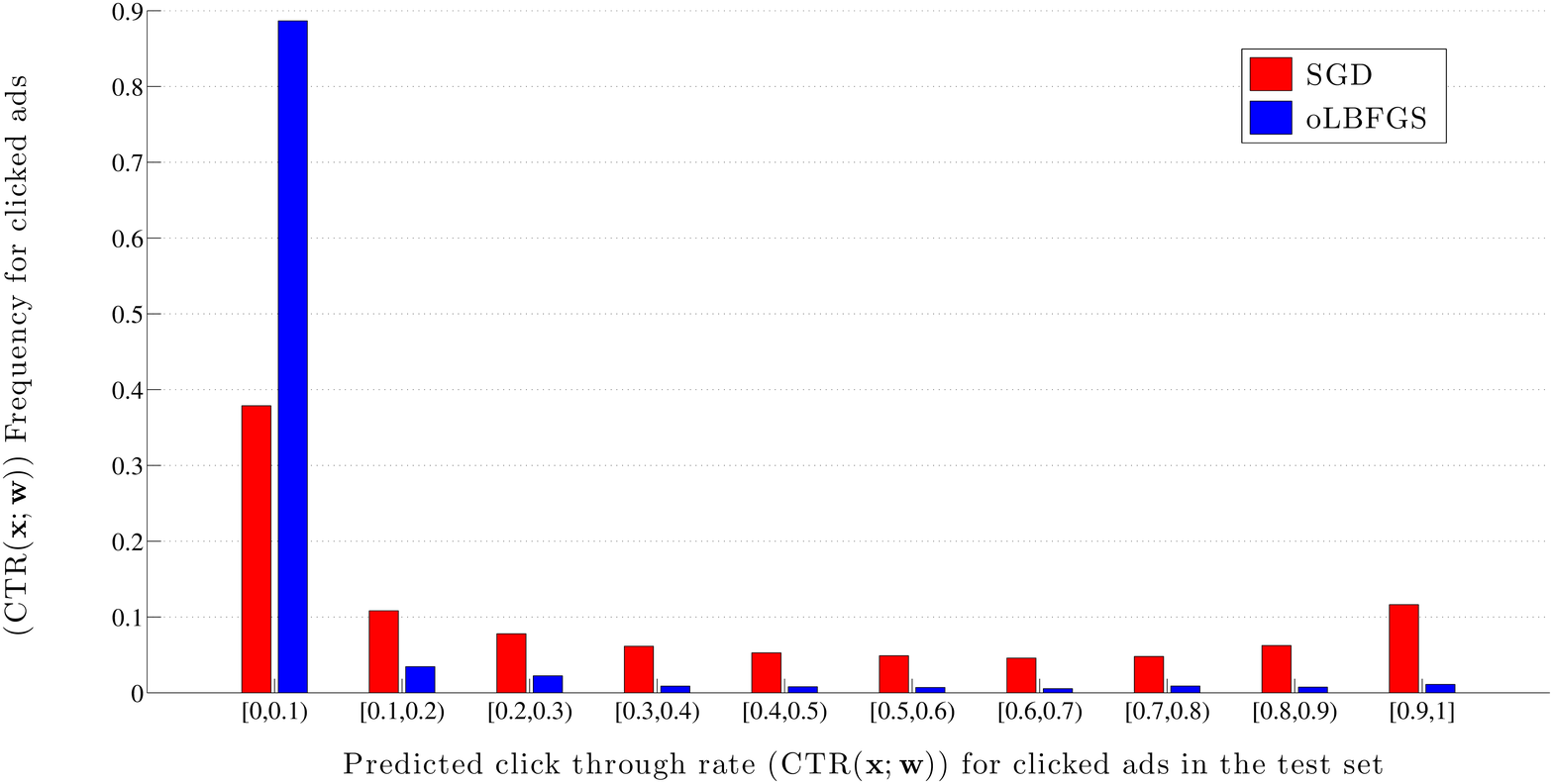}\label{fig:subfig1}}
\subfigure[{Histogram $\ccalH_{-1}(\bbw; a,b)$, [cf. \eqref{eqn_unclicked_ads_histogram}].}]{
\includegraphics[width=0.48\linewidth]
{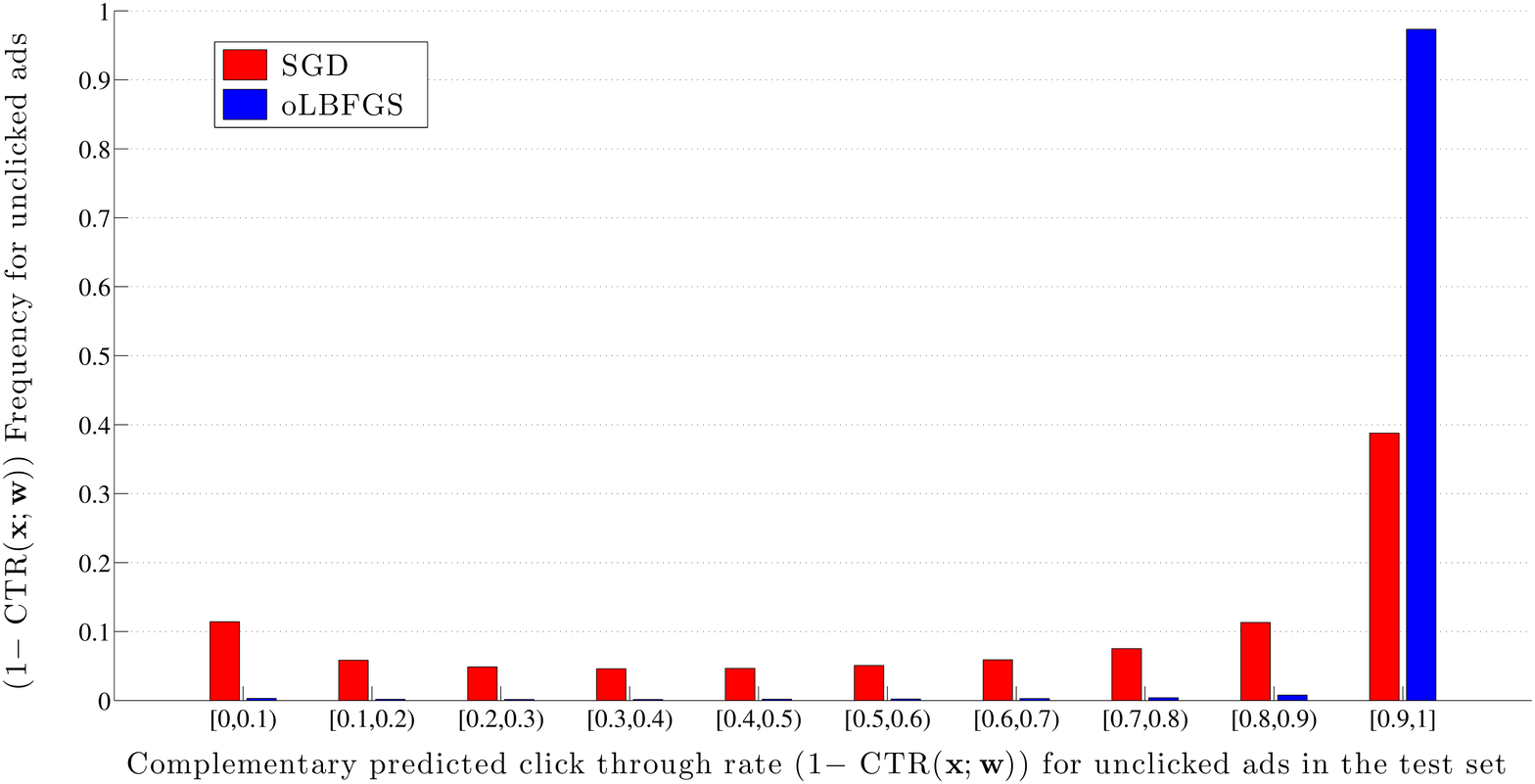}\label{fig:subfig2}}
\caption{Performance of classifier after processing $2\times10^4$ feature vectors with SGD and oLBFGS  for the cost in \eqref{SVM_problem}. Histograms for: (a) predicted click through rate $\text{CTR}(\bbx;\bbw)$ for all clicked ads; and (b) complementary predicted click through rate $1 - \text{CTR}(\bbx;\bbw)$ for all unclicked ads. For an ideal classifier that predicts a click probability $\text{CTR}(\bbx;\bbw) =1$ for all clicked ads and a click probability $\text{CTR}(\bbx;\bbw) =0$ for all unclicked ads the frequency counts in $\ccalH_1(\bbw; a,b)$ and $\ccalH_{-1}(\bbw; a,b)$ would accumulate in the $[0.9,1]$ bin. Neither SGD nor oLBFGS compute acceptable classifiers because the number of clicked ads in the test set is very small and predicting $\text{CTR}(\bbx;\bbw)=0$ for all ads is close to the minimum of \eqref{SVM_problem}. }
\label{gholi}\end{figure}

%

{
 Fig. \ref{fig:subfig1} shows the histograms of predicted click through rate $\text{CTR}(\bbx;\bbw)$ for all clicked ads by oLBFGS and SGD classifiers after processing $2\times10^4$ training sample points. oLBFGS classifier for $88\%$ of test points in $\ccalT_{1}$ predicts $ \text{CTR}(\bbx;\bbw)$ in the interval $[0,0.1]$ and the classifier computed by SGD estimates the click through rate $\text{CTR}(\bbx;\bbw)$ in the same interval for $37\%$ of clicked ads in the test set. These numbers shows the inaccurate click through rate predictions of both classifiers for the test points with label $y=1$. Although, SGD and oLBFGS classifiers have catastrophic performances in predicting click through rate $ \text{CTR}(\bbx;\bbw)$ for the clicked ads  in the test set, they perform well in estimating complementary predicted click through rate $1- \text{CTR}(\bbx;\bbw)$ for the test points with label $y=-1$. This observation implied by Fig. \ref{fig:subfig2} which shows the histograms of complementary predicted click through rate $1- \text{CTR}(\bbx;\bbw)$ for all \textit{not} clicked ads by oLBFGS and SGD classifiers after processing $2\times10^4$ training sample points. As it shows after processing $2\times10^4$ sample points of the training set the predicted probability $1-\text{CTR}(\bbx;\bbw)$ by the SGD classifier for $38.8\%$ of the test points are in the interval $[0.9,1]$, while for the  classifier computed by oLBFGS $97.3\%$ of predicted probability $1-\text{CTR}(\bbx;\bbw)$ are in the interval $[0.9,1]$ which is a significant performance. \\ } 

The reason for the inaccurate predictions of both classifiers is that most elements in the training set $\ccalS$ are unclicked ads. Thus, the minimizer $\bbw^*$ of the log likelihood cost in \eqref{SVM_problem} is close to a classifier that predicts $\text{CTR}(\bbx; \bbw^*)\approx0$ for most ads. Indeed, out of the $10^6$ elements in the training set, $94.8\%$ of them have labels $y_i=-1$ and only the remaining $5.2\times10^4$ feature vectors correspond to clicked ads. To overcome this problem we replicate observations with labels $y_i=1$ to balance  the representation of both labels in the training set. Equivalently, we introduce a constant $\gamma$ and redefine the log likelihood objective in \eqref{SVM_problem} to give a larger weight to feature vectors that correspond to clicked ads,
\begin{align}\label{SVM_problem_beta}
   \bbw^{*} 
   \  =\ & \argmin \frac{\lambda}{2} \|\bbw\|^2 \ +\ \frac{1}{M}
                   \bigg[\ \gamma
                        \sum_{\bbx_i\in\ccalS_{1}}  \log\Big(1+\exp(-\bbx_{i}^{T}\bbw)\Big)	 
                      + \sum_{\bbx_i\in\ccalS_{-1}} \log\Big(1+\exp( \bbx_{i}^{T}\bbw)\Big)
                   \bigg],
\end{align}
where we defined $M:=\gamma\#(\ccalS_1) +\#(\ccalS_{-1})$ to account for the replication of clicked featured vectors that is implicit in \eqref{SVM_problem_beta}. To implement SGD and oLBFGS in the weighted log function in \eqref{SVM_problem_beta} we need to bias the random choice of feature vector so that vectors in $\ccalS_1$ are $\gamma$ times more likely to be selected than vectors in $\ccalS_2$. Although our justification to introduce $\gamma$ is to balance the types of feature vectors, $\gamma$ is just a tradeoff constant to increase the percentage of correct predictions for clicked ads -- which is close to zero in Figure \ref{gholi} -- at the cost of reducing the accuracy of correct predictions of unclicked ads  -- which is close to one in Figure \ref{gholi}. 

%
\begin{figure}\centering
\subfigure[{Histogram $\ccalH_1(\bbw; a,b)$, [cf. \eqref{eqn_clicked_ads_histogram}].}]{
\includegraphics[width=0.48\linewidth]
{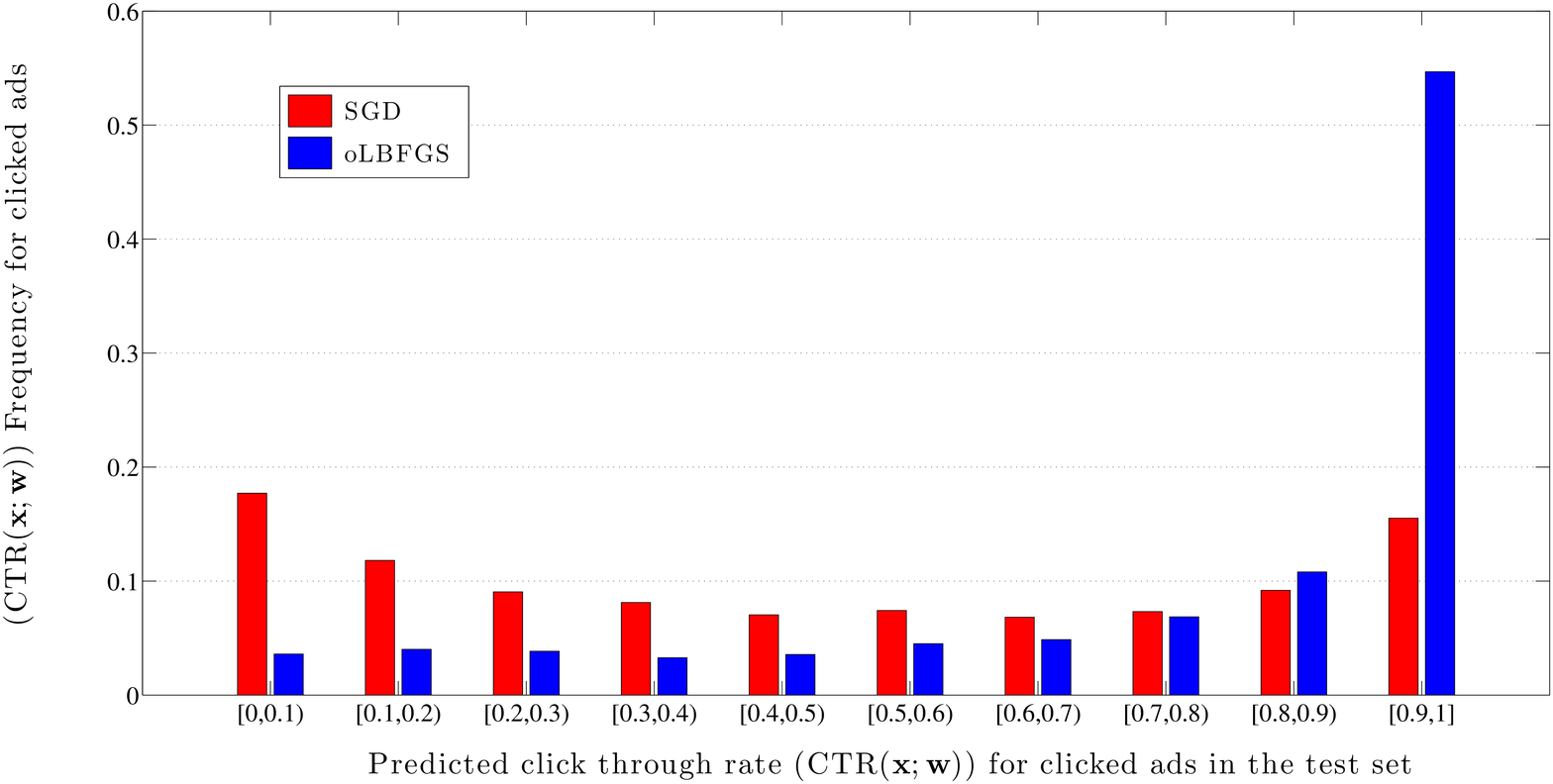}\label{fig:subfig111}}
\subfigure[{Histogram $\ccalH_{-1}(\bbw; a,b)$, [cf. \eqref{eqn_unclicked_ads_histogram}].}]{
\includegraphics[width=0.48\linewidth]
{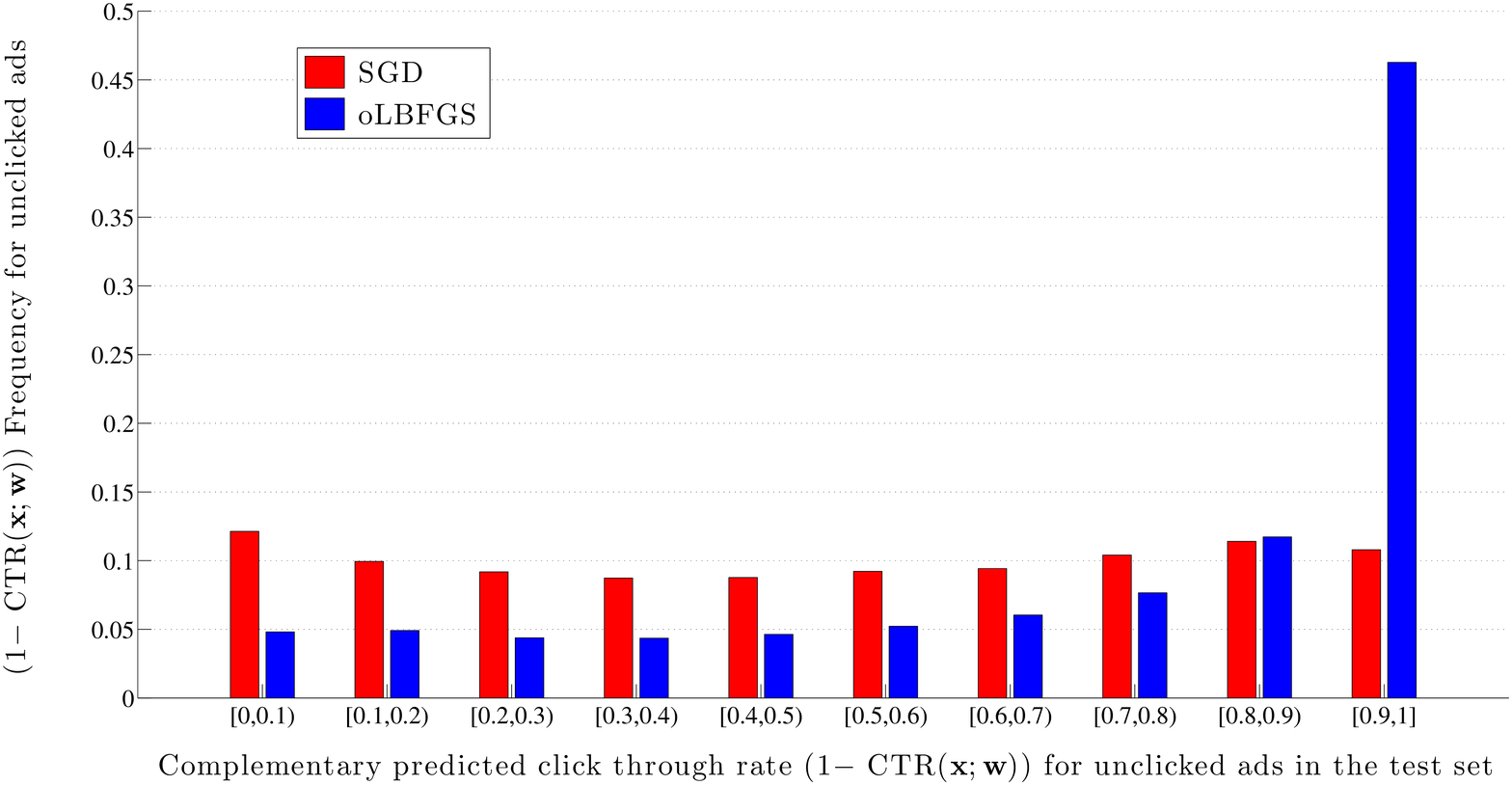}\label{fig:subfig222}}
\caption{Performance of classifier after processing $2\times10^4$ feature vectors with SGD and oLBFGS  for the cost in \eqref{SVM_problem_beta}. Histograms for: (a) predicted click through rate $\text{CTR}(\bbx;\bbw)$ for all clicked ads; and (b) complementary predicted click through rate $1 - \text{CTR}(\bbx;\bbw)$ for all unclicked ads. For an ideal classifier that predicts a click probability $\text{CTR}(\bbx;\bbw) =1$ for all clicked ads and a click probability $\text{CTR}(\bbx;\bbw) =0$ for all unclicked ads the frequency counts in $\ccalH_1(\bbw; a,b)$ and $\ccalH_{-1}(\bbw; a,b)$ would accumulate in the $[0.9,1]$ bin. The classifier computed by oLBFGS is much more accurate than the one computed by SGD. }
\label{fig:subfigureExample}\end{figure}

%
We repeat the experiment of processing $2\times10^4$ feature vectors that we sumamrized in Figure \ref{gholi} but now we use the objective cost in \eqref{SVM_problem_beta} instead of the cost in \eqref{SVM_problem}. We set $\gamma=18.2$ which makes replicated clicked ads as numerous as unclicked ads. The resulting SGD and oLBFGS histograms of the predicted click through rates for all clicked ads and complementary predicted click through rates for all unclicked ads are shown in Figure \ref{fig:subfigureExample}. In particular, Figure \ref{fig:subfig111} shows the histograms of predicted click through rate $\text{CTR}(\bbx;\bbw)$ for all clicked ads after processing $2\times10^4$ training sample points. The modification of the log likelihood cost increases the accuracy of the oLBFGS classifier which is now predicting a click probability $\text{CTR}(\bbx;\bbw)\in[0.9,1]$ for $54.7\%$ of the ads that were indeed clicked. There is also improvement for the SGD classifier but the prediction is much less impressive. Only $15.5\%$ of the clicked ads are associated with a click probability prediction in the interval $[0.9,1]$. This improvement is at the cost of reducing the complementary predicted click through rate $1- \text{CTR}(\bbx;\bbw)$ for the ads that were indeed not clicked. However, the classifier computed by oLBFGS after processing  $2\times10^4$ feature vectors still predicts a probability $1-\text{CTR}(\bbx;\bbw)\in[0.9,1]$  for $46.3\%$ of the unclicked ads. The corresponding frequency for the SGD classifier is $10.8\%$. 

Do note that the relatively high prediction accuracies in Figure \ref{fig:subfigureExample} are a reflection of sample bias to some extent. Since ads were chosen for display because they were deemed likely to be clicked they are not a completely random test set. Still, the point to be made here is that oLBFGS succeeds in finding an optimal classifier when SGD fails. It would take the processing of about $10^6$ feature vectors for SGD to achieve the same accuracy of oLBFGs.


\section{Conclusions}\label{sec_conclusions}

An online limited memory version of the (oL)BFGS algorithm was studied for solving strongly convex optimization problems with stochastic objectives. Almost sure convergence was established by bounding the traces and determinants of curvature estimation matrices under the assumption that sample functions have well behaved Hessians. The convergence rate of oLBFGS was further determined to be at least of order $O(1/t)$ in expectation. This rate is customary of stochastic optimization algorithms which are limited by their ability to smooth out the noise in stochastic gradient estimates. The application of oLBFGS to support vector machines was also developed and numerical tests on synthetic data were provided. The numerical results show that oLBFGS affords important reductions with respect to stochastic gradient descent (SGD) in terms of the number of feature vectors that need to be processed to achieve a target accuracy as well as in the associated execution time. Moreover, oLBFGS also exhibits a significant execution time reduction when compared to other stochastic quasi-Newton methods. These reductions increase with the problem dimension and can become arbitrarily large. A detailed comparison between oLBFGS and SGD for training a logistic regressor in a large scale search engine advertising problem was also presented. The numerical tests show that oLBFGS trains the regressor using less than $1\%$ of the data required by SGD to obtain similar classification accuracy.
\acks{We acknowledge the support of the National Science Foundation (NSF CAREER
CCF-0952867) and the Office of Naval Research (ONR N00014-12-1-0997).}

\appendix

%
\section{Proof of Proposition 1}\label{apx_lbfgs_reduced_cost}

We begin by observing that the $\bbp_u$ sequence in \eqref{eqn_define_pt} is defined so that we can write $\bbp_{u+1} = \hbZ_{t-u-1}\bbp_{u}$ with $\bbp_0=\bbp$. Indeed, use the explicit expression for $\hbZ_{t-u-1}$ in \eqref{Z_rho_definitions} to write the product $\hbZ_{t-u-1}\bbp_{u}$ as
\begin{equation}\label{eqn_multiply_Z}
   \hbZ_{t-u-1}\bbp_{u}
      \ =\ \Big(\bbI - \hat{\rho}_{t-u-1} \hbr_{t-u-1} \bbv_{t-u-1}^{T} \Big)\bbp_{u}
      \ =\ \bbp_u  - \alpha_{u}\hbr_{t-u-1}
      \ =\ \bbp_{u+1},
\end{equation}
where the second equality follows from the definition $\alpha_{u}:=\hat{\rho}_{t-u-1}\bbv_{t-u-1}^{T}\bbp_u$ and the third equality from the definition of the $\bbp_u$ sequence in \eqref{eqn_define_pt}.

Recall now the oLBFGS Hessian inverse approximation expression in \eqref{SLBFGS_update}. It follows that for computing the product $\hbB_{t}^{-1}\bbp$ we can multiply each of the $\tau+1$ summands in the right hand side of \eqref{SLBFGS_update} by $\bbp=\bbp_0$. Implementing this procedure yields 
\begin{align}\label{SLBFGS_update_11}
\hbB_{t}^{-1}\bbp
		= &   \left(\!\hbZ_{t-1}^{T} {\small\dots} \hbZ_{t-\tau}^{T}\!\right) 
                  \!\hbB_{t,0}^{-1}\! 
            \left(\!\hbZ_{t-\tau}{\small \dots}\hbZ_{t-1}\!\right) \bbp_0   
          + \hat{ \rho}_{t-\tau} 
            \left(\!\hbZ_{t-1}^{T} {\small \dots} \hbZ_{t-\tau+1}^{T}\!\right) 
            \bbv_{t-\tau} \bbv_{t-\tau}^{T}
            \left(\!\hbZ_{t-\tau+1}{\small \dots} \hbZ_{t-1}\!\right)\bbp_0      \nonumber \\
       &  + \dots 
          + \hat{\rho}_{t-2} \left( \hbZ_{t-1}^{T}\right) 
            \bbv_{t-2}\bbv_{t-2}^{T}
            \left( \hbZ_{t-1}\right)\bbp_0  
         +  \hat{\rho}_{t-1} 
            \bbv_{t-1}\bbv_{t-1}^{T}\bbp_0.
\end{align}
The fundamental observation in \eqref{SLBFGS_update_11} is that all summands except the last contain the product $\hbZ_{t-1}\bbp_0$. This product cannot only be computed efficiently but, as shown in \eqref{eqn_multiply_Z}, is given by $\bbp_1=\hbZ_{t-1}\bbp_0$. A not so fundamental, yet still important observation, is that the last term can be simplified to $\hat{\rho}_{t-1}   \bbv_{t-1}\bbv_{t-1}^{T}\bbp_0= \alpha_{0}\bbv_{t-1}$ given the definition of $\alpha_0:=\hat{\rho}_{t-1} \bbv_{t-1}^{T}\bbp_0$. Implementing both of these substitutions in \eqref{SLBFGS_update_11} yields
\begin{align}\label{SLBFGS_update_22}
\hbB_{t}^{-1}\bbp
		= &   \left(\!\hbZ_{t-1}^{T} {\small\dots} \hbZ_{t-\tau}^{T}\!\right) 
                  \!\hbB_{t,0}^{-1}\! 
            \left(\!\hbZ_{t-\tau}{\small \dots}\hbZ_{t-2}\!\right) \bbp_1   
          + \hat{ \rho}_{t-\tau} 
            \left(\!\hbZ_{t-1}^{T} {\small \dots} \hbZ_{t-\tau+1}^{T}\!\right) 
            \bbv_{t-\tau} \bbv_{t-\tau}^{T}
            \left(\!\hbZ_{t-\tau+1}{\small \dots} \hbZ_{t-2}\!\right)\bbp_1      \nonumber \\
       &  + \dots 
          + \hat{\rho}_{t-2} \left( \hbZ_{t-1}^{T}\right) 
            \bbv_{t-2}\bbv_{t-2}^{T}
            \bbp_1  
         +  \alpha_0 \bbv_{t-1}.
\end{align}
The structure of \eqref{SLBFGS_update_22} is analogous to the structure of \eqref{SLBFGS_update_11}. In all terms except the last two we require determination of the product $\hbZ_{t-2}\bbp_1$, which, as per \eqref{eqn_multiply_Z} can be computed with $2n$ multiplications and is given by $\bbp_2 = \hbZ_{t-2}\bbp_1$. Likewise, in the second to last term we can simplify the product $\hat{\rho}_{t-2}\bbv_{t-2}\bbv_{t-2}^{T}\bbp_1 = \alpha_1\bbv_{t-2}$ using the definition $\alpha_1=\hat{\rho}_{t-2}\bbv_{t-2}^{T}\bbp_1$. Implementing these substitutions in \eqref{SLBFGS_update_22} yields an expression that is, again, analogous. In all of the resulting summands except the last three we need to compute the product $\hbZ_{t-3}\bbp_2$, which is given by $\bbp_3 = \hbZ_{t-3}\bbp_2$ and in the third to last term we can simplify the product $\hat{\rho}_{t-3}\bbv_{t-3}\bbv_{t-3}^{T}\bbp_2 = \alpha_2\bbv_{t-3}$. Repeating this process keeps yielding terms with analogous structure and, after $\tau-1$ repetitions we simplify \eqref{SLBFGS_update_22} to
\begin{align}\label{decsent_update202}
 \hbB_{t}^{-1}\bbp= & \left(\!\hbZ_{t-1}^{T} {\small\dots}   \hbZ_{t-\tau+1}^{T}\hbZ_{t-\tau}^{T}\!\right) 
             {\hbB_{t,0}}^{-1} \bbp_{\tau } + \left(\!\hbZ_{t-1}^{T} {\small\dots}  \hbZ_{t-\tau+1}^{T}\!\right) 
               \alpha_{\tau-1} \bbv_{t-\tau}  + {\small\dots} +   \hbZ_{t-1}^{T}
                \alpha_{1}  \bbv_{t-2} + \alpha_{0}\bbv_{t-1}.
\end{align}
In the first summand in \eqref{decsent_update202} we can substitute the definition of the first element of the $\bbq_u$ sequence $\bbq_{0}:={\hbB_{t,0}}^{-1} \bbp_{\tau }$. More important, observe that the matrix $\hbZ_{t-1}^{T}$ is the first factor in all but the last summand. Likewise, the matrix $\hbZ_{t-2}^{T}$ is the second factor in all but the last two summands and, in general, the matrix $\hbZ_{t-u}^{T}$ is the $u$th factor in all but the last $u$ summands. Pulling these common factors recursively through \eqref{decsent_update202} it follows that $\hbB_{t}^{-1}\bbp_{t}$ can be equivalently written as
\begin{align}\label{eqn_inner_product_simplification_30}
   &\hbB_{t}^{-1}\bbp 
       =                             \alpha_{0} \bbv_{t-1}
         + \hbZ_{t-1}^{T}     \Bigg[ \alpha_{1} \bbv_{t-2}  
         + \hbZ_{t-2}^{T}     \bigg[ \ldots
                              \Big [ \alpha_{\tau-2}\bbv_{t-\tau+1} 
         + \hbZ_{t-\tau+1}^{T}\big [ \alpha_{\tau-1}\bbv_{t-\tau} 
         + \hbZ_{t-\tau}^{T}\bbq_{0}\big] \Big] \ldots\bigg]\Bigg] .
\end{align}
To conclude the proof we just need to note that the recursive definition of $\bbq_u$ in \eqref{eqn_define_qt} is a computation of the nested elements of \eqref{eqn_inner_product_simplification_30}. To see this consider the innermost element of \eqref{eqn_inner_product_simplification_30} and use the definition of $\beta_{0}:=\hat{\rho}_{t-\tau}\hbr_{t-\tau}^{T}\bbq_0$ to conclude that $\alpha_{\tau-1}\bbv_{t-\tau} + \hbZ_{t-\tau}^{T}\bbq_0$ is given by
\begin{equation}\label{something_useful}
\alpha_{\tau-1}\bbv_{t-\tau} + \hbZ_{t-\tau}^{T}\bbq_0  
	\ =\ \alpha_{\tau-1}\bbv_{t-\tau}  
	     + \bbq_0- \hat{\rho}_{t-\tau}\bbv_{t-\tau}\hbr_{t-\tau}^{T}\bbq_0
	\ =\ \bbq_0 + ( \alpha_{\tau-1} -  \beta_{0})\bbv_{t-\tau}
	\ =\ \bbq_1	
\end{equation}
where in the last equality we use the definition of $\bbq_1$ [cf. \eqref{eqn_define_qt}. Substituting this simplification into \eqref{eqn_inner_product_simplification_30} eliminates the innermost nested term and leads to 
\begin{align}\label{eqn_inner_product_simplification_40}
   &\hbB_{t}^{-1}\bbp 
       =                             \alpha_{0} \bbv_{t-1}
         + \hbZ_{t-1}^{T}     \Bigg[ \alpha_{1} \bbv_{t-2}  
         + \hbZ_{t-2}^{T}     \bigg[ \ldots
                              \Big [ \alpha_{\tau-2}\bbv_{t-\tau+1} 
         + \hbZ_{t-\tau+1}^{T}\bbq_1 \Big] \ldots\bigg]\Bigg] .
\end{align}
Mimicking the computations in \eqref{something_useful} we can see that the innermost term in \eqref{eqn_inner_product_simplification_40} is $\alpha_{\tau-2}\bbv_{t-\tau+1}  + \hbZ_{t-\tau+1}^{T}\bbq_1 = \bbq_2$ and obtain an analogous expression that we can substitute for $\bbq_3$ and so on. Repeating this process $\tau-2$ times leads to the last term being $\hbB_{t}^{-1}\bbp  =  \alpha_{0} \bbv_{t-1}+ \hbZ_{t-1}^{T}\bbq_{\tau-1}$ which we can write as $\alpha_{0} \bbv_{t-1}+ \hbZ_{t-1}^{T}\bbq_{\tau-1} = \bbq_\tau$ by repeating the operations in \eqref{something_useful}. This final observation yields $\hbB_{t}^{-1}\bbp =\bbq_\tau$.

%
\section{Proof of Lemma \ref{lecce}}\label{appx_proof_lemma_1}

As per \eqref{hassan} in Assumption 1 the eigenvalues of the instantaneous Hessian $\hat{ \bbH}(\bbw,{\tbtheta})$ are bounded by $\tdm$ and $\tdM$. Thus, for any given vector $\bbz$ it holds
\begin{equation}\label{liverpool}
\tdm \|\bbz\|^{2} \leq \bbz^{T} \hat{ \bbH}(\bbw,{\tbtheta}) \bbz \leq \tdM \|\bbz\|^{2}.
\end{equation}
For given $\bbw_{t}$ and $\bbw_{t+1}$ define the mean instantaneous Hessian $\hbG_{t}$ as the average Hessian value along the segment $[\bbw_{t},\bbw_{t+1}]$
\begin{equation}\label{average}
\hbG_{t}= \int_{0}^{1}  \hat{ \bbH} \left( \bbw_{t}+\tau(\bbw_{t+1}-\bbw_{t}),{\tbtheta_{t}}\right) d\tau.
\end{equation}
Consider now the instantaneous gradient $\hbs( \bbw_{t}+\tau(\bbw_{t+1}-\bbw_{t}),\tbtheta_{t})$ evaluated at $\bbw_{t}+\tau(\bbw_{t+1}-\bbw_{t})$ and observe that its derivative with respect to $\tau$ is $\partial\hbs\big( \bbw_{t}+\tau(\bbw_{t+1}-\bbw_{t}),\tbtheta_{t}\big)/\partial \tau = \hbH(\bbw_{t}+\tau( \bbw_{t+1} - \bbw_{t}),{\tbtheta_{t}})  (\bbw_{t+1} - \bbw_{t})$. Then according to the fundamental theorem of calculus 
\begin{equation}\label{lazio}
   \int_{0}^{1} \hbH \left( \bbw_{t}+\tau(\bbw_{t+1}-\bbw_{t})\ \! , \ \! {\tbtheta_{t}} \right)  
                        (\bbw_{t+1} - \bbw_{t})\ d\tau  \
      = \
       \hbs(\bbw_{t+1},\tbtheta_{t}) - \hbs(\bbw_{t},\tbtheta_{t}) .
\end{equation}
Using the definitions of the mean instantaneous Hessian $\hbG_{t}$ in \eqref{average} as well as the definitions of the stochastic gradient variations $\hbr_{t}$ and variable variations $\bbv_{t}$ in \eqref{chris} and \eqref{ball} we can rewrite \eqref{lazio} as
\begin{equation}\label{mancity}
 \hbG_{t} \bbv_{t} = \hbr_{t}.
\end{equation}
Invoking \eqref{liverpool} for the integrand in \eqref{average}, i.e., for $\hat{ \bbH}(\bbw,{\tbtheta}) = \hat{ \bbH}\big{(}\bbw_{t}+\tau(\bbw_{t+1}-\bbw_{t}),{\tbtheta} \big{)}$, it follows that for all vectors $\bbz$ the mean instantaneous Hessian $\hbG_{t}$ satisfies
\begin{equation}\label{arsenal}
   \tdm \|\bbz\|^{2} \leq \bbz^{T} \hbG_{t} \bbz \leq \tdM \|\bbz\|^{2}.
\end{equation}
{The claim in \eqref{claim233} follows from \eqref{mancity} and \eqref{arsenal}.} Indeed, consider the ratio of inner products $\hbr_{t}^{T}\bbv_{t}/\bbv_{t}^{T}\bbv_{t}$ and use \eqref{mancity} and the first inequality in \eqref{arsenal} to write
\begin{equation}\label{jose}
   \frac{\hbr_{t}^{T}\bbv_{t}}{\bbv_{t}^{T}\bbv_{t}}
       = \frac{{\bbv_{t}^{T} \hbG_{t} \bbv_{t} }}{\bbv_{t}^{T} \bbv_{t}} \geq \tdm.
\end{equation}
%
It follows that \eqref{claim233} is true for all times $t$. 

To prove \eqref{claim444} we operate \eqref{mancity} and \eqref{arsenal}. Considering the ratio of inner products ${\hbr_{t}^{T}\hbr_{t}}/{\hbr_{t}^{T}\bbv_{t}} $ and observing that \eqref{mancity} states $\hbG_{t}\bbv_{t}=\hbr_{t}$, we can write
\begin{equation}\label{fajr}
\frac{\hbr_{t}^{T}\hbr_{t}}{\hbr_{t}^{T}\bbv_{t}} = \frac{\bbv_{t}^{T}\hbG_{t}^{2}\bbv_{t}}{\bbv_{t}^{T}\hbG_{t}\bbv_{t}} 
\end{equation} 
Since the mean instantaneous Hessian $\hbG_{t}$ is positive definite according to \eqref{arsenal}, we can define $\bbz_{t}=\hbG_{t}^{1/2}\bbv_{t}$. Substituting this observation into \eqref{fajr} we can conclude
\begin{equation}\label{fajr2}
\frac{\hbr_{t}^{T}\hbr_{t}}{\hbr_{t}^{T}\bbv_{t}} = \frac{\bbz_{t}^{T}\hbG_{t}\bbz_{t}}{\bbz_{t}^{T}\bbz_{t}} .
\end{equation} 
Observing \eqref{fajr2} and the inequalities in \eqref{arsenal}, it follows that \eqref{claim444} is true. 

%
\section{Proof of Lemma \ref{lemma_determinant_and_trace_bounds}}\label{appx_lemma3}


We begin with the trace upper bound in \eqref{trace_bound_1}. Consider the recursive update formula for the Hessian approximation $\hbB_{t}$ as defined in \eqref{Hessian_appro_update}. To simplify notation we {define $s$ as a new index such that $s=t-\tau+u$. Introduce this simplified notation in \eqref{Hessian_appro_update} and compute the trace of both sides. Since traces are linear function of their arguments we obtain
\begin{equation}\label{trace_of_akbar}
   \tr\left(\hbB_{t,u+1}\right) =\tr\left( \hbB_{t,u}\right )
   - \tr\left({{\hbB_{t,u} \bbv_{s}\bbv_{s}^{T}{\hbB_{t,u}} }\over{\bbv_{s}^{T}\hbB_{t,u}\bbv_{s}}}\right) 
   + \tr\left( {{\hbr_{s}\hbr_{s}^{T}}\over{\bbv_{s}^{T}\hbr_{s}}}
\right).
\end{equation}
Recall that the trace of a matrix product is independent of the order of the factors to conclude that the second summand of \eqref{trace_of_akbar} can be simplified to 
\begin{equation}\label{trace_of_akbar_10}
   \tr\left( \hbB_{t,u} \bbv_{s}\bbv_{s}^{T}\hbB_{t,u}\right)
     \ =\ \tr\left(\bbv_{s}^{T}\hbB_{t,u} \hbB_{t,u} \bbv_{s}\right)
     \ =\ \bbv_{s}^{T}\hbB_{t,u} \hbB_{t,u} \bbv_{s}
     \ =\ \left\| \hbB_{t,u} \bbv_{s}\right\|^{2},
\end{equation}
where the second equality follows because $\bbv_{s}^{T}\hbB_{t,u} \hbB_{t,u} \bbv_{s}$ is a scalar and the second equality by observing that the term $\bbv_{s}^{T}\hbB_{t,u} \hbB_{t,u} \bbv_{s}$ is the inner product of the vector $\hbB_{t,u} \bbv_{s}$ with itself. Use the same procedure for  the last summand of \eqref{trace_of_akbar} so as to write $\tr(\hbr_{s}\hbr_{s}^{T})=\hbr_{s}^{T}\hbr_{s}=\|\hbr_{s}\|^{2}$. Substituting this latter observation as well as \eqref{trace_of_akbar_10} into \eqref{trace_of_akbar} we can simplify the trace of $\hbB_{t,u+1}$ to
\begin{equation}\label{trace_BFGS}
   \tr\left(\hbB_{t,u+1}\right) =\tr\left( \hbB_{t,u}\right )
      -{{\|\hbB_{t,u}\bbv_{s}\|^{2}}\over {\bbv_{s}^{T} \hbB_{t,u} \bbv_{s}}}
   + {{\| \hbr_{s} \|^{2}} \over {\hbr_{s}^{T}\bbv_{s}}}     .
\end{equation}
The second term in the right hand side of \eqref{trace_BFGS} is negative because, as we have already shown, the matrix $\hbB_{t,u}$ is positive definite. The third term is the one for which we have derived the bound that appears in \eqref{claim444} of Lemma \ref{lecce}. Using this two observations we can conclude that the trace of  $\hbB_{t,u+1}$ can be bounded as
\begin{equation}\label{trace_upper_bound}
   \tr\left(\hbB_{t,u+1}\right) \leq \tr\left(\hbB_{t,u}\right) + \tdM.
\end{equation}
By considering \eqref{trace_upper_bound} as a recursive expression for $u=0,\ldots\tau-1$, we can conclude that 
\begin{align}\label{trace_upper_bound2}
   \tr\left(\hbB_{t,u}\right) &\leq \tr\left(\hbB_{t,0}\right) + u\tdM.
\end{align}
To finalize the proof of \eqref{trace_bound_1} we need to find a bound for the initial trace $\tr(\hbB_{t,0})$. To do so we consider the definition $\hbB_{t,0}= \bbI/\hat{\gamma}_{t}$ with $\hat{\gamma}_{t}$ as given by \eqref{initial_matrix_update_2}. Using this definition of $\hbB_{t,0}$ as a scaled identity it follows that we can write the trace of $\hbB_{t,0}$ as
\begin{equation}\label{initial_Hessian_bound_trace}
   \tr\left(\hbB_{t,0}\right) \ =\  \tr\left(\frac{\bbI}{\hat{\gamma}_{t}}\right) 
                   \ =\  \frac{n}{\hat{\gamma}_{t}}  .  
\end{equation}
Substituting the definition of $\hat{\gamma}_{t}$ into the rightmost side of \eqref{initial_matrix_update_2} it follows that for all times $t\geq 1$,
\begin{equation}\label{polomahi}
  \tr\left(\hbB_{t,0}\right) \ = \  n\, \frac{\hbr_{t-1}^{T}{\hbr_{t-1}}}{\bbv_{t-1}^{T}\hbr_{t-1}}\, 
                    = \  n\, \frac{\|\hbr_{t-1}\|^2}{\bbv_{t-1}^{T}\hbr_{t-1}}\, .
 \end{equation}
The term $\|\hbr_{t-1}\|^2/\bbv_{t-1}^{T}\hbr_{t-1}$ in \eqref{polomahi} is of the same form of the rightmost term in \eqref{trace_BFGS}. We can then, as we did in going from \eqref{trace_BFGS} to \eqref{trace_upper_bound} apply the bound that we provide in \eqref{claim444} of Lemma \ref{lecce} to conclude that for all times $t\geq1$
 \begin{equation}\label{asgharzadeh}
       \tr\left(\hbB_{t,0}\right)\     \leq\ n\tdM.
\end{equation}
Substituting \eqref{asgharzadeh} into \eqref{trace_upper_bound2} and pulling common factors leads to the conclusion that for all times $t\geq1$ and indices $0\leq u\leq \tau$ it holds
\begin{equation}\label{desktop}
   \tr\left(\hbB_{t,u}\right)\ \leq\  (n+u) \tdM .
\end{equation}
The bound in \eqref{trace_bound_1} follows by making $u=\tau$ in \eqref{desktop} and recalling that, by definition, $\hbB_{t} = \hbB_{t,\tau}$. For time $t=0$ we have $\hat{\gamma}_{t}=\hat{\gamma}_{0}=1$ and \eqref{polomahi} reduces to $\tr(\hbB_{t,0}) =  n$ while \eqref{desktop} reduces to $\tr(\hbB_{t,\tau})\leq(1+\tau) \tdM$. Furthermore, for $t<\tau$ we make $\hbB_{t} = \hbB_{t,t}$ instead of $\hbB_{t} = \hbB_{t,\tau}$. In this case the bound in \eqref{desktop} can be tightened to $\tr(\hbB_{t,\tau})\leq  (n+t) \tdM$. Given that we are interested in an asymptotic convergence analysis, these bounds are inconsequential.

We consider now the determinant lower bound in \eqref{det_bound_1}. As we did in \eqref{trace_of_akbar} begin by considering the recursive update in \eqref{Hessian_appro_update} and define $s$ as a new index such that $s=t-\tau+u$ to simplify notation. Compute the determinant of both sides of \eqref{Hessian_appro_update}, factorize $\hbB_{t,u}$ on the right hand side, and use the fact that the determinant of a product is the product of the determinants to conclude that
\begin{align}\label{det_akbar}
  \det\left( \hbB_{t,u+1}\right)  =
  \det \left(\hbB_{t,u}\right) \det \Bigg{(}\bbI-
 {{\bbv_{s}(\hbB_{t,u}\bbv_{s})^{T}}\over
 {\bbv_{s}^{T}\hbB_{t,u}\bbv_{s}}} + {\hbB_{t,u}^{-1}\hbr_{s} {\hbr_{s}^{T}}\over{\hbr_{s}^{T}\bbv_{s}}}
\Bigg{ )}.
\end{align}
To simplify the right hand side of \eqref{det_akbar} we should first know that for any vectors $\bbu_1$, $\bbu_2$, $\bbu_3$ and $\bbu_4$, we can write $\det (\bbI + \bbu_1 \bbu_2^{T}+\bbu_3 \bbu_4^{T})=(1+\bbu_1^{T}\bbu_2)(1+\bbu_3^{T}\bbu_4)-(\bbu_1^{T}\bbu_4)(\bbu_2^{T}\bbu_3)$ -- see, e.g., \cite{Li}, Lemma $3.3$). Setting $\bbu_1=\bbv_{s}$, $\bbu_2= {{\hbB_{t,u}\bbv_{s}}/
 {\bbv_{s}^{T}\hbB_{t,u}\bbv_{s}}} $, $\bbu_3=\hbB_{t,u}^{-1}\hbr_{s} $ and $\bbu_4= {{\hbr_{s}}/{\hbr_{s}^{T}\bbv_{s}}}$, implies that $\det (\bbI + \bbu_1 \bbu_2^{T}+\bbu_3 \bbu_4^{T})$ is equivalent to the last term in the right hand side of  \eqref{det_akbar}. Applying these substitutions implies that $(1+\bbu_1^{T}\bbu_2)= 1-\bbv_{s}^{T} {{\hbB_{t,u}\bbv_{s}}/{\bbv_{s}\hbB_{t,u}\bbv_{s}}}=0$ and $\bbu_1^{T}\bbu_4 =-\bbv_{s}^{T} {{\hbr_{s}}/{\hbr_{s}^{T}\bbv_{s}}}=-1 $. Hence, the term $\det (\bbI + \bbu_1 \bbu_2^{T}+\bbu_3 \bbu_4^{T})$ can be simplified as $\bbu_{2}^{T}\bbu_{3}$. By this simplification we can write the right hand side of  \eqref{det_akbar} as
\begin{align}\label{det_kave2}
&\det\! \left[\bbI-
 {{\bbv_{s}(\hbB_{t,u}\bbv_{s})^{T}}\over
 {\bbv_{s}^{T}\hbB_{t,u}\bbv_{s}}} +  {\hbB_{t,u}^{-1}\hbr_{s}{\hbr_{s}^{T}}\over{\hbr_{s}^{T}\bbv_{s}}}
 \right]=   {{\left(\hbB_{t,u}\bbv_{s}\right)^{T}}\over
 {\bbv_{s}^{T}\hbB_{t,u}\bbv_{s}}} 
 \hbB_{t,u}^{-1}\hbr_{s}  .
\end{align}
To further simplify \eqref{det_kave2} write $(\hbB_{t,u}\bbv_{s})^{T} = \bbv_{s}^T\hbB_{t,u}^T$ and observer that since $\hbB_{t,u}$ is symmetric we have $\hbB_{t,u}^T \hbB_{t,u}^{-1} = \hbB_{t,u} \hbB_{t,u}^{-1} = \bbI$. Therefore,  
\begin{equation}\label{det_kave}
  \det\! \left[\bbI-
 {{\bbv_{s}(\hbB_{t,u}\bbv_{s})^{T}}\over
 {\bbv_{i}^{T}\hbB_{t,u}\bbv_{s}}} +  {\hbB_{t,u}^{-1}\hbr_{s}{\hbr_{s}^{T}}\over{\hbr_{s}^{T}\bbv_{s}}}
 \right]= 
  {{\hbr_{s}^{T}\bbv_{s}}\over 
{\bbv_{s}^{T}  \hbB_{t,u}  \bbv_{s}}}.
\end{equation}
Substitute the simplification in \eqref{det_kave} for the corresponding factor in \eqref{det_akbar}. Further multiply and divide the right hand side by the nonzero norm $\|\bbv_s\|$ and regroup terms to obtain
\begin{equation}\label{det_BFGS}
   \det\left(\hbB_{t,u+1}\right) = \det\left(\hbB_{t,u}\right) 
   \frac{\hbr_{s}^{T}\bbv_{s}}
        {\|\bbv_{s}\|}
   \frac{\|\bbv_{s}\|}
        {\bbv_{s}^{T}\hbB_{t,u}\bbv_{s}}.
\end{equation}
To bound the third factor in \eqref{det_BFGS} observe that the largest possible value for the normalized quadratic form ${\bbv_{s}^{T}\hbB_{t,u}\bbv_{s}}/\|\bbv_{s}\|^2$ occurs when $\bbv_{s}$ is an eigenvector of $\hbB_{t,u}$ associated with its largest eigenvalue. In such case the value attained is precisely the largest eigenvalue of $\hbB_{t,u}$ implying that we can write
\begin{equation}\label{eqn_det_bound_pf_70}
   \frac {\bbv_{s}^{T}\hbB_{t,u}\bbv_{s}}
         {\|\bbv_{s}\|}
   \leq \lambda_{\max}\left(\hbB_{t,u}\right).
\end{equation}
But to bound the largest eigenvalue $\lambda_{\max}(\hbB_{t,u})$ we can just use the fact that the trace of a matrix coincides with the sum of its eigenvalues. In particular, it must be that $\lambda_{\max}(\hbB_{t,u}) \leq \tr(\hbB_{t,u})$ because all the eigenvalues of the positive definite matrix $\hbB_{t,u}$ are positive. Combining this observation with the trace bound in \eqref{desktop} leads to
\begin{equation}\label{eqn_det_bound_pf_80}
   \frac {\bbv_{s}^{T}\hbB_{t,u}\bbv_{s}}
         {\|\bbv_{s}\|}
   \ \leq\ \tr\left(\hbB_{t,u}\right)
   \ \leq\  (n+u) \tdM .   
\end{equation}
We can also bound the second factor in the right hand side of \eqref{det_BFGS} if we reorder the inequality in \eqref{claim233} of Lemma \ref{lecce} to conclude that ${\hbr_{s}^{T}\bbv_{s}}/ {\|\bbv_{s}\|}\leq\tdm$. This bound, along with the inverse of the inequality in \eqref{eqn_det_bound_pf_80} substituted in \eqref{det_BFGS} leads to
\begin{align}\label{middle2}
   \det\left(\hbB_{t,u+1}\right) 
      \geq \frac{\tdm}{n\tdM+u\tdM}\det\left(\hbB_{t,u}\right) .
\end{align}
Apply \eqref{middle2} recursively between indexes $u=0$ and $u=\tau-1$ and further observing that $u\leq\tau$ in all of the resulting factors it follows that
\begin{align}\label{det_BFGS_bound}
   \det\left(\hbB_{t,\tau}\right)\  \geq\  
    \left[\frac{\tdm}{( n+ \tau) \tdM}\right]^{\tau}
    \det\left(\hbB_{t,0}\right) .
\end{align}
To finalize the derivation of \eqref{det_bound_1} we just need to bound the determinant of the initial curvature approximation matrix $\hbB_{t,0}$. To do so we consider, again, the definition $\hbB_{t,0}= \bbI/\hat{\gamma}_{t}$ with $\hat{\gamma}_{t}$ as given by \eqref{initial_matrix_update_2}. Using this definition of $\hbB_{t,0}$ as a scaled identity it follows that we can write the determinant of $\hbB_{t,0}$ as
\begin{equation}\label{initial_Hessian_bound_determinant}
   \det\left(\hbB_{t,0}\right) \ =\  \det\left(\frac{\bbI}{\hat{\gamma}_{t}}\right) 
                   \ =\  \frac{1}{\hat{\gamma}_{t}^n}  .  
\end{equation}
Substituting the definition of $\hat{\gamma}_{t}$ into the rightmost side of \eqref{initial_Hessian_bound_determinant} it follows that for all times $t\geq 1$,
\begin{equation}\label{polomahi}
   \det\left(\hbB_{t,0}\right) 
      \ = \ \left(\frac{\hbr_{t-1}^{T}{\hbr_{t-1}}}{\bbv_{t-1}^{T}\hbr_{t-1}}\right)^n
      \ = \ \left(\frac{\|\hbr_{t-1}\|^2}{\bbv_{t-1}^{T}\hbr_{t-1}}\right)^n .
 \end{equation}
The term $\|\hbr_{t-1}\|^2/\bbv_{t-1}^{T}\hbr_{t-1}$ has lower and upper bounds that we provide in \eqref{claim444} of Lemma \ref{lecce}. Using the lower bound in \eqref{claim444} it follows that the initial determinant must be such that
\begin{equation}\label{topoloo}
   \det\left(\hbB_{t,0}\right) \geq \tdm ^{n} .
\end{equation}
Substituting the upper bound in \eqref{topoloo} for the determinant of the initial curvature approximation matrix in \eqref{det_BFGS_bound} allows us to conclude that for all times $t\geq1$
\begin{align}\label{det_BFGS_bound2}
   \det\left(\hbB_{t,\tau}\right)\  \geq\  
    \tdm^n \left[\frac{\tdm}{( n+ \tau) \tdM}\right]^{\tau} .
\end{align}
The bound in \eqref{det_bound_1} follows by making $u=\tau$ in \eqref{det_BFGS_bound2} and recalling that, by definition, $\hbB_{t} = \hbB_{t,\tau}$. At time $t=0$ the initialization constant is set to $\hat{\gamma}_{t}=\hat{\gamma}_{0}=1$ and \eqref{topoloo} reduces to $\det(\hbB_{t,0}) =  1$ while \eqref{det_BFGS_bound2} reduces to $\det(\hbB_{t,\tau})\leq[\tdm/( 1+ \tau) \tdM]^{\tau}$. For $t<\tau$ we make $\hbB_{t} = \hbB_{t,t}$ instead of $\hbB_{t} = \hbB_{t,\tau}$. In this case the bound in \eqref{desktop} can be tightened to $\det(\hbB_{t,\tau})\leq\tdm[\tdm^n/( 1+ \tau) \tdM]^{\tau}$. As in the case of the trace, given that we are interested in an asymptotic convergence analysis, these bounds are inconsequential.

%
\section{Proof of Lemma \ref{norooz}}\label{appx_lemma4}

We first prove the upper bound inequality in \eqref{claim888}. Let us define $\lambda_{i}$ as the $i$th largest eigenvalue of matrix $\hbB_{t}$. Considering the result in Lemma \ref{lemma_determinant_and_trace_bounds} that $\tr(\hbB_{t})\leq(n+\tau) \tdM$ for all steps $t\geq1$, we obtain that the sum of eigenvalues of the Hessian approximation $\hbB_{t}$ satisfy
\begin{equation}\label{sum_eig_bound1}
   \sum_{i=1}^{n} \lambda_{i} \ =\ \tr\left(\hbB_{t}\right) \ \leq\ (n+\tau) \tdM.
\end{equation}
Considering the upper bound for the sum of eigenvalues in \eqref{sum_eig_bound1} and recalling that all the eigenvalues of the matrix $\hbB_{t}$ are positive because $\hbB_{t}$ is positive definite, we can conclude that each of the eigenvalues of $\hbB_{t}$ is less than the upper bound for their sum in \eqref{sum_eig_bound1}. We then have $\lambda_{i}\leq(n+\tau) \tdM$ for all $i$ from where the right inequality in \eqref{claim888} follows.

To prove the lower bound inequality in \eqref{claim888} consider the second result of Lemma \ref{lemma_determinant_and_trace_bounds} which provides a lower bound for the determinant of the Hessian approximation matrix $\hbB_{t}$. According to the fact that determinant of a matrix is the product of its eigenvalues, it follows that the product of the eigenvalues of $\hbB_{t}$ is bounded below by the lower bound in \eqref{det_bound_1}, or, equivalently, $\prod_{i=1}^{n} \lambda_{i} \geq {\tdm^{n+\tau}}/{[( n+ \tau) \tdM]^{\tau}} $. Hence, for any given eigenvalue of $\hbB_{t}$, say $\lambda_{j}$, we have
\begin{equation}\label{eigenvalues_product}
\lambda_{j}\ \geq\ 
	\frac{1}{\prod_{k=1,k\neq j}^{n} \lambda_{k} } \times \frac{\tdm^{n+\tau}}{\left[( n+ \tau) \tdM\right]^{\tau}}.
\end{equation}
But in the first part of this proof we have already showed that ${(n +\tau) \tdM}$ is a lower bound for the eigenvalues of $\hbB_{t}$. We can then conclude that the product of the $n-1$ eigenvalues $\prod_{k=1,k\neq j}^{n} \lambda_{k}$ is bounded above by $[{(n +\tau) \tdM}]^{n-1}$, i.e.,
\begin{equation}\label{khastam}
\prod_{k=1,k\neq j}^{n} \lambda_{k} \leq  \left[{(n +\tau) \tdM}\right]^{n-1}.
\end{equation} 
Combining the inequalities in \eqref{eigenvalues_product} and \eqref{khastam} we conclude that for any specific eigenvalue of $\hbB_{t}$ can be lower bounded as
\begin{equation}\label{lower_bound_for_eigenvalues}
\lambda_{j}\ \geq\ \frac{1}{ \left[{(n +\tau) \tdM}\right]^{n-1}} \times  \frac{\tdm^{n+\tau}}{\left[( n+ \tau) \tdM\right]^{\tau}}.
\end{equation}
Since inequality \eqref{lower_bound_for_eigenvalues} is true for all the eigenvalues of  $\hbB_{t}$, the left inequality \eqref{claim888} holds true.

\section{Proof of Lemma \ref{helpful}}\label{appx_lemma5}

The proof is standard in stochastic optimization and provided here for reference. As it follows from Assumption 1 the eigenvalues of the Hessian $\bbH(\bbw_{t})=\mbE_{\tbtheta}[\hbH(\bbw_{t},\tbtheta_{t})] = \nabla_{\bbw}^{2} F(\bbw_{t})$ are bounded between $0<m$ and $M<\infty$ as stated in \eqref{bbb}. Taking a Taylor's expansion of the function $F(\bbw)$ around $\bbw=\bbw_{t}$ and using the upper bound in the Hessian eigenvalues we can write
\begin{align}\label{taylor_upper_bound_22}
   F(\bbw_{t+1})\ \leq\ & F(\bbw_{t}) +\nabla F(\bbw_{t})^{T}(\bbw_{t+1}-\bbw_{t}) +  {{M}\over{2}}\|{\bbw_{t+1}-\bbw_{t}}\|^{2}.
\end{align}
From the definition of the oLBFGS update in \eqref{eqn_bfgs_descent} we can write the difference of two consecutive variables   $\bbw_{t+1}-\bbw_{t} $ as $ -\epsilon_{t}\hbB_{t}^{-1}  \hbs(\bbw_{t},\tbtheta_{t})$. Making this substitution in \eqref{taylor_upper_bound_22}, taking expectation with $\bbw_{t}$ given in both sides of the resulting inequality, and observing the fact that when $\bbw_{t}$ is given the Hessian approximation $\hbB_{t}^{-1}$ is deterministic we can write
\begin{equation}\label{pirloitalia}
   \E{F(\bbw_{t+1})\given \bbw_{t}}  \leq\  F(\bbw_{t})  
          - \epsilon_{t}\nabla F(\bbw_{t})^{T}\hbB_{t}^{-1}
                 \E{\hbs(\bbw_{t},\tbtheta_{t})\given\bbw_{t}} 
          + \frac{\epsilon^{2}M}{2} \ \!
              \E{\left\|\hbB_{t}^{-1}\hbs(\bbw_{t},\tbtheta_{t})\right\|^{2} 
                    \given  \bbw_{t}}. 
\end{equation}
We proceed to bound the third term in the right hand side of \eqref{pirloitalia}. Start by observing that the 2-norm of a product is not larger than the product of the 2-norms and that, as noted above, with $\bbw_{t}$ given the matrix $\hbB_{t}^{-1}$ is also given to write 
\begin{equation}\label{pedar_salavati}
    \E{\left\| \hbB_{t}^{-1}\hbs(\bbw_{t},\tbtheta_{t})\right\|^{2} 
        \given \bbw_{t}} 
    \ \leq\ \left\|\hbB_{t}^{-1} \right\|^{2}   \
           \E{\left\|  \hbs(\bbw_{t},\tbtheta_{t})\right\|^{2} \!\! \given \! \bbw_{t}}
\end{equation}
Notice that, as stated in \eqref{eqn_eigenvalue_critical_bounds}, $1/c$ is an upper bound for the eigenvalues of $\hbB_{t}^{-1}$. Further observe that the second moment of the norm of the stochastic gradient is bounded by $  \E{\| \hbs(\bbw_{t},\tbtheta_{t})\|^{2} \given \bbw_{t}} \leq S^{2}$, as stated in Assumption 2. These two upper bounds  substituted in \eqref{pedar_salavati} yield
\begin{equation}\label{dash_akol}
    \E{\left\| \hbB_{t}^{-1}\hbs(\bbw_{t},\tbtheta_{t})\right\|^{2} 
           \given \bbw_{t}}\leq \frac{S^2}{c^{2}}.
\end{equation}
Substituting the upper bound in \eqref{dash_akol} for the third term of \eqref{pirloitalia} and further using the fact that $ \E{\hbs(\bbw_{t},\tbtheta_{t})\given\bbw_{t}}=\nabla F(\bbw_{t})$ in the second term leads to
\begin{align}\label{lavashak}
    \E{F(\bbw_{t+1})\given \bbw_{t}}  
          \leq  F(\bbw_{t})   -  \epsilon_{t} \nabla F(\bbw_{t})^{T} \hbB_{t}^{-1}      \nabla F(\bbw_{t}) 
          +\frac{\epsilon_{t}^{2}MS^{2}}{{2c^{2}}}  .
\end{align}
We now find a lower bound for the second term in the right hand side of \eqref{lavashak}. As stated in \eqref{eqn_eigenvalue_critical_bounds}, $1/C$ is a lower bound for the eigenvalues of  $\hbB_{t}^{-1}  $. This lower bound implies that
\begin{equation}\label{inner_product_lower_bound}
   \nabla F(\bbw_{t})^{T} \hbB_{t}^{-1}\nabla F(\bbw_{t}) 
       \geq \frac{1}{C} \|\nabla F(\bbw_{t}) \|^{2} 
\end{equation}  
By substituting the lower bound in \eqref{inner_product_lower_bound} for the corresponding summand in \eqref{lavashak} the result in \eqref{pedarsag} follows.
%
%


\section{Proof of Theorem \ref{convg}}\label{appx_theorem_6}


The proof uses the relationship in the statement \eqref{pedarsag} of Lemma \ref{helpful} to build a supermartingale sequence. This is also a standard technique in stochastic optimization and provided here for reference. To construct the supermartingale sequence define the stochastic process $\alpha_t$ with values 
\begin{equation}\label{alp}
   \alpha_t := F(\bbw_{t})+ \frac{MS^{2}}{{2c^{2}}} \sum_{u=t}^{\infty}  {{\epsilon_{u}^{2}}}.
\end{equation}
Observe that $\alpha_t$ is well defined because the $\sum_{u=t}^{\infty}  {{\epsilon_{u}^{2}}}<\sum_{u=0}^{\infty}  {{\epsilon_{u}^{2}}}<\infty$ is summable. Further define the sequence $\beta_t$ with values
\begin{equation}\label{beta}
   \beta_t :=\  \frac{\epsilon_{t}}{C}\ \| \nabla F(\bbw_{t})\|^{2} .
\end{equation}
Let now $\ccalF_t$ be a sigma-algebra measuring $\alpha_t$, $\beta_t$, and $\bbw_t$. The conditional expectation of $\alpha_{t+1}$ given $\ccalF_t$ can be written as
\begin{equation}\label{eqn_theo_convg_pf_30}
   \E{\alpha_{t+1} \given \ccalF_t} 
       =  \E{F(\bbw_{t+1}) \given \ccalF_t}
             + \frac{MS^{2}}{{2c^{2}}}  \sum_{u=t+1}^{\infty}  {{\epsilon_{u}^{2}}},
\end{equation}
because the term $({MS^{2}}/{{2c^{2}}} ) \sum_{u=t+1}^{\infty}  {{\epsilon_{u}^{2}}}$ is just a deterministic constant. Substituting \eqref{pedarsag} of Lemma \ref{helpful} into \eqref{eqn_theo_convg_pf_30} and using the definitions of $\alpha_t$ in \eqref{alp} and $\beta_t$ in \eqref{beta} yields
\begin{equation}\label{martin}
   \E{\alpha_{t+1} \given \alpha_t} \ \leq\ \alpha_t - \beta_t
\end{equation}
Since the sequences $\alpha_t$ and $\beta_t$ are nonnegative it follows from \eqref{martin} that they satisfy the conditions of the supermartingale convergence theorem -- see e.g. (Theorem E$7.4$ in \cite{Solo}) . Therefore, we conclude that: (i) The sequence $\alpha_t$ converges almost surely. (ii) The sum $\sum_{t=0}^{\infty}\beta_t < \infty$ is almost surely finite. Using the explicit form of $\beta_t$ in \eqref{beta} we have that $\sum_{t=0}^{\infty}\beta_t < \infty$ is equivalent to
\begin{equation}\label{psg}
   \sum_{t=0}^{\infty}\frac{\epsilon_{t}}{C}\ \| \nabla F(\bbw_{t})\|^{2} < \infty, 
       \qquad\text{a.s.}
\end{equation}
Since the sequence of stepsizes is nonsummable, for \eqref{psg} to be true we need to have a vanishing subsequence embedded in $\| \nabla F(\bbw_{t})\|^{2}$. By definition, this implies that the limit infimum of the sequence $\| \nabla F(\bbw_{t})\|^{2}$ is null almost surely,
\begin{equation}\label{pppp}
   \liminf_{t \to \infty}   \| \nabla F(\bbw_{t})\|^{2}  =  0,  \qquad\text{a.s.}
\end{equation}
To transform the gradient bound in \eqref{pppp} into a bound pertaining to the squared distance to optimality  $\| \bbw_{t} -\bbw^{*} \|^2$ simply observe that the lower bound $m$ on the eigenvalues of $\textbf{H}(\bbw_{t})$ applied to a Taylor's expansion around the optimal argument $\bbw^*$ implies that
\begin{equation}\label{lower_bound_taylor_expansion}
F(\bbw^*)\ \geq\ F(\bbw_{t})+  \nabla F(\bbw_{t})^{T} (\bbw^*-\bbw_{t}) 
		+\ \frac{m}{2} \|\bbw^*-\bbw_{t}\|^2 .
\end{equation}
Observe now that since $\bbw^*$ is the minimizing argument of $F(\bbw)$ we must have  $F(\bbw^*) -\ F(\bbw_{t}) \leq 0$ for all $\bbw$. Using this fact and reordering terms we simplify \eqref{lower_bound_taylor_expansion} to
\begin{equation}\label{eqn_theo_convg_pf_80}
 \frac{m}{2}\  \|\bbw^*-\bbw_{t}\|^2\  \leq\ \nabla F(\bbw_{t})^{T} (\bbw_t-\bbw^*) .
\end{equation}   
Further observe that the Cauchy-Schwarz inequality implies that $\nabla F(\bbw_{t})^{T} (\bbw_{t} - \bbw^*)\leq\|\nabla F(\bbw_{t})\|\|\bbw_{t}-\bbw^{*}\|$. Substitution of this bound in \eqref{eqn_theo_convg_pf_80} and simplification of a $\|\bbw^*-\bbw_{t}\|$ factor yields
\begin{equation}\label{qqqq}
   \frac{m}{2}\| \bbw_{t} -\bbw^{*} \|\  \leq\   \|\nabla F(\bbw_{t})\|.
\end{equation}
Since the limit infimum of $\|\nabla F(\bbw_{t})\|$ is null as stated in \eqref{pppp} the result in \eqref{eqn_convg} follows from considering the bound in \eqref{qqqq} in the limit as the iteration index $t\to\infty$. 
%


\section{Proof of Theorem \ref{theo_convergence_rate}}\label{appx_theorem_7}

The proof follows along the lines of (\cite{AryanAleTSP}) and is presented here for completeness. Theorem \ref{theo_convergence_rate} claims that the sequence of expected objective values $\E {F(\bbw_{t})}$ approaches the optimal objective $F(\bbw^*)$ at a linear rate $O(1/t)$. Before proceeding to the proof of Theorem \ref{theo_convergence_rate} we repeat a technical lemma of (\cite{AryanAleTSP}) that provides a sufficient condition for a sequence $u_t$ to exhibit a linear convergence rate.

%
\begin{lemma}[\cite{AryanAleTSP}]\label{lecce22}
Let $a>1$, $b>0$ and $t_0 > 0$ be given constants and $u_{t}\geq 0$ be a nonnegative sequence that satisfies the inequality
\begin{equation}\label{claim23}
   u_{t+1} \leq \left( 1- \frac{a}{t+t_0} \right) u_{t} 
                 + \frac{b}{{(t+t_0)}^{2}}\ ,
\end{equation}
{for all times $t\geq0$}. The sequence $u_t$ is then bounded as
\begin{equation}\label{lemma3_claim}
u_{t} \leq\  \frac{Q}{t+t_{0}},
\end{equation}
for all times $t\geq0$, where the constant $Q$ is defined as
\begin{equation}\label{convergence_parameter}
   Q:=\max \left[\frac{b}{a-1},\ t_{0} u_{0}  \right] .
\end{equation}
\end{lemma}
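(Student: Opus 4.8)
The plan is to prove the bound \eqref{lemma3_claim} by induction on $t$, using the recursion \eqref{claim23} together with the two properties that the definition \eqref{convergence_parameter} builds into $Q$, namely $Q \ge t_0 u_0$ and $Q \ge b/(a-1)$. The base case $t=0$ is immediate: the claim \eqref{lemma3_claim} then reads $u_0 \le Q/t_0$, which follows from $Q \ge t_0 u_0$ after dividing by $t_0 > 0$.

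For the inductive step I would assume $u_t \le Q/(t+t_0)$ and abbreviate $\beta := t+t_0$. Substituting the inductive hypothesis into \eqref{claim23}, and using that the contraction factor $1 - a/\beta$ is nonnegative so that multiplying the inequality $u_t \le Q/\beta$ by it preserves the direction, I would obtain
$$u_{t+1} \ \le\ \left(1 - \frac{a}{\beta}\right)\frac{Q}{\beta} + \frac{b}{\beta^2} \ =\ \frac{Q}{\beta} - \frac{aQ - b}{\beta^2}.$$
It then remains to verify the purely algebraic inequality $Q/\beta - (aQ-b)/\beta^2 \le Q/(\beta+1)$. Since $Q/\beta - Q/(\beta+1) = Q/\big(\beta(\beta+1)\big)$, clearing the positive denominator $\beta^2$ shows this is equivalent to $Q\beta/(\beta+1) \le aQ - b$. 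I would close this by chaining two elementary facts: first $Q\beta/(\beta+1) < Q$ because $\beta/(\beta+1) < 1$; and second $Q \le aQ - b$, which is exactly $(a-1)Q \ge b$, i.e. $Q \ge b/(a-1)$, guaranteed by \eqref{convergence_parameter} since $a>1$. Combining the two gives $u_{t+1} \le Q/(\beta+1) = Q/\big((t+1)+t_0\big)$, which completes the induction.

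The step I expect to require the most care is the nonnegativity of the factor $1 - a/(t+t_0)$ that is used to pass the inductive hypothesis through the multiplication: when $t+t_0 < a$ this factor is negative and the direction of the inequality would flip, so the clean chain above does not apply verbatim. The induction proceeds uniformly precisely in the regime $t+t_0 \ge a$, where the factor lies in $[0,1)$, and this is the regime relevant to the intended application in the proof of Theorem \ref{theo_convergence_rate} (where $t_0$ is chosen accordingly). All the remaining manipulations are routine algebra with positive quantities, so once the sign of the contraction factor is pinned down the result follows directly.
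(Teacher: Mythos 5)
Your proposal is the paper's own proof: the same induction, with the base case coming from $Q \ge t_0 u_0$ and the inductive step from $Q(a-1) \ge b$. The closing algebra differs only cosmetically — the paper collects terms to get $u_{s+1} \le Q\,(s+t_0-1)/(s+t_0)^2$ and then invokes $(\beta-1)(\beta+1) = \beta^2-1 \le \beta^2$, whereas you compare $Q/\beta - Q/(\beta+1) = Q/\big(\beta(\beta+1)\big)$ directly; these are the same inequality rearranged.

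The sign caveat you flag, however, deserves emphasis, because the paper's proof does \emph{not} handle it: it performs exactly the unguarded substitution you describe, multiplying the induction hypothesis by $1 - a/(s+t_0)$ with no check on its sign. And the issue is not cosmetic — without an additional hypothesis the lemma as stated is false. Take $a=10$, $b=1$, $t_0=1$ and the nonnegative sequence $u_0=0$, $u_1 = 1/16$, $u_t = 0$ for $t \ge 2$. The recursion \eqref{claim23} holds at every $t\ge 0$: at $t=0$ the right-hand side is $(1-10)\cdot 0 + 1 = 1 \ge u_1$; at $t=1$ it is $(1-5)\cdot\tfrac{1}{16} + \tfrac{1}{4} = 0 \ge u_2$; and for $t\ge 2$ it is $1/(t+1)^2 > 0 = u_{t+1}$. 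Yet $Q = \max\{1/9,\ 0\} = 1/9$, so \eqref{lemma3_claim} at $t=1$ demands $u_1 \le 1/18$, which fails since $u_1 = 1/16$. The correct fix is the one you identify: assume $t_0 \ge a$, so that $1 - a/(t+t_0) \ge 0$ for every $t \ge 0$ and the induction closes uniformly. But your parenthetical claim that $t_0$ is ``chosen accordingly'' in the application is optimistic: in the proof of Theorem \ref{theo_convergence_rate} the identification is $a = 2m\epsilon_0 T_0/C$ and $t_0 = T_0$, and the stated hypothesis $2m\epsilon_0 T_0/C > 1$ does not imply $T_0 \ge a$ (that would require $\epsilon_0 \le C/(2m)$), so the gap propagates to the theorem unless the stepsize parameters are further constrained. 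In short, your argument matches the paper's, and the caveat you raise is a genuine flaw in the lemma as stated rather than a defect of your write-up.
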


%
\begin{proof}
We prove \eqref{lemma3_claim} using induction. To prove the claim for {$t=0$} simply observe that the definition of $Q$ in \eqref{convergence_parameter} implies that
\begin{equation}\label{eqn_lecce_pf_10}
   Q:=\max \left[\frac{b}{a-1},\ t_{0} u_{0}  \right] \geq\ t_{0} u_{0},
\end{equation}
because the maximum of two numbers is at least equal to both of them. By rearranging the terms in \eqref{eqn_lecce_pf_10} we can conclude that 
\begin{equation}\label{first_step_of_induction}
    u_0\ \leq\  \frac{Q}{t_{0}}.
\end{equation}
Comparing \eqref{first_step_of_induction} and \eqref{lemma3_claim} it follows that the latter inequality is true for $t=0$.

Introduce now the induction hypothesis that \eqref{lemma3_claim} is true for $t=s$. To show that this implies that \eqref{lemma3_claim} is also true for $t=s+1$ substitute the induction hypothesis $u_s\leq Q/(s+t_0)$ into the recursive relationship in \eqref{claim23}. This substitution shows that $u_{s+1}$ is bounded as
\begin{equation}\label{hassan_hassan}
    u_{s+1} \leq 
        \left( 1- \frac{a}{s+t_0} \right) \frac{Q}{s+t_{0}}
        +\frac{b}{{(s+t_0)}^{2}} \ .
\end{equation}
Observe now that according to the definition of $Q$ in \eqref{convergence_parameter}, we know that $b/(a-1) \leq Q$ because $Q$ is the maximum of $b/(a-1)$ and $t_{0}u_{0}$. Reorder this bound to show that $b\leq Q(a-1)$ and substitute into \eqref{hassan_hassan} to write
\begin{align}\label{eqn_lecce_pf_40}
   u_{s+1} \leq 
        \left( 1- \frac{a}{s+t_0} \right) \frac{Q}{s+t_{0}}  
        +\frac{(a-1) Q}{{(s+t_0)}^{2}}  \ .
\end{align}
Pulling out $Q/(s + t_0)^2$ as a common factor and simplifying and reordering terms it follows that \eqref{eqn_lecce_pf_40} is equivalent to
\begin{align}\label{jhn}
   u_{s+1} 
       \ \leq\  \frac{Q \big[s+t_0 -a +(a-1)\big]}{{(s+t_0)}^{2}} 
       \ =   \  \frac{s+t_0-1}{{(s+t_0)}^{2}} \ \! Q .
\end{align}
To complete the induction step use the difference of squares formula for $(s+t_0)^2 - 1$ to conclude that 
\begin{equation}\label{algebra}
   \big[ (s+t_0) - 1\big]     \big[ (s+t_0) + 1\big]  
       \ =   \ (s+t_0)^2 - 1 
       \ \leq\ (s+t_0)^2.
\end{equation}
Reordering terms in \eqref{algebra} it follows that $\big[ (s+t_0) - 1\big]/ (s+t_0)^2 \leq 1/\big[ (s+t_0) + 1\big]$, which upon substitution into \eqref{jhn} leads to the conclusion that 
\begin{align}\label{eqn_lecce_pf_80}
    u_{s+1} \leq  \frac{Q}{s+t_0 + 1}.
\end{align}
Eq. \eqref{eqn_lecce_pf_80} implies that the assumed validity of \eqref{lemma3_claim} for $t=s$ implies the validity of \eqref{lemma3_claim} for $t=s+1$. Combined with the validity of \eqref{lemma3_claim} for $t=0$, which was already proved, it follows that \eqref{lemma3_claim} is true for all times $t\geq0$.
 \end{proof}

%
Lemma \ref{lecce22} shows that satisfying \eqref{claim23} is sufficient for a sequence to have the linear rate of convergence specified in \eqref{lemma3_claim}. In the following proof of Theorem \ref{theo_convergence_rate} we show that if the stepsize sequence parameters $\epsilon_{0}$ and $T_{0}$ satisfy $2\epsilon_{0}T_{0}/C>1$ the sequence $\E{F(\bbw_{t})}-F(\bbw^*)$ of expected optimality gaps satisfies \eqref{claim23} with {$a=2\epsilon_{0}T_{0}/C$, $b=\epsilon_{0}^{2} T_{0}^{2}  {MS^{2}}/{{2c^{2}}}$ and $t_0=T_{0}$.} The result in \eqref{eqn_thm_cvg_rate_20} then follows as a direct consequence of Lemma \ref{lecce22}.

%
\medskip \noindent {\bf Proof of Theorem \ref{theo_convergence_rate}: }
Consider the result in \eqref{pedarsag} of Lemma \ref{helpful} and subtract the average function optimal value $F(\bbw^*)$ from both sides of the inequality to conclude that the sequence of optimality gaps in the RES algorithm satisfies
\begin{align}\label{taylor_objective}
 \E{F(\bbw_{t+1})\given \bbw_{t}} -\ F(\bbw^*)  
        \  \leq \ F(\bbw_{t}) -\ F(\bbw^*) 
          -  \frac{\epsilon_{t}}{C} \| \nabla F(\bbw_{t})\|^{2} 
          +  \frac{\epsilon_{t}^{2}MS^{2}}{{2c^{2}}}.
\end{align}
We proceed to find a lower bound for the gradient norm $\| \nabla F(\bbw_{t})\|$ in terms of the error of the objective value $F(\bbw_{t}) -\ F(\bbw^*) $ {-- this is a standard derivation which we include for completeness, see, e.g., \cite{Boyd}. As it follows from Assumption 1 the eigenvalues of the Hessian $\bbH(\bbw_{t})$ are bounded between $0<m$ and $M<\infty$ as stated in \eqref{bbb}. Taking a Taylor's expansion of the objective function $F(\bby)$ around $\bbw$ and using the lower bound in the Hessian eigenvalues we can write
\begin{equation}\label{taylor_lower_bound}
   F(\bby)\ \geq\ F(\bbw) +\nabla F(\bbw)^{T}(\bby-\bbw)   
   + {{m}\over{2}}\|{\bby - \bbw}\|^{2}.
\end{equation}
For fixed $\bbw$, the right hand side of \eqref{taylor_lower_bound} is a quadratic function of $\bby$ whose minimum argument we can find by setting its gradient to zero. Doing this yields the minimizing argument $\hby = \bbw- (1/m) \nabla  F(\bbw)$ implying that for all $\bby$ we must have
\begin{alignat}{2}\label{lower_bound_for_gradient}
F(\bby)\ \geq\ 
    &\ F(\bbw) +\nabla F(\bbw)^{T}(\hby-\bbw)   
   + {{m}\over{2}}\|{\hby - \bbw}\|^{2} \nonumber \\
   \ =\ 
         &\ F(\bbw) - \frac{1}{2m} \| \nabla F(\bbw)\|^{2} .
\end{alignat}
The bound in \eqref{lower_bound_for_gradient} is true for all $\bbw$ and $\bby$. In particular, for $\bby=\bbw^{*}$ and $\bbw=\bbw_{t}$ \eqref{lower_bound_for_gradient} yields
\begin{equation}\label{transition}
   F(\bbw^*)\ \geq \ F(\bbw_{t}) - \frac{1}{2m} \| \nabla F(\bbw_{t})\|^{2}.
\end{equation} 
Rearrange terms in \eqref{transition} to obtain a bound on the gradient norm squared $\| \nabla F(\bbw_{t})\|^{2}$. Further substitute the result in \eqref{taylor_objective} and regroup terms to obtain the bound
\begin{align}\label{simplified_taylor_expansion}
\E{F(\bbw_{t+1})\given \bbw_{t}} -\ F(\bbw^*)   
       \  \leq\ \left(1-\frac{2m\epsilon_{t}}{C} \right) \big{(}F(\bbw_{t}) -\ F(\bbw^*) \big{)} 
          +\frac{\epsilon_{t}^{2}MS^{2}}{{2c^{2}}}.
\end{align}
Take now expected values on both sides of \eqref{simplified_taylor_expansion}. The resulting double expectation in the left hand side simplifies to $\E{  \E {F(\bbw_{t+1})\given{\bbw_{t}}} } = \E {F(\bbw_{t+1})}$, which allow us to conclude that \eqref{simplified_taylor_expansion} implies that
\begin{align}\label{expectation_inequality}
 \E{F(\bbw_{t+1})} -\ F(\bbw^*)   
       \  \leq\ \left(1-\frac{2m\epsilon_{t}}{C} \right) \big{(} \E {F(\bbw_{t})} -\ F(\bbw^*) \big{)}
          +\frac{\epsilon_{t}^{2}MS^{2}}{{2c^{2}}}  .
\end{align}
Furhter substituting $\epsilon_{t} \!= \! \epsilon_{0}T_{0}/(T_{0}+t)$, which is the assumed form of the step size sequence by hypothesis, we can rewrite \eqref{expectation_inequality} as
\begin{align}\label{sass}
 \E{F(\bbw_{t+1})} -\ F(\bbw^*)  
         \ \leq\ \left(1- \frac{2m \epsilon_{0}T_{0} }{(T_{0}+t)C} \right) 
                     \Big( \E {F(\bbw_{t})} - F(\bbw^*) \Big)
             +\left(\frac{\epsilon_{0}T_{0}}{T_{0}+t}\right)^{2} \! \frac{MS^{2}}{{2c^{2}}}.
\end{align}
Given that the product $2m \epsilon_{0} T_{0} /C >1$ as per the hypothesis, the sequence $\E{F(\bbw_{t+1})} -\ F(\bbw^*)$ satisfies the hypotheses of Lemma \ref{lecce22} with {$a=2m\epsilon_{0}T_{0}/C$, $b=\epsilon_{0}^{2} T_{0}^{2}  {MS^{2}}/{{2c^{2}}}$.} It then follows from \eqref{lemma3_claim} and \eqref{convergence_parameter} that \eqref{eqn_thm_cvg_rate_20} is true for the $C_{0}$ constant defined in \eqref{eqn_thm_cvg_rate_30} upon identifying $u_t$ with $\E{F(\bbx_{t+1})} -\ F(\bbx^*)$, $C_{0}$ with $Q$, and substituting {$c=2m\epsilon_{0}T_{0}/C$, $b=\epsilon_{0}^{2} T_{0}^{2}  {MS^{2}}/{{2c^{2}}}$ and $t_0=T_{0}$} for their explicit values. \hfill \BlackBox

\bibliography{bmc_article}

\begin{thebibliography}{31}
\providecommand{\natexlab}[1]{#1}
\providecommand{\url}[1]{\texttt{#1}}
\expandafter\ifx\csname urlstyle\endcsname\relax
  \providecommand{\doi}[1]{doi: #1}\else
  \providecommand{\doi}{doi: \begingroup \urlstyle{rm}\Url}\fi

\bibitem[Birge et~al.(1995)Birge, Chen, Qi, and Wei]{Birge}
J.~R. Birge, X.~Chen, L.~Qi, and Z.~Wei.
\newblock A stochastic newton method for stochastic quadratic programs with
  resource.
\newblock \emph{Technical report}, University of Michigan, Ann Arbor, MI 1995.

\bibitem[Bordes et~al.(2009)Bordes, Bottou, and Gallinari]{Bordes}
A.~Bordes, L.~Bottou, and P.~Gallinari.
\newblock Sgd-qn: Careful quasi-newton stochastic gradient descent.
\newblock \emph{The Journal of Machine Learning Research}, 10:\penalty0
  1737--1754, 2009.

\bibitem[Boser et~al.(ACM, 1992)Boser, Guyon, and Vapnik]{BGV}
B.~E. Boser, I.~M. Guyon, and V.~N. Vapnik.
\newblock A training algorithm for optimal margin classifiers.
\newblock In \emph{Proceedings of the fifth annual workshop on Computational
  learning theory}, ACM, 1992.

\bibitem[Bottou(2010)]{Bottou}
L.~Bottou.
\newblock Large-scale machine learning with stochastic gradient descent.
\newblock \emph{In Proceedings of COMPSTAT'2010}, pages 177--186,
  Physica-Verlag HD, 2010.

\bibitem[Bottou and Cun(2005)]{BottouCun}
L.~Bottou and Y.~L. Cun.
\newblock On-line learning for very large datasets.
\newblock In \emph{Applied Stochastic Models in Business and Industry},
  volume~21. pp. 137-151, 2005.

\bibitem[Boyd and Vandenberghe(2004)]{Boyd}
S.~Boyd and L.~Vandenberghe.
\newblock \emph{Convex Optimization}.
\newblock Cambridge Univ. Press, Cambridge, U.K, 1 edition, 2004.

\bibitem[Broyden et~al.(1973)Broyden, Jr., Wang, and More]{Broyden}
C.~G. Broyden, J.~E.~Dennis Jr., Wang, and J.~J. More.
\newblock On the local and superlinear convergence of quasi-newton methods.
\newblock \emph{IMA J. Appl. Math}, 12\penalty0 (3):\penalty0 223--245, June
  1973.

\bibitem[Byrd et~al.(1987)Byrd, Nocedal, and Yuan]{Byrd}
R.~H. Byrd, J.~Nocedal, and Y.~Yuan.
\newblock Global convergence of a class of quasi-newton methods on convex
  problems.
\newblock \emph{SIAM J. Numer. Anal.}, 24\penalty0 (5):\penalty0 1171--1190,
  October 1987.

\bibitem[{Dong C.} and Nocedal(1989)]{DingNocedal}
L.~{Dong C.} and J.~Nocedal.
\newblock On the limited memory bfgs method for large scale optimization.
\newblock \emph{Mathematical programming}, \penalty0 (45(1-3)):\penalty0
  503--528, 1989.

\bibitem[Fletcher(2013)]{DFP}
R.~Fletcher.
\newblock Practical methods of optimizations.
\newblock John Wiley and Sons 2013.

\bibitem[J.~E.~Dennis and More(1974)]{Dennis}
Jr. J.~E.~Dennis and J.~J. More.
\newblock A characterization of super linear convergence and its application to
  quasi-newton methods.
\newblock \emph{Mathematics of computation}, 28\penalty0 (126):\penalty0
  549--560, 1974.

\bibitem[Konecny and Richtarik(2013)]{Konency}
J.~Konecny and P.~Richtarik.
\newblock Semi-stochstic gradient descent methods.
\newblock \emph{arXiv preprint arXiv}, 1312.1666, 2013.

\bibitem[LeRoux et~al.(2012)LeRoux, Schmidt, and Bach]{Roux}
N.~LeRoux, M.~Schmidt, and F.~Bach.
\newblock A stochastic gradient method with an exponential convergence rate for
  strongly-convex optimization with finite training sets.
\newblock \emph{arXiv preprint arXiv}, 1202.6258, 2012.

\bibitem[Li and Fukushima(2001)]{Li}
D.~H. Li and M.~Fukushima.
\newblock A modified bfgs method and its global convergence in nonconvex
  minimization.
\newblock \emph{Journal of Computational and Applied Mathematics}, 129\penalty0
  (1):\penalty0 15--35, 2001.

\bibitem[Mokhtari and Ribeiro(2014{\natexlab{a}})]{AryanAleTSP}
A.~Mokhtari and A.~Ribeiro.
\newblock Res: Regularized stochastic bfgs algorithm.
\newblock \emph{arXiv preprint arXiv}, 1401.7625, 2014{\natexlab{a}}.

\bibitem[Mokhtari and Ribeiro(2014{\natexlab{b}})]{cMokhtariRibeiro14}
A.~Mokhtari and A.~Ribeiro.
\newblock A quasi-newton method for large scale support vector machines.
\newblock In \emph{Proc. Int. Conf. Acoustics Speech Signal Process.}, volume
  (to appear). Florence Italy, May 4-9 2014{\natexlab{b}}.

\bibitem[Ng(2004)]{Ng}
A.~Y. Ng.
\newblock Feature selection, l 1 vs. l 2 regularization, and rotational
  invariance.
\newblock \emph{Proceedings of the twenty-first international conference on
  Machine learning}, page~78, 2004.

\bibitem[Nocedal and Wright(1999)]{Nocedal}
J.~Nocedal and S.~J. Wright.
\newblock \emph{Numerical optimization}.
\newblock Springer-Verlag, New York, NY, 2 edition, 1999.

\bibitem[Powell(1971)]{Powell}
M.~J.~D. Powell.
\newblock \emph{Some global convergence properties of a variable metric
  algorithm for minimization without exact line search}.
\newblock Academic Press, London, UK, 2 edition, 1971.

\bibitem[Schmidt et~al.(2013)Schmidt, Roux, and Bach]{Bach}
M.~Schmidt, N.~Le Roux, and F.~Bach.
\newblock Minimizing finite sums with the stochastic average gradient.
\newblock \emph{arXiv preprint arXiv}, 1309.2388, 2013.

\bibitem[Schraudolph et~al.(2007)Schraudolph, Yu, and GŸnter]{Schraudolph}
N.~N. Schraudolph, J.~Yu, and S.~GŸnter.
\newblock A stochastic quasi-newton method for online convex optimization.
\newblock \emph{In Proc. 11th Intl. Conf. on Artificial Intelligence and
  Statistics (AIstats)}, page 433Ð 440, Soc. for Artificial Intelligence and
  Statistics, 2007.

\bibitem[Shalev-Shwartz and Srebro(2008)]{SS}
S.~Shalev-Shwartz and N.~Srebro.
\newblock Svm optimization: inverse dependence on training set size.
\newblock In \emph{In Proceedings of the 25th international conference on
  Machine learning}. pp. 928-935, ACM 2008.

\bibitem[Shalev-Shwartz et~al.(2007)Shalev-Shwartz, Singer, and
  Srebro]{Shwartz}
S.~Shalev-Shwartz, Y.~Singer, and N.~Srebro.
\newblock Pegasos: Primal estimated sub-gradient solver for svm.
\newblock \emph{In Proceedings of the 24th international conference on Machine
  learning}, pages 807--814, ACM, 2007.

\bibitem[Solo and Kong(1995)]{Solo}
V.~Solo and X.~Kong.
\newblock \emph{Adaptive Signal Processing Algorithms: Stability and
  Performance}.
\newblock NJ: Prentice-Hall, Englewood Cliffs, 1995.

\bibitem[Sun(2012)]{Soso}
G.~Sun.
\newblock Kdd cup track 2 soso.com ads prediction challenge, 2012.
\newblock Accessed August 1 2012.

\bibitem[Syski(1983)]{Ruszczynski}
A.~Ruszczynskiand~W. Syski.
\newblock Stochastic approximation method with gradient averaging for
  unconstrained problems.
\newblock \emph{IEEE Transactions on Automatic Control}, 28\penalty0
  (12):\penalty0 1097--1105, 1983.

\bibitem[Vapnik(1999)]{Vapnik}
V.~Vapnik.
\newblock \emph{The nature of statistical learning theory}.
\newblock springer, 2 edition, 1999.

\bibitem[Zargham et~al.(2013)Zargham, Ribeiro, and Jadbabaie]{ZarghamEtal14}
M.~Zargham, A.~Ribeiro, and A.~Jadbabaie.
\newblock Accelerated backpressure algorithm.
\newblock \emph{arXiv preprint arXiv}, 1302.1475, 2013.

\bibitem[Zhang et~al.(2013{\natexlab{a}})Zhang, Mahdavi, and Jin]{Mahdavi}
L.~Zhang, M.~Mahdavi, and R.~Jin.
\newblock Linear convergence with condition number independent access of full
  gradients.
\newblock \emph{In Advances in Neural Information Processing Systems}, pages
  980--988, 2013{\natexlab{a}}.

\bibitem[Zhang(2004)]{Zhang}
T.~Zhang.
\newblock Solving large scale linear prediction problems using stochastic
  gradient descent algorithms.
\newblock \emph{In Proceedings of the twenty-first international conference on
  Machine learning}, page 919Ð926, ACM, 2004.

\bibitem[Zhang et~al.(2013{\natexlab{b}})Zhang, Duchi, and
  Wainwright]{ZhangDuchi}
Y.~Zhang, J.~C. Duchi, and M.~J. Wainwright.
\newblock Communication-efficient algorithms for statistical optimization.
\newblock \emph{The Journal of Machine Learning Research 14.1}, pages
  3321--3363, 2013{\natexlab{b}}.

\end{thebibliography}

\end{document}